\newtheorem{theorem}{Theorem}[section]
\newtheorem{lemma}[theorem]{Lemma}
\newtheorem{corollary}[theorem]{Corollary}
\newtheorem{remark}[theorem]{Remark}
\newtheorem{defn}[theorem]{Definition}
\begin{document}
\setcounter{page}{1}
\title{Geometry and topology of manifolds with
integral radial curvature bounds}
\author{Jing Mao}
\date{\emph{In memory of my father Mr. Xu-Gui Mao}}
\protect\footnotetext{\!\!\!\!\!\!\!\!\!\!\!\!{MSC 2010:} Primary
53C20.
\\
{Key Words:} Comparison theorems; Integral radial Ricci curvature;
Integral radial sectional curvature; Spherically symmetric
manifolds.}
\maketitle ~~~\\[-15mm]
\begin{center}{\footnotesize
Faculty of Mathematics and Statistics, \\
Key Laboratory of Applied Mathematics of Hubei Province, \\
Hubei University, Wuhan 430062, China}
\end{center}

\begin{abstract}
In this paper, we systematically investigate the geometry and
topology of manifolds with integral \emph{radial} curvature bounds,
and obtain many interesting and important conclusions.
\end{abstract}

\markright{\sl\hfill  J. Mao\hfill}

\section{Introduction}
\renewcommand{\thesection}{\arabic{section}}
\renewcommand{\theequation}{\thesection.\arabic{equation}}
\setcounter{equation}{0} \setcounter{maintheorem}{0}

As we know, \emph{curvature} is a fundamental concept in
Differential Geometry, and through which, we can describe
\emph{accurately} the differences between two manifolds with
different curvatures. There are so many examples to support this
viewpoint and readers do not need to spend even one second on
finding such an example. An intuitive example appearing in readers'
mind automatically might be the classical Bishop's volume comparison
theorems, which tell us that for a disk in the Euclidean $2$-space
$\mathbb{R}^2$, one can increase (or decrease) its volume (i.e.,
$2$-dimensional Hausdorff measure) by decreasing (or increasing) its
zero Gaussian curvature. This change of Gaussian curvature naturally
leads to the change of shape of the Euclidean $2$-disk. Speaking in
other more accurate words, the geometry (or topology) of the
Euclidean $2$-disk changes during this increasing (or decreasing)
process of the volume. Of course, the statement of the Bishop's
volume comparisons (which also works for higher dimension cases) is
more complicated, but this interesting application already reveals
the importance of curvature.

In this paper, we introduce a concept named ``\emph{integral radial
(Ricci or sectional) curvatures}", which in essence is the
$L^{p}$-norm of the part of radial (Ricci or sectional) curvature
not greater (or not less) than a prescribed continuous function of
the Riemannian distance parameter - see Definitions \ref{def1-1} and
\ref{def1-2}, Remark \ref{remark2-1} for details.

 It is well-known
that on any surface $\mathcal{S}^2$, the Gauss-Bonnet formula says
that the Euler characteristic number $\chi(\mathcal{S}^2)$ is given
by $ \chi(\mathcal{S}^2)=\int_{\mathcal{S}^2}K(g)dv $ for any metric
$g$ on $\mathcal{S}^2$, with $K(g)$ the Gaussian curvature of
$\mathcal{S}^2$. For the case of higher dimensions (i.e., for any
$n$-dimensional manifold $\mathcal{S}^n$), the Chern-Weil formulae
for characteristic classes are given by the integral of some
polynomial of degree $\frac{n}{2}$ in the curvature. These two facts
tell us that one might estimate topological invariants on a
prescribed manifold by using the average of curvatures of any metric
on this manifold. This is exactly our motivation of investigating
integral norms of curvatures.

We systematically investigate the geometry and topology of manifolds
with integral radial curvature bounds, and fortunately, we can
obtain the followings:

\begin{itemize}

\item  For a given complete $n$-Riemannian ($n\geq2$) manifold $M$, in Section \ref{VC}, several upper bounds, involving integral
radial (Ricci or sectional) curvatures, for the volumes of geodesic
balls, geodesic cones, normal tubes (around a prescribed
submanifold) on $M$ have been shown - see Theorems \ref{theorem2-1},
\ref{theorem3-8},  \ref{theorem3-9},  \ref{theorem3-10},
\ref{theorem3-13} and \ref{theorem3-15} for details. It is not hard
to see that these upper bound estimates cover those shown
respectively by S. Gallot, P. Peterson, S.-D. Shteingold, G.-F. Wei,
D. Yang in  \cite[Theorems 1 and 2]{sg}, \cite[Theorems 2.4 and
2.5]{pw11}, \cite[Theorem1.1]{pw1} and
 \cite[Theorem 7.1]{yd1} as special
cases. Besides, these volume estimates can also give some
byproducts. For instance, as explained clearly in (3) of Remark
\ref{remark3-2}, if the Type-I integral radial Ricci curvature (see
Definition \ref{def1-1}) vanishes identically, then Theorem
\ref{theorem2-1} directly gives the Bishop-Gromov type relative
volume comparison estimate and the Bishop-type volume comparison
(proven in \cite{fmi,m1,m3}) for manifolds having a radial Ricci
curvature lower bound.

\item Applying our volume estimate for geodesic cones (see Theorem
\ref{theorem3-10}), if the Type-I integral radial Ricci curvature
was assumed to be bounded from above, we can give lower bounds for
the local isoperimetric constant and the local Sobolev constant of
geodesic balls -- see Theorem \ref{theorem4-1} and (1) of Remark
\ref{remark4-2} for details. This fact is a generalization of D.
Yang's lower bound estimate given in \cite[Theorem 7.4]{yd1}. By
defining an isoperimetric quantity $\mathrm{Is}(p)$ (see
(\ref{IQ-D})) and applying  \cite[Lemma 4]{sg}, our volume estimate
for normal tubes around hypersurfaces (see Theorem
\ref{theorem3-13}), an interesting isoperimetric inequality can be
obtained (see Theorem \ref{theorem4-4} for details), which can be
seen as an extension of S. Gallot's result \cite[Theorem 3]{sg}. By
mainly using Theorem \ref{theorem3-15} and the variational
principle, we can give a sharp upper bound for the infimum of the
spectrum of the Laplacian $\Delta$ on complete noncompact manifolds
(see Theorem \ref{theorem4-5}), which improves H. Donnelly's and S.
Gallot's estimates shown separately in \cite{hdo,sg}. A nice sharp
upper bound can also be given for the infimum of the spectrum of the
nonlinear $\flat$-Laplacian $\Delta_{\flat}$, $1<\flat<\infty$, on
complete noncompact manifolds -- see Corollary \ref{corollary4-7}
for details.

\item As a direct consequence of our volume doubling result (see Corollary
\ref{corollary3-6}), if the average of the Type-I integral radial
Ricci curvature (see also Definition \ref{def1-1}) is sufficiently
small and $\ell(q)$, defined by (\ref{key-def1}), has an upper
bound, an interesting compactness conclusion for a collection of
closed Riemannian $n$-manifolds ($n\geq2$) can be obtained -- see
Corollary \ref{corollary5-1} for details. This result is an
extension of \cite[Corollary 1.3]{pw1} given by P. Peterson and
G.-F. Wei. Besides, if the Type-II integral radial sectional
curvature (see Definition \ref{def1-2}) on a given closed manifold
is sufficiently small, then using the volume estimate for normal
tubes around geodesics (see Theorem \ref{theorem3-9}), we can give a
positive lower bound for the length of the shortest closed geodesic
on this closed manifold (see Theorem \ref{theorem5-2} for details),
which generalizes J. Cheeger's related conclusion in \cite{c1}.
Finally, if the Type-I integral radial Ricci curvature was assumed
to be bounded from above, then using our volume comparison (see
Theorem \ref{theorem2-1}),
 a Buser-type isoperimetric inequality can be
obtained, which \emph{partially} extends P. Buser's classical result
in \cite{pb} and S.-H. Paeng's conclusion \cite[Theorem 1.2]{shp} a
lot.

\end{itemize}

At the end of this paper, we also issue open problems, which are
worth investigating in the coming future.

\section{Preliminaries} \label{S2-P}
\renewcommand{\thesection}{\arabic{section}}
\renewcommand{\theequation}{\thesection.\arabic{equation}}
\setcounter{equation}{0} \setcounter{maintheorem}{0}

In this section, we would like to give the concept, \emph{integral
radial (Ricci or sectional) curvatures}, in detail. However, in
order to state clearly,  first we prefer to give some preliminaries,
which have been introduced by the author in some of his previous
articles (see, e.g., \cite{fmi,m1,m2,m3,mdw,ywmd}).

Let $(M,g)$ be a  complete Riemannian $n$-manifold ($n\geq2$) with
the metric $g$, and $\nabla$ be the gradient operator. For a point
$q\in M$, one can set up a geodesic polar coordinates $(t,\xi)$
around this point $q$, where $\xi\in{S}_{q}^{n-1}\subseteq{T_{q}M}$
is a unit vector of the unit sphere $S_{q}^{n-1}$ with center $q$ in
the tangent space $T_{q}M$. Let $\mathcal{D}_{q}$ and $d_{\xi}$ be
defined by
\begin{eqnarray*}
\mathcal{D}_{q}=\{t\xi|~0\leq{t}<d_{\xi},~\xi\in{S^{n-1}_{q}}\},
\end{eqnarray*}
and
\begin{eqnarray*}
d_{\xi}=d_{\xi}(q):=  \sup\{t>0|~\gamma_{\xi}(s):= \exp_q(s\xi)~
{\rm{is~ the~ unique~ minimal~ geodesic ~joining} }~ q ~{\rm{and}}
~\gamma_{\xi}(t)\}
\end{eqnarray*}
respectively. Then $\exp_q:\mathcal{D}_q \to M\backslash Cut(q)$
gives a diffeomorphism from $\mathcal{D}_q$ onto the open set
$M\backslash Cut(q)$, with $Cut(q)$ the cut locus of  $q$. For
$\zeta\in{\xi^{\bot}}$, one can define the path of linear
transformations $\mathbb{A}(t,\xi):\xi^\perp\rightarrow{\xi^\perp}$
as follows
 \begin{eqnarray*}
\mathbb{A}(t,\xi)\zeta=(\tau_{t})^{-1}Y(t),
 \end{eqnarray*}
with $\xi^\perp$ the orthogonal complement of $\{\mathbb{R}\xi\}$ in
$T_{q}M$, where $\tau_{t}:T_{q}M\rightarrow{T_{\exp_{q}(t\xi)}M}$ is
the parallel translation along the geodesic $\gamma_{\xi}(t)$ with
$\gamma'(0)=\xi$, and $Y(t)$ is the Jacobi field along
$\gamma_{\xi}$ satisfying $Y(0)=0$, $(\triangledown_{t}Y)(0)=\zeta$.
Set
\begin{eqnarray*}
\mathcal{R}(t)\zeta=(\tau_{t})^{-1}R(\gamma'_{\xi}(t),\tau_{t}\zeta)
\gamma'_{\xi}(t),
\end{eqnarray*}
where the curvature tensor $R(X,Y)Z$ is defined by
$R(X,Y)Z=-[\nabla_{X},$ $ \nabla_{Y}]Z+ \nabla_{[X,Y]}Z$. Then
$\mathcal{R}(t)$ is a self-adjoint operator on $\xi^{\bot}$, whose
trace is the radial Ricci tensor
$$\mathrm{Ric}_{\gamma_{\xi}(t)}(\gamma'_{\xi}(t),\gamma'_{\xi}(t)).$$
Clearly, the map $\mathbb{A}(t,\xi)$ satisfies the Jacobi equation
 $\mathbb{A}''+\mathcal{R}\mathbb{A}=0$ with initial conditions
 $\mathbb{A}(0,\xi)=0$, $\mathbb{A}'(0,\xi)=I$, and  by Gauss's lemma, the Riemannian metric of $M$ can be
 expressed by
 \begin{eqnarray} \label{2.1}
 ds^{2}(\exp_{q}(t\xi))=dt^{2}+\|\mathbb{A}(t,\xi)d\xi\|^{2}
 \end{eqnarray}
on the set $\exp_{q}(\mathcal{D}_q)$. Consider the metric components
$g_{ij}(t,\xi)$, $i,j\geq 1$, in a coordinate system $\{t, \xi_a\}$
formed by fixing  an orthonormal basis $\{\zeta_a, a\geq 2\}$ of
 $\xi^{\bot}=T_{\xi}S^{n-1}_q$, and extending it to a local frame $\{\xi_a, a\geq2\}$ of
$S_q^{n-1}$. On $\mathcal{D}_{q}$, one can define a function $J>0$
as follows
\begin{eqnarray} \label{function-J}
J^{n-1}(t,\xi)=\det\mathbb{A}(t,\xi)=\sqrt{\|g\|}:=\sqrt{\det[g_{ij}]}
\end{eqnarray}
 Since
 $\tau_t: S_q^{n-1}\to S_{\gamma_{\xi}(t)}^{n-1}$ is an
isometry, we have
$$
g\left( d(\exp_q)_{t\xi}(t\zeta_{a}),
d(\exp_q)_{t\xi}(t\zeta_{b})\right)=g\left(
\mathbb{A}(t,\xi)(\zeta_{a}), \mathbb{A}(t,\xi)(\zeta_{b})\right),
$$
and
$$ \sqrt{\|g\|}=\det\mathbb{A}(t,\xi).$$
So, by (\ref{2.1}), the volume $\mathrm{vol}(B(q,r))$ of the
geodesic ball $B(q,r)$ on $M$ is given by
\begin{eqnarray} \label{2.2}
\mathrm{vol}(B(q,r))=\int_{S_{q}^{n-1}}\int_{0}^{\min\{r,d_{\xi}\}}\sqrt{\|g\|}dtd\sigma=\int_{S_{q}^{n-1}}\left(\int_{0}^{\min\{r,d_{\xi}\}}\det(\mathbb{A}(t,\xi))dt\right)d\sigma,
\end{eqnarray}
where $d\sigma$ stands for the $(n-1)$-dimensional volume element on
$\mathbb{S}^{n-1}\equiv S_{q}^{n-1}\subseteq{T_{q}M}$. \emph{In the
sequel, we make an agreement that $\mathrm{vol}(\cdot)$ denotes the
volume of the prescribed geometric object under the related
Hausdorff measure}. Let
 \begin{eqnarray} \label{inj-R}
\mathrm{inj}(q):=d_{M}(q,Cut(q))=\min\limits_{\xi\in S_{q}M}d_{\xi}
 \end{eqnarray}
 be the injectivity radius at $q$.
 In general, we have
$B(q,\mathrm{inj}(q))\subseteq{M}\backslash{Cut(q)}$. Besides, for
$r<\mathrm{inj}(q)$, by (\ref{2.2}) we can obtain
\begin{eqnarray*}
\mathrm{vol}(B(q,r))=\int_{0}^{r}\int_{S_{q}^{n-1}}\det(\mathbb{A}(t,\xi))d\sigma{dt}.
\end{eqnarray*}
Denote by $\widehat{r}(x)=d_{M}(q,x)$ the intrinsic distance to the
point $q\in{M}$. Then, by the definition of a non-zero tangent
vector ``\emph{radial}" to a prescribed point on a manifold given in
the first page of \cite{KK}, we know that for
$x\in{M}\backslash(Cut(q)\cup{q})$ the unit vector field
\begin{eqnarray*}
v_{x}:=\nabla{\widehat{r}(x)}
\end{eqnarray*}
is the radial unit tangent vector at $x$. This is because for any
$\xi\in{S}_{q}^{n-1}$ and $t_{0}>0$, we have
$\nabla{\widehat{r}}{(\gamma_{\xi}(t_{0}))}=\gamma'_{\xi}(t_{0})$
when the point $\gamma_{\xi}(t_{0})=\exp_{q}(t_{0}\xi)$ is away from
the cut locus of $q$.

Set
\begin{eqnarray} \label{key-def1}
\ell(q):=\sup\limits_{x\in M}\widehat{r}(x)=\max\limits_{\xi\in
S_{q}M}d_{\xi}.
\end{eqnarray}
Clearly, $\ell(q)\geq\mathrm{inj}(q)$. By \cite[Proposition 39 on
page 266]{p}, one has
\begin{eqnarray*}
\partial_{\widehat{r}}\Delta{\widehat{r}}+\frac{(\Delta{\widehat{r}})^2}{n-1}\leq\partial_{\widehat{r}}\Delta{\widehat{r}}+|{\rm{Hess}}\widehat{r}|^{2}=-{\rm{Ric}}(\partial_{\widehat{r}},\partial_{\widehat{r}}),
\qquad {\rm{with}}~~\Delta{\widehat{r}}=\partial_{\widehat{r}}\ln(
\sqrt{\|g\|}),
\end{eqnarray*}
 with $\partial_{\widehat{r}}=\nabla{\widehat{r}}$ as a differentiable vector (cf.
\cite[Proposition 7 on page 47]{p} for the differentiation of
$\partial_{\widehat{r}}$), where $\rm{Hess}$, $\Delta$ are the
Hessian and the Laplace operators on $M$, respectively. Then,
together with (\ref{function-J}), we have
\begin{eqnarray} \label{FJ-1}
J''+\frac{1}{n-1}\mathrm{Ric}(\gamma_{\xi}'(t),\gamma_{\xi}'(t))J\leq0,
\end{eqnarray}
\begin{eqnarray} \label{FJ-2}
J(0,\xi)=0,\qquad J'(0,\xi)=1.
\end{eqnarray}

We need the following notion.

\begin{defn}  \label{def1-1}
Given a Riemannian manifold $M$ and a continuous function
$\lambda:[0,l)\rightarrow\mathbb{R}$, for a point $q\in M$, set
\begin{eqnarray}  \label{def1-1-F1}
\rho(q,x):=\left|\min\left\{0,\mathrm{Ric}(v_{x},v_{x})-(n-1)\lambda(\widehat{r}(x))\right\}\right|
\end{eqnarray}
 and
\begin{eqnarray} \label{def1-1-F2}
\widetilde{\rho}(q,x):=\left|\max\left\{0,\mathrm{Ric}(v_{x},v_{x})-(n-1)\lambda(\widehat{r}(x))\right\}\right|,
\end{eqnarray}
where $x\in M\backslash\left(Cut(q)\cup\{q\}\right)$, and, as
before, $\mathrm{Ric}$ denotes the Ricci curvature tensor on $M$,
$t=\widehat{r}(x)=d_{M}(q,x)$ denotes the Riemannian distance, on
$M$, from the point $q$ to $x$. We call two quantities
\begin{eqnarray*}
&&\quad k_{-}\left(p,q,\lambda(t),R\right)=\left(\int_{B(q,R)}\rho^{p}dv\right)^{1/p},\\
&&\overline{k_{-}}\left(p,q,\lambda(t),R\right)=\left(\frac{1}{\mathrm{vol}(B(q,R))}\int_{B(q,R)}\rho^{p}dv\right)^{1/p}
\end{eqnarray*}
as Type-I integral radial Ricci curvature (w.r.t. $q$) and its
average, respectively. Besides, we call two quantities
\begin{eqnarray*}
&&\quad
k^{\ast}_{-}\left(p,q,\lambda(t),R\right)=\left(\int_{B(q,R)}\widetilde{\rho}^{p}dv\right)^{1/p},
\\
&&\overline{k^{\ast}_{-}}\left(p,q,\lambda(t),R\right)=\left(\frac{1}{\mathrm{vol}(B(q,R))}\int_{B(q,R)}\widetilde{\rho}^{p}dv\right)^{1/p}
\end{eqnarray*}
as Type-II integral radial Ricci curvature (w.r.t. $q$) and its
average, respectively.
\end{defn}

Similarly, we have

\begin{defn}  \label{def1-2}
Given a Riemannian manifold $M$ and a continuous function
$\lambda:[0,l)\rightarrow\mathbb{R}$, for a point $q\in M$, set
\begin{eqnarray} \label{def1-2-F1}
\mu(q,x)=\left|\max\left\{0,K(V,v_{x})-\lambda(\widehat{r}(x))\right\}\right|
\end{eqnarray}
and
\begin{eqnarray} \label{def1-2-F2}
\widetilde{\mu}(q,x)=\left|\min\left\{0,K(V,v_{x})-\lambda(\widehat{r}(x))\right\}\right|,
\end{eqnarray}
where $x\in M\backslash\left(Cut(q)\cup\{q\}\right)$, $V\perp
v_{x}$, $V\in S_{x}^{n-1}\subseteq T_{x}M$, $K(V,v_{x})$ denotes the
sectional curvature of the plane spanned by $v_{x}$ and $V$, and, as
before, $t=\widehat{r}(x)=d_{M}(q,x)$ denotes the Riemannian
distance, on $M$, from the point $q$ to $x$. We call two quantities
\begin{eqnarray*}
&& \quad
k_{+}\left(p,q,\lambda(t),R\right)=\left(\int_{B(q,R)}\mu^{p}dv\right)^{1/p},
\\
&&\overline{k_{+}}\left(p,q,\lambda(t),R\right)=\left(\frac{1}{\mathrm{vol}(B(q,R))}\int_{B(q,R)}\mu^{p}dv\right)^{1/p}
\end{eqnarray*}
as Type-I integral radial sectional curvature (w.r.t. $q$) and its
average, respectively.  Besides, we call two quantities
\begin{eqnarray*}
&&\quad
k_{+}^{\ast}\left(p,q,\lambda(t),R\right)=\left(\int_{B(q,R)}\widetilde{\mu}^{p}dv\right)^{1/p},
\\
&&\overline{k_{+}^{\ast}}\left(p,q,\lambda(t),R\right)=\left(\frac{1}{\mathrm{vol}(B(q,R))}\int_{B(q,R)}\widetilde{\mu}^{p}dv\right)^{1/p}
\end{eqnarray*}
as Type-II integral radial sectional curvature (w.r.t. $q$) and its
average, respectively.
\end{defn}

\begin{remark} \label{remark2-1}
\rm{ (1) Clearly, if $k_{-}\left(p,q,\lambda(t),R\right)=0$ (resp.,
$k_{-}^{\ast}\left(p,q,\lambda(t),R\right)=0$), then
\begin{eqnarray}\label{rem1}
\mathrm{Ric}(v_{x},v_{x})\geq(n-1)\lambda(\widehat{r}(x)) \qquad
(\mathrm{resp.},~ ``\leq")
\end{eqnarray}
for $x\in B(q,R)\backslash\{q\}$ and $0<t<R$. This inequality can
also be rewritten as
$\mathrm{Ric}(\frac{d}{dt},\frac{d}{dt})\geq(n-1)\lambda(t)$ (resp.,
``$\leq$") for $0<t<R$, since
$\frac{d}{dt}|_{x}=\nabla\widehat{r}(x)=v_{x}$ is the radial unit
vector at $x$. We say that on $B(q,R)$, $M$ has a radial Ricci
curvature lower bound (resp., upper bound) $(n-1)\lambda(t)$ w.r.t.
$q$ if (\ref{rem1}) is satisfied. Similarly, if
$k_{+}\left(p,q,\lambda(t),R\right)=0$ (resp.,
$k_{+}^{\ast}\left(p,q,\lambda(t),R\right)=0$), then
 \begin{eqnarray} \label{rem2}
K(v_{x},V)=K(\frac{d}{dt},V)\leq\lambda(t) \qquad (\mathrm{resp.},~
``\geq")
 \end{eqnarray}
for $x\in B(q,R)\backslash\{q\}$ and $0<t<R$. We say that on
$B(q,R)$,  $M$ has a radial sectional curvature upper bound (resp.,
lower bound) $\lambda(t)$ w.r.t. $q$ provided (\ref{rem2}) is
satisfied. The notion that a manifold has radial (Ricci or
sectional) curvature bound w.r.t. a prescribed point has been
introduced in some literatures (see, e.g., \cite{fmi,m1,m2,m3}). For
a complete Riemannian manifold and a chosen point onside, one can
always find \emph{optimal} bounds for radial curvatures, which are
continuous functions of the Riemannian distance parameter. For this
fact, see \cite[expressions
(2.9), (2.10) on page 706]{fmi}.\\
 (2) Let $N\subset M$ be a $k$-dimensional submanifold of a
given Riemannian $n$-manifold. This submanifold does not have to be
closed or complete. One can define the \emph{normal tube of radius
$R$ around $N$} as follows
\begin{eqnarray} \label{rem-EXTRA}
\mathcal{T}(N,R):=\{x\in M|x=\exp_{N}(tv),~\mathrm{where}~0\leq
t<R,~v\in\upsilon(N),~\|v\|=1\},
\end{eqnarray}
where $\upsilon(N)$ is the normal bundle of $N$ in $M$ consisting of
vectors perpendicular to $N$, and $\exp_{N}:\upsilon(N)\rightarrow
M$ is the normal exponential map. For any point $q\in N$,
(\ref{def1-1-F1})-(\ref{def1-2-F2}) can be defined
similarly\footnote{ Of course, here we can define functions
$\mu(\widehat{r}(x))$, $\widetilde{\mu}(\widehat{r}(x))$,
$\rho(\widehat{r}(x))$ and $\widetilde{\rho}(\widehat{r}(x))$
similarly. However, in this setting, the point
$x\in\mathcal{T}(N,R)$ should be determined by $x=\exp_{N}(tv_0)$ or
$x=\exp_{N}(-t v_0)$ for some $v_{0}\in \upsilon(N)$. Speaking in
other words, for the case that $N\subset M$ is a geodesic,
$\widehat{r}(x)$ should be $\widehat{r}(x)=t=d_{M}(N,x)$, where
naturally $d_{M}(N,x)$ stands for the Riemannian distance $t$ from
the point $x$ to $N$.}, and then as Definition \ref{def1-1}, one can
define two quantities
\begin{eqnarray*}
&& \quad
k_{+}\left(p,N,\lambda(t),R\right)=\left(\int_{\mathcal{T}(N,R)}\mu^{p}dv\right)^{1/p},
\\
&&\overline{k_{+}}\left(p,N,\lambda(t),R\right)=\left(\frac{1}{\mathrm{vol}(\mathcal{T}(N,R))}\int_{\mathcal{T}(N,R)}\mu^{p}dv\right)^{1/p},
\end{eqnarray*}
which are called as \emph{Type-I integral radial sectional curvature
(w.r.t. $N$) and its average}, respectively. Other six quantities
$k_{+}^{\ast}\left(p,N,\lambda(t),R\right)$,
$\overline{k_{+}^{\ast}}\left(p,N,\lambda(t),R\right)$,
$k_{-}\left(p,N,\lambda(t),R\right)$,
$\overline{k_{-}}\left(p,N,\lambda(t),R\right)$,
$k_{-}^{\ast}\left(p,N,\lambda(t),R\right)$,
$\overline{k_{-}^{\ast}}\left(p,N,\lambda(t),R\right)$ can be
well-defined similarly. Clearly, when $N$ degenerates into a single
point, then these eight quantities here become exactly those ones
defined in
Definitions \ref{def1-1} and \ref{def1-2}. \\
 (3)  For a given
Riemannian manifold $M$, one can define two functions
$h_{1},h_{2}:M\rightarrow[0,\infty)$ as follows: $h_{1}(z)=$the
smallest eigenvalue for $\mathrm{Ric}: T_{z}M\rightarrow T_{z}M$,
$h_{2}(z)=$the smallest sectional curvature of a plane in $T_{z}M$.
Assume now that in Definitions \ref{def1-1} and \ref{def1-2},
$\lambda(t)$ degenerates into a constant $\lambda$, i.e.,
$\lambda(t)\equiv\lambda$. Then it is not hard to know
\begin{eqnarray} \label{rem3}
\quad k_{-}\left(p,q,\lambda,R\right)\leq\sup\limits_{x\in
M}\left(\int_{B(x,R)}|\min\{0,h_{1}(z)-(n-1)\lambda\}|^{p}dv\right)^{1/p},
\end{eqnarray}
\begin{eqnarray} \label{rem4}
\overline{k_{-}}\left(p,q,\lambda,R\right)\leq\sup\limits_{x\in
M}\left(\frac{1}{\mathrm{vol}(B(x,R))}\cdot\int_{B(x,R)}|\min\{0,h_{1}(z)-(n-1)\lambda\}|^{p}dv\right)^{1/p}
\end{eqnarray}
and
\begin{eqnarray} \label{rem5}
k_{+}^{\ast}\left(p,q,\lambda,R\right)\leq\sup\limits_{x\in
M}\left(\int_{B(x,R)}|\min\left\{0,h_{2}(z)-\lambda\right\}|^{p}dv\right)^{1/p},
\end{eqnarray}
\begin{eqnarray} \label{rem6}
\overline{k_{+}^{\ast}}\left(p,q,\lambda,R\right)\leq\sup\limits_{x\in
M}\left(\frac{1}{\mathrm{vol}(B(x,R))}\cdot\int_{B(x,R)}|\min\left\{0,h_{2}(z)-\lambda\right\}|^{p}dv\right)^{1/p}.
\end{eqnarray}
Similar conclusions can be obtained for other four quantities in
Definitions \ref{def1-1}, \ref{def1-2} and eight quantities defined
in (2) of Remark \ref{remark2-1}. \\
(4) In fact, the function $\lambda$ given in Definitions
\ref{def1-1} and \ref{def1-2} can be extended to a continuous
function on $M$. However, for accuracy, one needs to require $x\in
M\backslash\left(Cut(q)\cup\{q\}\right)$, since, as explained
before, only in this situation, there exists a unique minimizing
geodesic joining $q$ and $x$, which leads to the uniqueness of the
unit radial vector $v_{x}$ at $x$. If $x$ locates in the cut-locus
$Cut(q)$, one cannot construct $v_{x}$ at $x$ as before any more,
which leads to the consequence that the expressions
(\ref{def1-1-F1})-(\ref{def1-2-F2}) would not be correct any more.
Similar extension can be done for the case that $N\subset M$ is a
 submanifold of the given Riemannian manifold $M$.}
\end{remark}

We also need the following notion of spherically symmetric
manifolds.

\begin{defn} (see, e.g., \cite{fmi}) \label{defsp}
A domain
$\Omega=\exp_{q}([0,l)\times{S_{q}^{n-1}})\subset{M}\backslash
Cut(q)$, with $l<\mathrm{inj}(q)$, is said to be spherically
symmetric with respect to a point $q\in\Omega$, if the matrix
$\mathbb{A}(t,\xi)$ satisfies $\mathbb{A}(t,\xi)=f(t)I$, for a
function $f\in{C^{2}([0,l])}$, $l\in(0,\infty]$ with $f(0)=0$,
$f'(0)=1$, $f|_{(0,l)}>0$.
\end{defn}

By (\ref{2.1}), on the set
 $\Omega$ given in Definition \ref{defsp} the Riemannian metric of $M$ can be
 expressed by
 \begin{eqnarray} \label{2.3}
 ds^{2}(\exp_{q}(t\xi))=dt^{2}+f^{2}(t)\|d\xi\|^{2}, \qquad
 \xi\in{S_{q}^{n-1}}, \quad 0\leq{t}<l,
 \end{eqnarray}
 with $\|d\xi\|^{2}$ the round metric on the unit sphere $\mathbb{S}^{n-1}\subseteq\mathbb{R}^{n}$.
 Spherically symmetric manifolds were named as generalized space
forms
 by Katz and Kondo \cite{KK}, and a standard model for
such manifolds is given by the quotient manifold of the warped
product $[0,l)\times_{f} \mathbb{S}^{n-1}$ equipped with the metric
(\ref{2.3}),  and all pairs $(0,\xi)$ are identified with a single
point $q$, where $f$ satisfies the conditions in Definition
\ref{defsp}, and is called \emph{the warping function}. In this
setting, for $r<l$, one has
 \begin{eqnarray*}
 \mathrm{vol}(B(q,r))=w_{n}\int_{0}^{r}f^{n-1}(t)dt,
 \end{eqnarray*}
and, by the co-area formula, the area of the geodesic sphere
$\partial B(q,r)$ is
\begin{eqnarray*}
\mathrm{Area}(\partial
B(q,r))=\frac{d}{dr}\mathrm{vol}(B(q,r))=w_{n}f^{n-1}(r),
\end{eqnarray*}
where $w_n$ denotes the $(n-1)$-dimensional Hausdorff measure of
$\mathbb{S}^{n-1}\subset\mathbb{R}^n$. A space form with constant
curvature $k$ is also a spherically symmetric manifold, and in this
special case we have
\begin{eqnarray*}
f(t)=\left\{
\begin{array}{llll}
\frac{\sin\sqrt{k}t}{\sqrt{k}}, & \quad  l= \frac{\pi}{\sqrt{k}}
  & \quad k>0,\\
 t, &\quad l=+\infty & \quad k=0, \\
\frac{\sinh\sqrt{-k}t}{\sqrt{-k}}, & \quad l=+\infty  &\quad k<0.
\end{array}
\right.
\end{eqnarray*}

\section{Volume comparisons for manifolds with integral radial curvature
bounds} \label{VC}
\renewcommand{\thesection}{\arabic{section}}
\renewcommand{\theequation}{\thesection.\arabic{equation}}
\setcounter{equation}{0} \setcounter{maintheorem}{0}

Given a Riemannian manifold $M$ and a continuous function
$\lambda(t)$ of the distance parameter $t$ on $M$, we consider the
following system
\begin{eqnarray} \label{ODE}
\left\{
\begin{array}{lll}
f''(t)+\lambda(t)f(t)=0,  \qquad & 0<t<l,\\
f'(0)=1,~f(0)=0,\\
f(t)>0, \qquad & 0<t<l,
\end{array}
\right.
\end{eqnarray}
which will be used to determine our model spaces (i.e., spherically
symmetric manifolds).

We say that a function $\lambda$ on  $M$ satisfies a property
\textbf{P1} if

\begin{itemize}

\item  The function $\lambda(t)$ is continuous, non-positive on
$M\backslash\left(Cut(q)\cup\{q\}\right)$, where $q\in M$ is a given
point, and, as before, $t=\widehat{r}(x)=d_{M}(q,x)$ denotes the
Riemannian distance, on $M$, from the point $q$ to $x$. Besides, it
can be extended such that the extension is continuous (only w.r.t.
the distance parameter also), non-positive on $M$.

\end{itemize}

\begin{theorem} \label{theorem2-1}
Assume that $M$ is an $n$-dimensional ($n\geq2$) complete Riemannian
manifold, $q\in M$, and $\lambda(t)$ is a function on $M$ satisfying
the property \textbf{P1}. If $p>\frac{n}{2}$, then there exists a
positive constant $c(n,p,R)$, which is non-decreasing in $R$, such
that when $r<R$, we have
\begin{eqnarray*}
\left(\frac{\mathrm{vol}(B(q,R))}{\mathrm{vol}\left(\mathcal{B}_{n}(q^{-},R)\right)}\right)^{\frac{1}{2p}}-\left(\frac{\mathrm{vol}(B(q,r))}{\mathrm{vol}\left(\mathcal{B}_{n}(q^{-},r)\right)}\right)^{\frac{1}{2p}}\leq
c(n,p,R)\cdot\left(k_{-}\left(p,q,\lambda(t),R\right)\right)^\frac{1}{2},
\end{eqnarray*}
where $k_{-}\left(p,q,\lambda(t),R\right)$ is defined as in
Definition \ref{def1-1}, and $\mathcal{B}_{n}(q^{-},\cdot)$ denotes
the geodesic ball, with center $q^{-}$ and a prescribed radius, on
the spherically symmetric $n$-manifold
$M^{-}:=[0,\infty)\times_{f}\mathbb{S}^{n-1}$ with the base point
$q^{-}$ and the warping function $f$ determined by the system
(\ref{ODE}). Moreover, when $r=0$, we can obtain
 \begin{eqnarray*}
\mathrm{vol}(B(q,R))\leq\left(1+c(n,p,R)\cdot\left(k_{-}\left(p,q,\lambda(t),R\right)\right)^\frac{1}{2}\right)^{2p}\mathrm{vol}\left(\mathcal{B}_{n}(q^{-},R)\right).
 \end{eqnarray*}
\end{theorem}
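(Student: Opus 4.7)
The plan is to follow the Petersen-Wei integral-curvature blueprint, adapted from the constant-curvature model to the warped-product model determined by the system (\ref{ODE}). The argument reduces to a weighted $L^{2p}$ bound for an ``excess mean curvature'' along each radial geodesic, which is then upgraded to the stated volume comparison.

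Along a geodesic $\gamma_\xi$, set $H(t,\xi) := (n-1)J'(t,\xi)/J(t,\xi)$ and $H_\lambda(t) := (n-1)f'(t)/f(t)$. The Jacobi/Bochner inequality (\ref{FJ-1}) and $f''+\lambda f = 0$ give $H'+H^2/(n-1)\leq -\mathrm{Ric}(\gamma_\xi',\gamma_\xi')$ and $H_\lambda'+H_\lambda^2/(n-1)=-(n-1)\lambda$; subtracting and using $(n-1)\lambda-\mathrm{Ric}\leq\rho$ from (\ref{def1-1-F1}), the nonnegative $\psi:=(H-H_\lambda)_+$ obeys, in the barrier sense,
\[
\psi' + \tfrac{2H_\lambda}{n-1}\psi + \tfrac{\psi^2}{n-1} \leq \rho \quad \text{on } (0,d_\xi), \qquad \psi(0^+)=0.
\]
I would then multiply by the \emph{carefully chosen} test function $\psi^{2p-2}J^{n-1}(t,\xi)$ -- weighting by $J^{n-1}$ (not $f^{n-1}$) is crucial so that the final sphere integration produces precisely $\int_{B(q,R)}\rho^p dv$ -- and integrate by parts on $[0,r]$ using $(J^{n-1})'=J^{n-1}H$ and $H=H_\lambda+\psi$ on $\{\psi>0\}$. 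After organizing the terms and exploiting $p>n/2$ to make the coefficient $(2p-n)/[(n-1)(2p-1)]$ of $\int_0^r J^{n-1}\psi^{2p} dt$ strictly positive, H\"older on the right with conjugates $p$ and $p/(p-1)$ -- relying on the happy identity $(2p-2)p/(p-1)=2p$ -- and cancellation yield the radial bound
\[
\int_0^r J^{n-1}\psi^{2p}(t,\xi) dt \leq \Big[\tfrac{(n-1)(2p-1)}{2p-n}\Big]^p \int_0^r J^{n-1}\rho^p(q,\gamma_\xi(t)) dt.
\]
Integrating over $\xi\in S_q^{n-1}$ and using $dv=J^{n-1} dt\, d\sigma$ upgrades this to the global $L^{2p}$ estimate $\int_{B(q,R)}\psi^{2p} dv \leq C(n,p)^p(k_-)^p$.

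To translate this $L^{2p}$ control of $\psi$ into the volume comparison, I would use $(\ln F^{n-1})'\leq\psi$ with $F:=J/f$, $F(0^+)=1$. Writing $W(r,\xi):=V(r,\xi)/V_\lambda^{\mathrm{rad}}(r)$ where $V(r,\xi):=\int_0^{r\wedge d_\xi}J^{n-1} dt$ and $V_\lambda^{\mathrm{rad}}(r):=\int_0^r f^{n-1} dt$, one has $(W^{1/(2p)})'=f^{n-1}(r)(F^{n-1}(r,\xi)-W(r,\xi))/(2p V_\lambda^{\mathrm{rad}}(r) W^{(2p-1)/(2p)})$; its positive part is controlled by representing $F^{n-1}(r,\xi)-W(r,\xi)$ as the weighted average (with weight $f^{n-1}$ over $s\in[0,r]$) of $\int_s^r F^{n-1}(t)(H-H_\lambda)(t,\xi) dt$ and estimating $H-H_\lambda\leq\psi$. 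Integrating in $r\in[r_0,R]$, applying Minkowski's integral inequality (exponent $2p\geq 1$) to bring the $\xi$-integral outside, and pairing with the Step~2 bound via Cauchy-Schwarz against $\big(\int_0^R f^{-(n-1)/(2p-1)} dt\big)^{(2p-1)/(2p)}$ -- finite precisely because $p>n/2$ makes the integrand integrable at $0$ -- yield the stated inequality, with $c(n,p,R)$ depending only on $n,p$ and on model quantities such as $V_\lambda^{\mathrm{rad}}(R)$ and the above integral, each non-decreasing in $R$. For the $r=0$ case, Taylor expansions $J^{n-1}(t,\xi),f^{n-1}(t)=t^{n-1}+O(t^{n+1})$ at $0$ give $\mathrm{vol}(B(q,r))/\mathrm{vol}(\mathcal{B}_n(q^-,r))\to 1$, and rearranging $(V_M/V_\lambda)^{1/(2p)}\leq 1+c\cdot k_-^{1/2}$ produces the absolute bound.

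The principal obstacle is the algebraic juggling in Step~2: only the test function $\psi^{2p-2}$ (rather than $\psi^{2p-1}$) combined with the weight $J^{n-1}$ (rather than $f^{n-1}$) simultaneously aligns the H\"older exponents via $(2p-2)p/(p-1)=2p$ and produces the natural $dv$-weighted bound, and only for $p>n/2$ does the key coefficient $2p-n$ remain strictly positive after integration by parts. A secondary difficulty is the sign-changing character of $(V/V_\lambda^{\mathrm{rad}})'$, which is handled in Step~3 by bounding only its positive part through the weighted-average identity above rather than the naive estimate $F^{n-1}$; the same hypothesis $p>n/2$ reappears to guarantee integrability of $f^{-(n-1)/(2p-1)}$ at $t=0$, so it really is the single binding constraint of the theorem.
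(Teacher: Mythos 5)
Your proposal is correct and follows essentially the same route as the paper: the Riccati inequality for $\psi=(H-H_\lambda)_+$, the multiplication by the test function $\psi^{2p-2}J^{n-1}$ with integration by parts and H\"older to get $\int_0^r\psi^{2p}J^{n-1}dt\leq\left[\tfrac{(n-1)(2p-1)}{2p-n}\right]^p\int_0^r\rho^pJ^{n-1}dt$ (exactly the paper's Lemma \ref{lemma2-4}, with the identical constant $c_2(n,p)$), and the differential inequality for the volume ratio solved by separation of variables are precisely Lemmas \ref{lemma2-2}, \ref{lemma2-4} and the paper's proof of Theorem \ref{theorem2-1}. The only organizational difference is that you run the ratio estimate per direction $\xi$ and invoke Minkowski's inequality to restore the sphere integral, whereas the paper integrates over $S_q^{n-1}$ first and works with the averaged ratio $y(r)$; both versions rest on the same two facts you identify, namely that \textbf{P1} makes $f$ non-decreasing and that $p>\frac{n}{2}$ makes $\int_0^R\left(\mathrm{vol}\left(\mathcal{B}_{n}(q^{-},s)\right)\right)^{-\frac{1}{2p}}ds$ converge.
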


\begin{remark} \label{remark3-2}
\rm{ (1) If $\lambda(t)\equiv\lambda$ is a non-positive constant,
then
\begin{eqnarray*}
\left\{
\begin{array}{ll}
f(t)=t, \qquad & \lambda=0,\\
f(t)=\sinh(\sqrt{-\lambda}t)/\sqrt{-\lambda}, \qquad & \lambda<0,
\end{array}
\right.
\end{eqnarray*}
 and this situation, the spherically symmetric $n$-manifold $M^{-}$
 degenerates into $\mathbb{R}^{n}$ or $\mathbb{H}^{n}(\lambda)$ (i.e., the hyperbolic $n$-space of constant sectional curvature
 $\lambda$), and, by (\ref{rem3}),
 Theorem \ref{theorem2-1}, we have
 \begin{eqnarray*}
&&\left(\frac{\mathrm{vol}(B(q,R))}{\mathrm{vol}\left(\mathcal{B}_{n}(R)\right)}\right)^{\frac{1}{2p}}-\left(\frac{\mathrm{vol}(B(q,r))}{\mathrm{vol}\left(\mathcal{B}_{n}(r)\right)}\right)^{\frac{1}{2p}}\leq
c(n,p,R)\cdot\left(k_{-}\left(p,q,\lambda,R\right)\right)^\frac{1}{2}\\
&&\qquad \qquad \leq c(n,p,R)\cdot\left[\sup\limits_{x\in
M}\left(\int_{B(x,R)}|\min\{0,h_{1}(z)-(n-1)\lambda\}|^{p}dv\right)^{\frac{1}{2p}}\right]\\
&&\qquad \qquad \leq
c(n,p,R)\cdot\left(\int_{M}|\min\{0,h_{1}(z)-(n-1)\lambda\}|^{p}dv\right)^{\frac{1}{2p}}
\end{eqnarray*}
 and
 \begin{eqnarray*}
\mathrm{vol}(B(q,R))&\leq&\left(1+c(n,p,R)\cdot\left(k_{-}\left(p,q,\lambda,R\right)\right)^\frac{1}{2}\right)^{2p}\mathrm{vol}\left(\mathcal{B}_{n}(R)\right)\\
&\leq&\left[1+c(n,p,R)\cdot\left(\int_{M}|\min\{0,h_{1}(z)-(n-1)\lambda\}|^{p}dv\right)^{\frac{1}{2p}}\right]^{2p}\mathrm{vol}\left(\mathcal{B}_{n}(R)\right),
 \end{eqnarray*}
 where $\mathcal{B}_{n}(R)$ is a geodesic ball, with radius $R$,
 in $\mathbb{R}^{n}$ or $\mathbb{H}^{n}(\lambda)$. Here it is not
 necessary to specify a center for the geodesic ball, since  $\mathbb{R}^{n}$ or
 $\mathbb{H}^{n}(\lambda)$ is two-point homogeneous. From the above
 argument, we know that even in the case $\lambda(t)\equiv\lambda$,
 conclusions of Theorem \ref{theorem2-1} are sharper than those in
 \cite[Theorem 1.1]{pw1}.
\\
(2) For the system (\ref{ODE}), since $\lambda(t)\leq0$, by the
Sturm-Picone separation theorem, we know that $l=\infty$, $f(t)\geq
t$ on $[0,\infty)$, and in this case, the model manifold $M^{-}$ has
the form $[0,\infty)\times_{f}\mathbb{S}^{n-1}$. Except the
non-positivity assumption of $\lambda(t)$, it is interesting to find
other conditions such that (\ref{ODE}) has a positive solution on
$(0,\infty)$. This problem has close relation with the oscillation
phenomenon of solutions of the ODE $f''(t)+\lambda(t)f(t)=0$. Mao
\cite[Subsection 2.6]{m1} investigated this problem and gave several
sufficient conditions such that (\ref{ODE}) has a positive solution
on $(0,\infty)$. For instance, he showed that if
$\lambda(t)\leq\frac{1}{4(t+1)^2}$ for $t>0$, then (\ref{ODE}) has a
positive solution on $(0,\infty)$. Consider an ODE
\begin{eqnarray*}
f''(t)+\frac{\gamma}{(t+1)^2}f(t)=0
\end{eqnarray*}
with $\gamma>0$ a constant, and it is not difficult to know that
this ODE is oscillatory for $\gamma>\frac{1}{4}$ but non-oscillatory
for $\gamma\leq\frac{1}{4}$. In this sense, the choice of the
function $\lambda(t)=\frac{1}{4(t+1)^2}$ somehow might be  critical
such that (\ref{ODE}) has a positive solution on $(0,\infty)$.
Several criterions for the non-existence of the long-time positive
solution to the system (\ref{ODE}) have also been given in
\cite[Subsection 2.6]{m1}. There is an interesting result we also
would like to mention here, that is, Hille \cite{eh} gave a nice
sufficient condition involving the $L^1$-norm of $\lambda(t)$ such
that (\ref{ODE}) has a positive solution on $(0,\infty)$.
\\
 (3) If $k_{-}\left(p,q,\lambda(t),R\right)\equiv0$, then
$\mathrm{Ric}(\frac{d}{dt},\frac{d}{dt})\geq(n-1)\lambda(t)$ for
$0<t<R$, where $\lambda(t)$ is a non-positive continuous function
w.r.t. the distance parameter $t$, and, by Theorem \ref{theorem2-1},
the following Bishop-Gromov type relative volume comparison estimate
\begin{eqnarray*}
\frac{\mathrm{vol}(B(q,R))}{\mathrm{vol}\left(\mathcal{B}_{n}(q^{-},R)\right)}\leq\frac{\mathrm{vol}(B(q,r))}{\mathrm{vol}\left(\mathcal{B}_{n}(q^{-},r)\right)},
\qquad r<R,
\end{eqnarray*}
can be obtained, which, by letting $r\rightarrow0$, yields
\begin{eqnarray*}
\mathrm{vol}(B(q,R))\leq\mathrm{vol}\left(\mathcal{B}_{n}(q^{-},R)\right),
\qquad R\geq0.
\end{eqnarray*}
The above two estimates have been shown in \cite{fmi,m1,m3} - for
details, see, e.g., \cite[Corollary 3.4]{m3}, and from which we know
that these two estimates are also valid without the non-positivity
assumption of $\lambda(t)$. Therefore, it is natural to ask a
problem as follows:

\begin{itemize}
\item (\textbf{Problem 1}) \emph{Similar conclusions to those of Theorem \ref{theorem2-1} might be obtained if the non-positivity of $\lambda(t)$ was
removed}.
\end{itemize}
(4) If one carefully check the proof of Theorem \ref{theorem2-1}
below, then it would be found that the non-positivity of
$\lambda(t)$ was used in (\ref{2-2-2}), that is, we need to use the
fact $f(t)\leq f(r)$ for $t\leq r$. However, in order to have
$f(t)\leq f(r)$ for $t\leq r$, it is not necessary to require the
non-positivity of $\lambda(t)$. In fact, since $f'(0)=1$, there
exists some $t_{0}>0$ such that $f'(t_0)=0$ and $f'(t)|_{[0,t_0)}>0$
that is, $t_0$ is the first positive zero point of $f'(t)$.
Therefore, one has $f(t)\leq f(r)$ for $0\leq t\leq r\leq t_0$,
which implies that one can also get the conclusion of Theorem
\ref{theorem2-1} if the non-positivity assumption of $\lambda(t)$
was removed, but only for the situation $R\leq t_0$. For instance,
if $\lambda(t)\equiv\lambda^{+}$ for some positive constant
$\lambda^{+}>0$, then
$f(t)=\frac{\sin(\sqrt{\lambda^{+}}t)}{\sqrt{\lambda^{+}}}$, and
similar conclusions to those of Theorem \ref{theorem2-1} can be
attained only for $r<R\leq\frac{\pi}{2\sqrt{\lambda^{+}}}$. Hence,
this fact gives an affirmative answer to \textbf{Problem 1} issued
above. \\
(5) In order to let the conclusion in Theorem \ref{theorem2-1} be
valid for any $R>0$, we require that $\lambda(t)$ satisfies the
property \textbf{P1}.
 }
\end{remark}

\begin{lemma} \label{lemma2-2}
Under assumptions of Theorem \ref{theorem2-1}, for the volume ratio
$\frac{\mathrm{vol}(B(q,r))}{\mathrm{vol}\left(\mathcal{B}_{n}(q^{-},r)\right)}=\frac{\mathrm{vol}(B(q,r))}{w_{n}\int_{0}^{r}f^{n-1}(t)dt}$,
we have
\begin{eqnarray*}
\frac{d}{dr}\frac{\mathrm{vol}(B(q,r))}{\mathrm{vol}\left(\mathcal{B}_{n}(q^{-},r)\right)}\leq
c_{1}(n,r)\left(\frac{\mathrm{vol}(B(q,r))}{\mathrm{vol}\left(\mathcal{B}_{n}(q^{-},r)\right)}\right)^{1-\frac{1}{2p}}\left(\int_{B(q,r)}\psi^{2p}dv\right)^{\frac{1}{2p}}
\left(\mathrm{vol}\left(\mathcal{B}_{n}(q^{-},r)\right)\right)^{-\frac{1}{2p}},
\end{eqnarray*}
where
\begin{eqnarray*}
&&c_{1}(n,r)=\max\limits_{t\in(0,r]}\frac{tf^{n-1}(t)}{\int_{0}^{t}f^{n-1}(s)ds},\\
&&c_{1}(n,0)=n,\\
&&\psi=\psi(t,\xi)=\max\left\{0,(n-1)\cdot\left[\frac{J'(t,\xi)}{J(t,\xi)}-\frac{f'(t)}{f(t)}\right]\right\}
\quad if ~0<t<d_{\xi}(q),
\end{eqnarray*}
 $\psi(t,\xi)=0$ if $t\geq d_{\xi}(q)$, and $J(t,\xi)$ is defined by (\ref{function-J}).
\end{lemma}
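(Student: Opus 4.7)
The plan is to compute $h'(r)$, where $h(r) := \mathrm{vol}(B(q,r))/\mathrm{vol}(\mathcal{B}_n(q^-,r))$, directly via the quotient rule, reduce the numerator to an integral against $\psi$ by a Riccati-type pointwise comparison between $J(t,\xi)$ and $f(t)$, and then apply H\"older's inequality to produce the $\psi^{2p}$ norm. Writing $A(r) := \mathrm{vol}(B(q,r))$ and $V(r) := w_n\int_0^r f^{n-1}(t)\,dt$, the identity $h'(r)V(r)^2 = A'(r)V(r) - A(r)V'(r)$, together with a splitting that discards a non-negative term coming from the set $\{t < d_\xi \leq r\}$, yields
\[
h'(r)\,V(r)^2 \;\leq\; w_n f^{n-1}(r)\int_0^r f^{n-1}(t)\int_{\{d_\xi>r\}}\bigl[F(r,\xi) - F(t,\xi)\bigr]\,d\sigma\,dt,
\]
where $F(s,\xi) := (J(s,\xi)/f(s))^{n-1}$.

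The crux is a pointwise Riccati-type inequality. A direct chain-rule computation gives $F'(s,\xi) = (n-1)F(s,\xi)[J'/J - f'/f]$, and the very definition of $\psi$ as the positive part of $(n-1)(J'/J - f'/f)$ forces $F'(s,\xi) \leq F(s,\xi)\psi(s,\xi)$; integrating from $t$ to $r$ produces $F(r,\xi) - F(t,\xi) \leq \int_t^r F(s,\xi)\psi(s,\xi)\,ds$. I would substitute this into the preceding display and swap the order of the $t$- and $s$-integrations (the domain becomes $\{0\le t\le s\le r\}$, producing $\int_0^s f^{n-1}(t)\,dt = V(s)/w_n$), then recognise the polar volume element $dv = J^{n-1}(s,\xi)\,ds\,d\sigma$ together with $F(s,\xi) = J^{n-1}(s,\xi)/f^{n-1}(s)$ to reorganise the bound as
\[
h'(r)\,V(r)^2 \;\leq\; f^{n-1}(r)\int_{B(q,r)}\frac{V(\widehat r)}{f^{n-1}(\widehat r)}\,\psi(x)\,dv(x).
\]

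The final leg invokes property \textbf{P1}: non-positivity of $\lambda$ gives $f''=-\lambda f\geq 0$ and $f'(0)=1$, so $f$ is non-decreasing, whence $V(t)/f^{n-1}(t)\leq w_n t$ for $t\in(0,r]$. Combined with $\widehat r\leq r$ on $B(q,r)$ and H\"older's inequality with exponents $2p$ and $2p/(2p-1)$, this gives $\int_{B(q,r)}\psi\,dv\leq A(r)^{1-1/(2p)}(\int_{B(q,r)}\psi^{2p}dv)^{1/(2p)}$. Specialising $t=r$ in the definition of $c_1(n,r)$ yields $w_n r f^{n-1}(r)/V(r)\leq c_1(n,r)$, and substituting $A^{1-1/(2p)} = h^{1-1/(2p)}V^{1-1/(2p)}$ produces the claimed estimate; the boundary value $c_1(n,0)=n$ drops out of the Taylor expansion $f(t)=t+O(t^3)$ (since $f''(0)=0$). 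The one technical obstacle is the cut locus: the truncation $\chi_{\{t<d_\xi\}}$ implicit in $J^{n-1}$ can introduce a downward jump at $t=d_\xi$, but any such jump is in the favourable direction for the comparison $F' \leq F\psi$, so the integrated inequality survives and $h'(r)$ exists almost everywhere, which is all that the subsequent integration in the proof of Theorem~\ref{theorem2-1} will require.
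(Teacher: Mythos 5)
Your argument is correct and is essentially the paper's own proof: the same Riccati-type inequality $\frac{d}{ds}(J^{n-1}/f^{n-1})\leq\psi\,(J^{n-1}/f^{n-1})$, the same use of $\lambda\leq 0$ (via \textbf{P1}) to get $f$ non-decreasing, the same H\"older step producing the $\psi^{2p}$ norm, and the same identification of $c_{1}(n,r)$ with its limit $n$ at $r=0$. The only differences are bookkeeping — you integrate the pointwise ratio and apply Fubini where the paper bounds the sphere-averaged ratio crudely by $f(t)\leq f(r)$ — and both correctly note that the cut-locus jump of $J^{n-1}$ is downward and hence harmless.
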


\begin{proof}
By direct calculation, on $\mathcal{D}_{q}$, which is diffeomorphic
to $M\backslash Cut(q)$, one has
\begin{eqnarray*}
\frac{d}{dt}\left(\frac{J^{n-1}(t,\xi)}{f^{n-1}(t)}\right)=(n-1)\frac{J^{n-1}(t,\xi)}{f^{n-1}(t)}\left(\frac{J'}{J}-\frac{f'}{f}\right)
\leq\psi(t,\xi)\frac{J^{n-1}(t,\xi)}{f^{n-1}(t)}.
\end{eqnarray*}
This inequality is also valid on the cut locus $Cut(q)$, since the
singular part of the derivative of $J^{n-1}(t,\xi)$ has negative
measure. Therefore, we have
 \begin{eqnarray*}
 \frac{d}{dr}\frac{\int_{\mathbb{S}^{n-1}}J^{n-1}(r,\xi)d\sigma}{\int_{\mathbb{S}^{n-1}}f^{n-1}(r)d\sigma}&=&\frac{\int_{\mathbb{S}^{n-1}}\frac{d}{dr}\left(\frac{J^{n-1}(r,\xi)}{f^{n-1}(r)}\right)d\sigma}{w_{n}}\\
 &\leq&\frac{1}{w_{n}}\int_{\mathbb{S}^{n-1}}\psi(r,\xi)\frac{J^{n-1}(r,\xi)}{f^{n-1}(r)}d\sigma,
 \end{eqnarray*}
 which implies that, for $t\leq r$, the following inequality
\begin{eqnarray} \label{2-2-1}
\frac{\int_{\mathbb{S}^{n-1}}J^{n-1}(r,\xi)d\sigma}{\int_{\mathbb{S}^{n-1}}f^{n-1}(r)d\sigma}-\frac{\int_{\mathbb{S}^{n-1}}J^{n-1}(t,\xi)d\sigma}{\int_{\mathbb{S}^{n-1}}f^{n-1}(t)d\sigma}\leq
\frac{1}{w_{n}}\int_{t}^{r}\int_{\mathbb{S}^{n-1}}\psi(r,\xi)\frac{J^{n-1}(r,\xi)}{f^{n-1}(r)}d\sigma
ds
\end{eqnarray}
holds. Since $\lambda(t)$ is a non-positive continuous function on
$(0,r)$, by the system (\ref{ODE}), it is easy to know that
$f''(t)\geq0$ on $(0,r)$, which implies $f'(t)\geq f'(0)=1$ for
$0<t<r$. Hence, one has $f(t)\leq f(r)$ provided $t\leq r$.
Combining this fact with (\ref{2-2-1}) yields
\begin{eqnarray} \label{2-2-2}
&&\int_{\mathbb{S}^{n-1}}J^{n-1}(r,\xi)d\sigma\cdot\int_{\mathbb{S}^{n-1}}f^{n-1}(t)d\sigma-\int_{\mathbb{S}^{n-1}}J^{n-1}(t,\xi)d\sigma\cdot\int_{\mathbb{S}^{n-1}}f^{n-1}(r)d\sigma\nonumber\\
&& \qquad
\leq \frac{1}{w_{n}}\int_{t}^{r}\int_{\mathbb{S}^{n-1}}\psi(r,\xi)\frac{J^{n-1}(r,\xi)}{f^{n-1}(r)}d\sigma ds\cdot\left(\int_{\mathbb{S}^{n-1}}f^{n-1}(r)d\sigma\right)\cdot\left(\int_{\mathbb{S}^{n-1}}f^{n-1}(t)d\sigma\right)\nonumber\\
&& \qquad \leq
f^{n-1}(r)\int_{t}^{r}\int_{\mathbb{S}^{n-1}}\psi(r,\xi)\frac{J^{n-1}(r,\xi)}{f^{n-1}(r)}d\sigma ds\cdot\left(\int_{\mathbb{S}^{n-1}}f^{n-1}(r)d\sigma\right)\nonumber\\
&& \qquad = \int_{t}^{r}\int_{\mathbb{S}^{n-1}}\psi(r,\xi)J^{n-1}(r,\xi)d\sigma ds\cdot\left(\int_{\mathbb{S}^{n-1}}f^{n-1}(r)d\sigma\right)\nonumber\\
&& \qquad \leq
w_{n}f^{n-1}(r)\int_{0}^{r}\int_{\mathbb{S}^{n-1}}\psi(r,\xi)J^{n-1}(r,\xi)d\sigma
ds \nonumber\\
 && \qquad = w_{n}f^{n-1}(r) \int_{B(q,r)}\psi dv \nonumber\\
&& \qquad \leq w_{n}f^{n-1}(r) \left(
\int_{B(q,r)}\psi^{2p}dv\right)^{\frac{1}{2p}}\cdot \left(
\int_{B(q,r)}dv\right)^{1-\frac{1}{2p}} \nonumber\\
&& \qquad = w_{n}f^{n-1}(r) \cdot
\left(\mathrm{vol}(B(q,r))\right)^{1-\frac{1}{2p}} \cdot\left(
\int_{B(q,r)}\psi^{2p}dv\right)^{\frac{1}{2p}},
\end{eqnarray}
where the last inequality holds by using Young's inequality
directly. Therefore, applying (\ref{2-2-2}), one can get
\begin{eqnarray}  \label{2-2-3}
\frac{d}{dr}\frac{\mathrm{vol}(B(q,r))}{\mathrm{vol}\left(\mathcal{B}_{n}(q^{-},r)\right)}&=&\frac{\left(\int_{\mathbb{S}^{n-1}}J^{n-1}(r,\xi)d\sigma\right)
\cdot\left(\int_{0}^{r}\int_{\mathbb{S}^{n-1}}f^{n-1}(t)d\sigma
dt\right)}
{\left(\mathrm{vol}\left(\mathcal{B}_{n}(q^{-},r)\right)\right)^2}\nonumber\\
&& \qquad -
\frac{\left(\int_{\mathbb{S}^{n-1}}f^{n-1}(r)d\sigma\right)
\cdot\left(\int_{0}^{r}\int_{\mathbb{S}^{n-1}}J^{n-1}(t,\xi)d\sigma
dt\right)}
{\left(\mathrm{vol}\left(\mathcal{B}_{n}(q^{-},r)\right)\right)^2}\nonumber\\
&=&
\frac{1}{\left(\mathrm{vol}\left(\mathcal{B}_{n}(q^{-},r)\right)\right)^2}\Bigg{[}\int_{0}^{r}\Bigg{(}\int_{\mathbb{S}^{n-1}}J^{n-1}(r,\xi)d\sigma\cdot\int_{\mathbb{S}^{n-1}}f^{n-1}(t)d\sigma\nonumber\\
&& \qquad -
\int_{\mathbb{S}^{n-1}}J^{n-1}(t,\xi)d\sigma\cdot\int_{\mathbb{S}^{n-1}}f^{n-1}(r)d\sigma\Bigg{)}dt\Bigg{]}\nonumber\\
&\leq& \frac{\int_{0}^{r}w_{n}f^{n-1}(r) \cdot
\left(\mathrm{vol}(B(q,r))\right)^{1-\frac{1}{2p}} \cdot\left(
\int_{B(q,r)}\psi^{2p}dv\right)^{\frac{1}{2p}}dt}{\left(\mathrm{vol}\left(\mathcal{B}_{n}(q^{-},r)\right)\right)^2}\nonumber\\
&=&\frac{w_{n}\cdot r \cdot f^{n-1}(r) \cdot
\left(\mathrm{vol}(B(q,r))\right)^{1-\frac{1}{2p}} \cdot\left(
\int_{B(q,r)}\psi^{2p}dv\right)^{\frac{1}{2p}}}{\left(\mathrm{vol}\left(\mathcal{B}_{n}(q^{-},r)\right)\right)^2}.
\end{eqnarray}
Since, by applying the L'H\^{o}pital's rule, we have
\begin{eqnarray*}
\lim\limits_{r\rightarrow0}\frac{w_{n}\cdot r \cdot
f^{n-1}(r)}{\mathrm{vol}\left(\mathcal{B}_{n}(q^{-},r)\right)}=\lim\limits_{r\rightarrow0}\frac{r
\cdot
f^{n-1}(r)}{\int_{0}^{r}f^{n-1}(t)dt}=\lim\limits_{r\rightarrow0}\frac{f^{n-1}(r)+r(n-1)f^{n-2}(r)f'(r)}{f^{n-1}(r)}=n,
\end{eqnarray*}
the maximum value $c_{1}(n,r)$ of $\frac{w_{n}\cdot r \cdot
f^{n-1}(r)}{\mathrm{vol}\left(\mathcal{B}_{n}(q^{-},r)\right)}$ can
be achieved on $(0,r]$. Especially, when $\lambda(x)\equiv0$, then
$f(t)=t$ for $0\leq t\leq r$, and in this case $c_{1}(n,r)=n$ for
all $r\geq0$. Putting this fact into (\ref{2-2-3}), one can easily
get the conclusion of Lemma \ref{lemma2-2}. The proof is finished.
\end{proof}

\begin{remark}
\rm{ Clearly, if $\lambda(t)\equiv\lambda$ is  a non-positive
constant, then the model space $M^{-}$ degenerates into
$\mathbb{R}^n$ or $\mathbb{H}^{n}(\lambda)$, and moreover, even in
this setting the conclusion of Lemma \ref{lemma2-2} also covers the
one of \cite[Lemma 2.1]{pw1} as a special case, since in
(\ref{def1-1-F1}) it only needs the radial Ricci curvature not the
Ricci curvature. Besides, we would like to point out one thing here,
that is, $c_{1}(n,r)$ has close relation with the function
$\lambda(t)$, since $c_{1}(n,r)$ also relies on the maximum value of
$f(t)$ on $(0,r]$, and meanwhile the warping function $f(t)$ is
determined by the system (\ref{ODE}). }
\end{remark}

We also need the following fact.

\begin{lemma} \label{lemma2-4}
There exists a constant $c_{2}(n,p)$, depending only on $n$ and $p$,
such that when $p>\frac{n}{2}$, we have
\begin{eqnarray*}
\int_{0}^{r}\psi^{2p}(t,\xi)\cdot J^{n-1}(t,\xi)dt\leq
c_{2}(n,p)\int_{0}^{r}\rho^{p}J^{n-1}(t,\xi)dt.
\end{eqnarray*}
\end{lemma}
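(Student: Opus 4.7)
The plan is to run a Riccati/Petersen--Wei style comparison along the radial geodesic. Writing $h := (n-1) J'/J$ and $h_\lambda := (n-1) f'/f$, the inequality (\ref{FJ-1}) reads $h' + h^2/(n-1) + \mathrm{Ric}(\gamma_\xi',\gamma_\xi') \leq 0$, while the ODE (\ref{ODE}) gives its equality version with $\mathrm{Ric}$ replaced by $(n-1)\lambda(t)$. Subtracting, using the algebraic identity $h^2 - h_\lambda^2 = (h-h_\lambda)^2 + 2 h_\lambda(h-h_\lambda)$ and the pointwise bound $-(\mathrm{Ric} - (n-1)\lambda) \leq \rho$ built into Definition \ref{def1-1}, I obtain on the set $\{\psi > 0\}$ the Riccati-type differential inequality
\[
\psi' + \frac{\psi^{2}}{n-1} + \frac{2 h_\lambda}{n-1}\psi \leq \rho, \qquad \psi := \max\{0, h - h_\lambda\}.
\]

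Next I would multiply this by $(2p-1)\psi^{2p-2} J^{n-1}$, which turns the derivative term into $(\psi^{2p-1})' J^{n-1}$. Combining with $(J^{n-1})' = h J^{n-1}$ and the substitution $h = \psi + h_\lambda$ (valid where $\psi > 0$) via the Leibniz rule $(\psi^{2p-1} J^{n-1})' = (\psi^{2p-1})' J^{n-1} + h\psi^{2p-1} J^{n-1}$ yields
\[
(\psi^{2p-1} J^{n-1})' \leq -\alpha \psi^{2p} J^{n-1} - \beta h_\lambda \psi^{2p-1} J^{n-1} + (2p-1)\rho\, \psi^{2p-2} J^{n-1},
\]
with $\alpha := \frac{2p-1}{n-1} - 1$ and $\beta := \frac{2(2p-1)}{n-1} - 1$. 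The hypothesis $p > n/2$ is exactly what makes $\alpha, \beta > 0$, so this is the sole place where that assumption is consumed. Moreover, property \textbf{P1} forces $\lambda(t) \leq 0$, so (\ref{ODE}) gives $f'' \geq 0$, whence $f' \geq 1$ and $h_\lambda \geq 0$; therefore the middle term on the right is non-positive and can be dropped.

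Integrating from $0$ to $r$, the boundary contribution at $t = 0$ vanishes thanks to (\ref{FJ-2}) (which makes $J(0) = 0$), and the contribution at $t = r$ is non-negative, giving
\[
\alpha \int_{0}^{r} \psi^{2p} J^{n-1}\, dt \leq (2p-1) \int_{0}^{r} \rho\, \psi^{2p-2} J^{n-1}\, dt.
\]
Applying Young's inequality with conjugate exponents $p$ and $p/(p-1)$ to the product $\rho \cdot \psi^{2p-2}$ and choosing the Young parameter small enough to absorb half of the resulting $\psi^{2p}$ term into the left-hand side delivers the desired bound with explicit constant $c_2(n,p) = 2(2p-1) C(p)/\alpha$, depending only on $n$ and $p$. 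The main technical point I foresee is the behaviour at the cut locus: the Riccati inequality is classically valid only on $[0, d_\xi)$, but since $\psi$ is defined to vanish for $t \geq d_\xi$ the left-hand integrand vanishes beyond the cut value, and the distributional derivative of $\psi^{2p-1} J^{n-1}$ acquires only non-positive jumps at $t = d_\xi$ (the usual barrier/Calabi argument), so the integrated inequality survives intact.
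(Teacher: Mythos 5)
Your argument is correct and is essentially the proof in the paper: you derive the same Riccati-type inequality $\psi'+\frac{\psi^{2}}{n-1}+\frac{2f'}{f}\psi\leq\rho$, multiply by $\psi^{2p-2}J^{n-1}$, use $\lambda\leq 0$ (hence $f'/f\geq 0$) to discard the middle term, and exploit the sign of the boundary terms exactly as the paper does via its integration by parts, arriving at the same key estimate $\left(\frac{1}{n-1}-\frac{1}{2p-1}\right)\int_{0}^{r}\psi^{2p}J^{n-1}\,dt\leq\int_{0}^{r}\rho\,\psi^{2p-2}J^{n-1}\,dt$. The only (harmless) deviation is the final step, where you close with Young's inequality and absorption instead of the paper's H\"older-and-divide argument, which merely produces a different admissible constant $c_{2}(n,p)$ in place of $\left(\frac{1}{n-1}-\frac{1}{2p-1}\right)^{-p}$.
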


\begin{proof}
Since $M$ is complete, it is easy to know that $\psi$ is absolutely
continuous. By direct calculation, one has
\begin{eqnarray*}
\psi'+\frac{\psi^{2}}{n-1}+\frac{2\psi
f'}{f}&=&(n-1)\max\Bigg{\{}0,\left[\frac{J''J-(J')^{2}}{J^2}-\frac{f''f-(f')^{2}}{f^2}\right]\\
&&\qquad
+\left[\frac{J'(t,\xi)}{J(t,\xi)}-\frac{f'(t)}{f(t)}\right]^{2}+2\left[\frac{J'(t,\xi)}{J(t,\xi)}-\frac{f'(t)}{f(t)}\right]\cdot\frac{f'}{f}\Bigg{\}}\\
&=&
\max\left\{0,(n-1)\left(\frac{J''}{J}-\frac{f''}{f}\right)\right\}\\
&\leq&\max\left\{0,(n-1)\lambda(t)-\mathrm{Ric}(v_{x},v_{x})\right\}\\
&=&\rho.
\end{eqnarray*}
Multiplying both sides of the above inequality by
$\psi^{2p-2}J^{n-1}$ and integrating over the interval $(0,r]$
yields
\begin{eqnarray} \label{2-4-1}
&&\int_{0}^{r}\psi'\psi^{2p-2}J^{n-1}dt+\frac{1}{n-1}\int_{0}^{r}\psi'\psi^{2p}J^{n-1}dt+2\int_{0}^{r}\frac{f'}{f}\psi^{2p-1}J^{n-1}dt \nonumber\\
&& \qquad\qquad\qquad\qquad\qquad \leq
\int_{0}^{r}\rho\cdot\psi^{2p-2}J^{n-1}dt.
\end{eqnarray}
On the other hand,
\begin{eqnarray*}
\int_{0}^{r}\psi'\psi^{2p-2}J^{n-1}dt&=&\frac{1}{2p-1}\psi^{2p-1}J^{n-1}\Bigg{|}_{0}^{r}-\frac{n-1}{2p-1}\int_{0}^{r}\psi^{2p-1}J^{n-2}J'dt\\
&\geq& -\frac{n-1}{2p-1}\int_{0}^{r}\psi^{2p-1}J^{n-2}J'dt\\
&\geq&
-\frac{1}{2p-1}\int_{0}^{r}\psi^{2p-1}J^{n-1}\left[\psi+(n-1)\frac{f'}{f}\right]dt\\
&=&-\frac{1}{2p-1}\int_{0}^{r}\psi^{2p}J^{n-1}dt-\frac{n-1}{2p-1}\int_{0}^{r}\psi^{2p-1}J^{n-1}\frac{f'}{f}dt.
\end{eqnarray*}
Substituting this inequality into (\ref{2-4-1}) results into
\begin{eqnarray*}
\left(\frac{1}{n-1}-\frac{1}{2p-1}\right)\int_{0}^{r}\psi^{2p}J^{n-1}dt+\left(2-\frac{n-1}{2p-1}\right)\int_{0}^{r}\psi^{2p-1}J^{n-1}\frac{f'}{f}dt\leq
\int_{0}^{r}\rho\cdot\psi^{2p-2}J^{n-1}dt.
\end{eqnarray*}
Therefore, when $p>\frac{n}{2}$, we have
$\frac{1}{n-1}-\frac{1}{2p-1}>0$, $0\leq2-\frac{n-1}{2p-1}<1$, and,
together with the Young's inequality, it follows that
\begin{eqnarray*}
\left(\frac{1}{n-1}-\frac{1}{2p-1}\right)\int_{0}^{r}\psi^{2p}J^{n-1}dt&\leq&\int_{0}^{r}\rho\cdot\psi^{2p-2}J^{n-1}dt\\
&\leq&\left(\int_{0}^{r}\rho^{p}J^{n-1}dt\right)^{\frac{1}{p}}\cdot\left(\int_{0}^{r}\psi^{2p}J^{n-1}dt\right)^{1-\frac{1}{p}},
\end{eqnarray*}
which implies
\begin{eqnarray*}
\left(\frac{1}{n-1}-\frac{1}{2p-1}\right)\left(\int_{0}^{r}\psi^{2p}J^{n-1}dt\right)^{\frac{1}{p}}\leq\left(\int_{0}^{r}\rho^{p}J^{n-1}dt\right)^{\frac{1}{p}},
\end{eqnarray*}
i.e.,
\begin{eqnarray*}
\int_{0}^{r}\psi^{2p}J^{n-1}dt\leq\left(\frac{1}{n-1}-\frac{1}{2p-1}\right)^{-p}\int_{0}^{r}\rho^{p}J^{n-1}dt.
\end{eqnarray*}
Hence, the assertion of Lemma \ref{lemma2-4} follows directly by
choosing $c_{2}:=\left(\frac{1}{n-1}-\frac{1}{2p-1}\right)^{-p}$.
\end{proof}

Now, we can give the proof of Theorem \ref{theorem2-1} as follows:

$\\$ \textbf{Proof of Theorem \ref{theorem2-1}}. Set
\begin{eqnarray*}
y(r):=\frac{\mathrm{vol}(B(q,r))}{\mathrm{vol}\left(\mathcal{B}_{n}(q^{-},r)\right)}.
\end{eqnarray*}
By Lemma \ref{lemma2-2}, one can easily get the following
differential inequality
\begin{eqnarray} \label{pt-1}
\frac{d}{dr}y(r)\leq
c_{1}(n,r)\left(\mathrm{vol}\left(\mathcal{B}_{n}(q^{-},r)\right)\right)^{-\frac{1}{2p}}\cdot
y^{1-\frac{1}{2p}}\cdot h(r)
\end{eqnarray}
where $h(r)$ is given by
\begin{eqnarray*}
h(r):=\left(\int_{B(q,r)}\psi^{2p}dv\right)^{\frac{1}{2p}}.
\end{eqnarray*}
It is clear that $y(0)>0$ for $r\geq0$, and
\begin{eqnarray*}
y(0)=\lim\limits_{r\rightarrow0}\frac{\mathrm{vol}(B(q,r))}{\mathrm{vol}\left(\mathcal{B}_{n}(q^{-},r)\right)}=1
\end{eqnarray*}
by applying  the L'H\^{o}pital's rule. Solving (\ref{pt-1}) directly
by separation of variables and integrating over the interval $[r,R]$
yield
\begin{eqnarray} \label{pt-2}
2p\cdot \left(y(R)\right)^{\frac{1}{2p}}-2p\cdot
\left(y(r)\right)^{\frac{1}{2p}}&\leq&\int_{r}^{R}c_{1}(n,s)\left(\mathrm{vol}\left(\mathcal{B}_{n}(q^{-},s)\right)\right)^{-\frac{1}{2p}}\cdot
h(s)ds \nonumber\\
&\leq& c_{1}(n,R)\cdot
c_{2}(n,p)\int_{r}^{R}\left(\mathrm{vol}\left(\mathcal{B}_{n}(q^{-},s)\right)\right)^{-\frac{1}{2p}}ds
\nonumber\\
&& \qquad \qquad \qquad
\cdot\left(k_{-}\left(p,q,\lambda(t),R\right)\right)^\frac{1}{2},
\end{eqnarray}
where the last inequality holds since $c_{1}(n,s)$ is non-decreasing
in $s$, and by Lemma \ref{lemma2-4}, one has
\begin{eqnarray*}
h(r)\leq
c_{2}(n,p)\left(\int_{B(q,R)}\rho^{p}dv\right)^{\frac{1}{2p}}=c_{2}(n,p)\cdot\left(k_{-}\left(p,q,\lambda(t),R\right)\right)^\frac{1}{2}.
\end{eqnarray*}
Set
\begin{eqnarray}  \label{EXP-C}
c(n,p,R):=\frac{1}{2p}c_{1}(n,R)\cdot
c_{2}(n,p)\int_{0}^{R}\left(\mathrm{vol}\left(\mathcal{B}_{n}(q^{-},s)\right)\right)^{-\frac{1}{2p}}ds,
\end{eqnarray}
which is well-defined\footnote{ Since $\lambda(t)\leq0$, by the
Sturm-Picone separation theorem, one has $f(t)\geq t$ on
$[0,\infty)$, which implies
\begin{eqnarray*}
0<\int_{r}^{R}\left(\mathrm{vol}\left(\mathcal{B}_{n}(q^{-},s)\right)\right)^{-\frac{1}{2p}}ds\leq\int_{r}^{R}\left(\frac{w_{n}s^{n}}{n}\right)^{-\frac{1}{2p}}ds<+\infty
\end{eqnarray*}
since $p>\frac{n}{2}$. Hence, from this fact, we know that
$\int_{r}^{R}\left(\mathrm{vol}\left(\mathcal{B}_{n}(q^{-},s)\right)\right)^{-\frac{1}{2p}}ds$
is convergent as $r\rightarrow0$. This means the constant $c(n,p,R)$
is well-defined.} and non-decreasing in $R$. Then from (\ref{pt-2}),
we can obtain
\begin{eqnarray*}
\left(\frac{\mathrm{vol}(B(q,R))}{\mathrm{vol}\left(\mathcal{B}_{n}(q^{-},R)\right)}\right)^{\frac{1}{2p}}-\left(\frac{\mathrm{vol}(B(q,r))}{\mathrm{vol}\left(\mathcal{B}_{n}(q^{-},r)\right)}\right)^{\frac{1}{2p}}\leq
c(n,p,R)\cdot\left(k_{-}\left(p,q,\lambda(t),R\right)\right)^\frac{1}{2},
\end{eqnarray*}
which is exactly the first assertion of Theorem \ref{theorem2-1}.
Furthermore, when $r=0$, $y(0)=1$, and then
\begin{eqnarray*}
\left(\frac{\mathrm{vol}(B(q,R))}{\mathrm{vol}\left(\mathcal{B}_{n}(q^{-},R)\right)}\right)^{\frac{1}{2p}}-1\leq
c(n,p,R)\cdot\left(k_{-}\left(p,q,\lambda(t),R\right)\right)^\frac{1}{2},
\end{eqnarray*}
which implies
\begin{eqnarray*}
\frac{\mathrm{vol}(B(q,R))}{\mathrm{vol}\left(\mathcal{B}_{n}(q^{-},R)\right)}\leq\left(1+c(n,p,R)\cdot\left(k_{-}\left(p,q,\lambda(t),R\right)\right)^\frac{1}{2}\right)^{2p}.
\end{eqnarray*}
The second assertion of Theorem \ref{theorem2-1} follows directly.
\hfill $\square$

Naturally, we can get the following volume doubling result.

\begin{corollary} \label{corollary3-6}
Given $D>0$, under assumptions of Theorem \ref{theorem2-1}, for all
$0<\alpha<1$, one can find
$$\epsilon:=\epsilon\left(n,p,q,\lambda(t),D,\alpha\right)>0$$
such that if $\ell(q)\leq D$, with $\ell(q)$ defined by
(\ref{key-def1}), and
$\overline{k_{-}}\left(p,q,\lambda(t),D\right)\leq\epsilon$, then
the inequality
\begin{eqnarray*}
\alpha\cdot\frac{\mathrm{vol}\left(\mathcal{B}_{n}(q^{-},r)\right)}{\mathrm{vol}\left(\mathcal{B}_{n}(q^{-},D)\right)}\leq\frac{\mathrm{vol}(B(q,r))}{\mathrm{vol}(M)}
\end{eqnarray*}
holds for $r<D$.
\end{corollary}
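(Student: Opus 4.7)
\textbf{Proof plan for Corollary \ref{corollary3-6}.}

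The plan is to reduce everything to the inequality supplied by Theorem \ref{theorem2-1} and then convert the non-normalized quantity $k_{-}$ into its average $\overline{k_{-}}$, where the smallness hypothesis lives. First I would observe that the hypothesis $\ell(q)\leq D$ forces $B(q,D)\supseteq M$, so $\mathrm{vol}(B(q,D))=\mathrm{vol}(M)$. Writing $y(r):=\mathrm{vol}(B(q,r))/\mathrm{vol}(\mathcal{B}_{n}(q^{-},r))$ as in the proof of Theorem \ref{theorem2-1}, the inequality we want is exactly $y(r)/y(D)\geq\alpha$ for every $r<D$.

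Next I would apply Theorem \ref{theorem2-1} with the inner radius $r$ and outer radius $R=D$, obtaining
\begin{equation*}
y(D)^{1/(2p)}-y(r)^{1/(2p)}\;\leq\;c(n,p,D)\bigl(k_{-}(p,q,\lambda(t),D)\bigr)^{1/2}.
\end{equation*}
Now I convert $k_{-}$ to $\overline{k_{-}}$ using the identity $k_{-}^{p}=\mathrm{vol}(B(q,D))\cdot\overline{k_{-}}^{p}$ together with $\mathrm{vol}(B(q,D))=y(D)\,\mathrm{vol}(\mathcal{B}_{n}(q^{-},D))$, which gives
\begin{equation*}
\bigl(k_{-}\bigr)^{1/2}=y(D)^{1/(2p)}\,\mathrm{vol}(\mathcal{B}_{n}(q^{-},D))^{1/(2p)}\,\bigl(\overline{k_{-}}\bigr)^{1/2}.
\end{equation*}
Substituting back and dividing both sides by $y(D)^{1/(2p)}>0$ yields the clean estimate
\begin{equation*}
\Bigl(\tfrac{y(r)}{y(D)}\Bigr)^{1/(2p)}\;\geq\;1-c(n,p,D)\,\mathrm{vol}(\mathcal{B}_{n}(q^{-},D))^{1/(2p)}\,\bigl(\overline{k_{-}}(p,q,\lambda(t),D)\bigr)^{1/2}.
\end{equation*}

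Finally, I would choose $\epsilon$ so that the right-hand side is $\geq\alpha^{1/(2p)}$; concretely,
\begin{equation*}
\epsilon\;:=\;\left(\frac{1-\alpha^{1/(2p)}}{c(n,p,D)\,\mathrm{vol}(\mathcal{B}_{n}(q^{-},D))^{1/(2p)}}\right)^{\!2},
\end{equation*}
which depends only on $n,p,q,\lambda(t),D,\alpha$ as required. Raising the previous inequality to the $2p$-th power then gives $y(r)\geq\alpha\,y(D)$, which, after recalling $\mathrm{vol}(B(q,D))=\mathrm{vol}(M)$, is precisely the asserted doubling inequality.

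There is no serious obstacle; the argument is essentially bookkeeping on top of Theorem \ref{theorem2-1}. The only delicate point worth checking is that $c(n,p,D)$ and $\mathrm{vol}(\mathcal{B}_{n}(q^{-},D))$ depend only on the declared parameters, so that the resulting $\epsilon$ is truly independent of the manifold $M$ beyond its radial curvature data; this is clear from formula (\ref{EXP-C}) and the construction of the model $M^{-}$ from the ODE (\ref{ODE}).
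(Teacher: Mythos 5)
Your argument is correct and follows essentially the same route as the paper: apply Theorem \ref{theorem2-1} with outer radius $R=D$, use the fact that $\ell(q)\leq D$ (together with $Cut(q)$ having measure zero) to identify $\mathrm{vol}(B(q,D))=\mathrm{vol}(M)$ and thereby express $k_{-}$ in terms of $\overline{k_{-}}$ and $y(D)$, divide through by $y(D)^{1/(2p)}$, and choose $\epsilon$ so that the error term is absorbed. Two small points are worth recording. First, the paper splits the argument into two cases depending on whether $\ell(q)\leq r$ or $r<\ell(q)$; your version subsumes the first case automatically, since when $\ell(q)\leq r<D$ one has $y(D)^{1/(2p)}-y(r)^{1/(2p)}\leq 0$ and the chain of inequalities remains valid, so no case distinction is actually necessary. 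Second, and more substantively, your threshold uses $1-\alpha^{1/(2p)}$ in the numerator, which is what the argument genuinely requires: forcing $\bigl(y(r)/y(D)\bigr)^{1/(2p)}\geq\alpha^{1/(2p)}$ yields $y(r)\geq\alpha\,y(D)$ after raising to the $2p$-th power. The paper's proof instead chooses $\epsilon$ with $1-\alpha$ in this slot, which at face value only gives $\alpha\,(\cdot)^{1/(2p)}\leq(\cdot)^{1/(2p)}$ and hence $\alpha^{2p}\,(\cdot)\leq(\cdot)$ rather than the stated $\alpha\,(\cdot)\leq(\cdot)$. Since $\alpha<1$ and $2p>1$ that factor is strictly weaker, so your choice of exponent actually closes a small wrinkle in the paper's own derivation (of course the statement still holds as phrased, by a relabeling $\alpha\mapsto\alpha^{1/(2p)}$).
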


\begin{remark}
\rm{(1) The assumption $\ell(q)\leq D$ makes sure that $M\backslash
Cut(q)$ is bounded, which implies $\mathrm{vol}\left(M\backslash
Cut(q)\right)$ is finite. However, the complete manifold $M$ may be
unbounded. One can construct an interesting example as follows:

\begin{itemize}

\item Let $\mathfrak{C}$ be an $n$-dimensional cylinder embedded in
$\mathbb{R}^{n+1}$ and having finite height. For any point
$q\in\mathfrak{C}$, its cut-locus is a segment, consisting of its
antipodal points, which is actually a generatrix of $\mathfrak{C}$.
Extending this segment to be a line, and then the union of
$\mathfrak{C}$ and this line, denoting by $\mathcal{M}$, is a
complete manifold (still under the induced metric of $\mathfrak{C}$)
and is unbounded. Clearly, on $\mathcal{M}$, the cut-locus of $q$ is
just the line, and $\ell(q)<\infty$ is finite. Besides, it is easy
to see that $\mathrm{vol}(\mathcal{M})=\mathrm{vol}(\mathfrak{C})$.

\end{itemize}
Since $Cut(q)$ is a closed set of zero $n$-Hausdorff measure, one
has $\mathrm{vol}(M)=\mathrm{vol}\left(M\backslash Cut(q)\right)$,
which is finite, and this shows that the conclusion of Corollary
\ref{corollary3-6} makes sense. \\
(2) Since $\ell(q)\leq D$, $B(q,D)$ covers $M\backslash Cut(q)$,
which leads to the fact
\begin{eqnarray*}
\overline{k_{-}}\left(p,q,\lambda(t),D\right)=\left(\frac{1}{\mathrm{vol}(M)}\int_{M\backslash
Cut(q)}\rho^{p}dv\right)^{1/p}=\left(\frac{1}{\mathrm{vol}(M)}\int_{M}\rho^{p}dv\right)^{1/p}.
\end{eqnarray*}
 }
\end{remark}

\begin{proof}
We divide the proof into two cases as follows:

Case 1. If $\ell(q)\leq r<D$, then $B(q,r)$ covers $M\backslash
Cut(q)$, which implies $\mathrm{vol}(B(q,r))=\mathrm{vol}(M)$.
Hence, for $0<\alpha<1$, the inequality
\begin{eqnarray*}
\alpha\cdot\frac{\mathrm{vol}\left(\mathcal{B}_{n}(q^{-},r)\right)}{\mathrm{vol}\left(\mathcal{B}_{n}(q^{-},D)\right)}\leq\frac{\mathrm{vol}(B(q,r))}{\mathrm{vol}(M)}=1
\end{eqnarray*}
holds \emph{trivially}.

Case 2. If $r<\ell(q)\leq D$, then $B(q,D)$ covers $M\backslash
Cut(q)$, and $\mathrm{vol}(B(q,D))=\mathrm{vol}(M)$. By Theorem
\ref{theorem2-1}, one can get
\begin{eqnarray*}
\left(\frac{\mathrm{vol}(B(q,D))}{\mathrm{vol}\left(\mathcal{B}_{n}(q^{-},D)\right)}\right)^{\frac{1}{2p}}-\left(\frac{\mathrm{vol}(B(q,r))}{\mathrm{vol}\left(\mathcal{B}_{n}(q^{-},r)\right)}\right)^{\frac{1}{2p}}&\leq&
c(n,p,D)\cdot\left(k_{-}\left(p,q,\lambda(t),D\right)\right)^\frac{1}{2}\\
&=& c(n,p,D)\cdot\left(\int_{M\backslash
Cut(q)}\rho^{p}dv\right)^\frac{1}{2p},
\end{eqnarray*}
which implies
\begin{eqnarray} \label{3-6-1}
&&\left(\frac{\mathrm{vol}\left(\mathcal{B}_{n}(q^{-},r)\right)}{\mathrm{vol}\left(\mathcal{B}_{n}(q^{-},D)\right)}\right)^{\frac{1}{2p}}-\left(\frac{\mathrm{vol}(B(q,r))}{\mathrm{vol}(M)}\right)^{\frac{1}{2p}}\nonumber\\
&&\qquad \qquad \leq
c(n,p,D)\cdot\left(\mathrm{vol}\left(\mathcal{B}_{n}(q^{-},r)\right)\right)^{\frac{1}{2p}}\cdot\left(\frac{1}{\mathrm{vol}(M)}\int_{M\backslash
Cut(q)}\rho^{p}dv\right)^{\frac{1}{2p}}.
\end{eqnarray}
Choose $\epsilon$ to be a positive constant such that
\begin{eqnarray*}
\left(c(n,p,D)\right)^{2p}\cdot\epsilon^{p}\leq\frac{(1-\alpha)^{2p}}{\mathrm{vol}\left(\mathcal{B}_{n}(q^{-},D)\right)}.
\end{eqnarray*}
If
\begin{eqnarray*}
\overline{k_{-}}\left(p,q,\lambda(x),D\right)=\left(\frac{1}{\mathrm{vol}(M)}\int_{M\backslash
Cut(q)}\rho^{p}dv\right)^{1/p}\leq\epsilon,
\end{eqnarray*}
then, by (\ref{3-6-1}), one has
\begin{eqnarray*}
&&\left(\frac{\mathrm{vol}\left(\mathcal{B}_{n}(q^{-},r)\right)}{\mathrm{vol}\left(\mathcal{B}_{n}(q^{-},D)\right)}\right)^{\frac{1}{2p}}-\left(\frac{\mathrm{vol}(B(q,r))}{\mathrm{vol}(M)}\right)^{\frac{1}{2p}}\nonumber\\
&&\qquad \qquad \leq
\epsilon^{-\frac{1}{2}}\cdot\frac{1-\alpha}{\left(\mathrm{vol}\left(\mathcal{B}_{n}(q^{-},D)\right)\right)^{\frac{1}{2p}}}\cdot\left(\mathrm{vol}\left(\mathcal{B}_{n}(q^{-},r)\right)\right)^{\frac{1}{2p}}\cdot\left(\frac{1}{\mathrm{vol}(M)}\int_{M\backslash
Cut(q)}\rho^{p}dv\right)^{\frac{1}{2p}}\\
&&\qquad \qquad \leq
(1-\alpha)\cdot\left(\frac{\mathrm{vol}\left(\mathcal{B}_{n}(q^{-},r)\right)}{\mathrm{vol}\left(\mathcal{B}_{n}(q^{-},D)\right)}\right)^{\frac{1}{2p}},
\end{eqnarray*}
which implies
\begin{eqnarray*}
\alpha\cdot\frac{\mathrm{vol}\left(\mathcal{B}_{n}(q^{-},r)\right)}{\mathrm{vol}\left(\mathcal{B}_{n}(q^{-},D)\right)}\leq\frac{\mathrm{vol}(B(q,r))}{\mathrm{vol}(M)}.
\end{eqnarray*}

The conclusion of Corollary \ref{corollary3-6} follows by summing up
Cases 1 and 2 directly.
\end{proof}

We can also get a local version of Theorem \ref{theorem2-1}, which
can be seen as an extension of \cite[Theorem 2.1]{pw2},  as follows:

\begin{corollary} \label{corollary3-8}
Given $0<\alpha<1$, under assumptions of Theorem \ref{theorem2-1},
there exists an $\epsilon=\epsilon(n,p,\alpha)$ such that if
$R^{2}\cdot\overline{k_{-}}\left(p,q,0,R\right)<\epsilon$, then for
$r_{1}<r_{2}\leq R$, the following relative volume comparison
\begin{eqnarray*}
\alpha\cdot\left(\frac{r_{1}}{r_{2}}\right)^{n}\leq\frac{\mathrm{vol}(B(q,r_{1}))}{\mathrm{vol}(B(q,r_{2}))}
\end{eqnarray*}
holds.
\end{corollary}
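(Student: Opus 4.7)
The plan is to apply Theorem \ref{theorem2-1} twice---once on the pair $(r_1,r_2)$ and once on $(r_2,R)$---and close the loop by a short bootstrap. Set
\[
y(r):=\frac{\mathrm{vol}(B(q,r))}{\mathrm{vol}(\mathcal{B}_n(q^-,r))},
\]
so the target inequality $\alpha(r_1/r_2)^n\leq \mathrm{vol}(B(q,r_1))/\mathrm{vol}(B(q,r_2))$ is equivalent to $\alpha\,y(r_2)\leq y(r_1)$. Specialising $\lambda(t)\equiv 0$, the system (\ref{ODE}) gives $f(t)=t$, hence $\mathrm{vol}(\mathcal{B}_n(q^-,s))=\frac{w_n}{n}s^n$ and $c_1(n,s)=n$. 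Using $p>n/2$ to integrate $s^{-n/(2p)}$ in (\ref{EXP-C}) yields an explicit constant of the form $c(n,p,r)=C_0(n,p)\,r^{1-n/(2p)}$, with $1-n/(2p)>0$.

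Next I convert the averaged hypothesis into the integral form used in Theorem \ref{theorem2-1}. For any $\tilde r\leq R$, $B(q,\tilde r)\subseteq B(q,R)$ gives
\[
k_-(p,q,0,\tilde r)^{1/2}\leq\overline{k_-}(p,q,0,R)^{1/2}\cdot\mathrm{vol}(B(q,R))^{1/(2p)}=(w_n/n)^{1/(2p)}\,\overline{k_-}^{1/2}\,R^{n/(2p)}\,y(R)^{1/(2p)},
\]
where I write $\overline{k_-}=\overline{k_-}(p,q,0,R)$. Applying Theorem \ref{theorem2-1} on $(r_1,r_2)$ and on $(r_2,R)$, and bounding $r_2^{1-n/(2p)}R^{n/(2p)}\leq R$ via $r_2\leq R$, I obtain the two inequalities
\[
y(r_2)^{1/(2p)}-y(r_1)^{1/(2p)}\leq C_1(n,p)\,R\,\overline{k_-}^{1/2}\,y(R)^{1/(2p)},
\]
\[
y(R)^{1/(2p)}\bigl(1-C_1(n,p)\,R\,\overline{k_-}^{1/2}\bigr)\leq y(r_2)^{1/(2p)},
\]
with $C_1(n,p):=C_0(n,p)(w_n/n)^{1/(2p)}$.

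The main obstacle is the presence of the unknown $y(R)$ on the right-hand side of the first inequality; the second inequality is exactly what is needed to eliminate it. Choosing $\epsilon$ small enough that $C_1(n,p)\sqrt{\epsilon}\leq 1/2$, the hypothesis $R^2\overline{k_-}<\epsilon$ and the second inequality yield $y(R)^{1/(2p)}\leq 2\,y(r_2)^{1/(2p)}$, and substitution into the first gives the self-contained bound
\[
y(r_2)^{1/(2p)}\bigl(1-2C_1(n,p)\,R\,\overline{k_-}^{1/2}\bigr)\leq y(r_1)^{1/(2p)}.
\]

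Finally, I choose $\epsilon=\epsilon(n,p,\alpha)>0$ so that in addition $2C_1(n,p)\sqrt{\epsilon}\leq 1-\alpha^{1/(2p)}$; concretely $\epsilon:=(1-\alpha^{1/(2p)})^2/(4C_1(n,p)^2)$ works and is automatically strictly smaller than $1/(4C_1(n,p)^2)$, so both constraints are satisfied. Then $\alpha^{1/(2p)}y(r_2)^{1/(2p)}\leq y(r_1)^{1/(2p)}$, hence $\alpha\,y(r_2)\leq y(r_1)$, which is the desired local relative volume comparison. The scale-invariant form of the hypothesis $R^2\overline{k_-}<\epsilon$ is natural and is forced precisely by the bootstrap step above.
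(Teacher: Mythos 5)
Your proof is correct and follows essentially the same route as the paper: specialize to $\lambda\equiv 0$ so the model is Euclidean, apply the Theorem~\ref{theorem2-1} machinery to the pairs $(r_1,r_2)$ and $(r_2,R)$, and use the $(r_2,R)$ instance as a bootstrap to eliminate the dependence on the unknown ratio at scale $R$ (the paper phrases this as bounding $(r_2^n/\mathrm{vol}(B(q,r_2)))^{1/(2p)}$ by $2(R^n/\mathrm{vol}(B(q,R)))^{1/(2p)}$, which is the same step as your $y(R)^{1/(2p)}\leq 2\,y(r_2)^{1/(2p)}$). If anything, your version is slightly cleaner in its bookkeeping of constants: you invoke Theorem~\ref{theorem2-1} as a black box and track everything through $y(r)$, whereas the paper re-derives the estimate from Lemmas~\ref{lemma2-2}--\ref{lemma2-4} and works with $(\mathrm{vol}(B(q,r))/r^n)^{1/(2p)}$, but the two are related by the harmless normalization factor $(w_n/n)^{1/(2p)}$ and the logical structure is identical.
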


\begin{proof}
Since $\lambda(t)\equiv0$, $f(t)=t$ for $t>0$, and the spherically
symmetric $n$-manifold degenerates into $\mathbb{R}^n$. In this
case,
$\mathrm{vol}\left(\mathcal{B}_{n}(q^{-},r)\right)=\mathrm{vol}\left(\mathcal{B}_{n}(r)\right)=\frac{w_{n}r^{n}}{n}$.
As shown in the proof of Theorem \ref{theorem2-1}, by using Lemmas
\ref{lemma2-2}, \ref{lemma2-4}, and choosing $\lambda(t)\equiv0$,
one can obtain
\begin{eqnarray*}
\left(y(r_2)\right)^{\frac{1}{2p}}-
\left(y(r_1)\right)^{\frac{1}{2p}}&\leq& n\cdot
c_{2}(n,p)\int_{r_1}^{r_2}\left(\mathrm{vol}\left(\mathcal{B}_{n}(s)\right)\right)^{-\frac{1}{2p}}ds
\cdot\left(k_{-}\left(p,q,0,R\right)\right)^\frac{1}{2}\\
&\leq&n\cdot
c_{2}(n,p)\int_{0}^{R}\left(\mathrm{vol}\left(\mathcal{B}_{n}(s)\right)\right)^{-\frac{1}{2p}}ds
\cdot\left(k_{-}\left(p,q,0,R\right)\right)^\frac{1}{2}\\
&=&\frac{n^{1+\frac{1}{2p}}}{2p-n}\cdot
w_{n}^{-\frac{1}{2p}}c_{2}(n,p)\cdot\left(k_{-}\left(p,q,0,R\right)\right)^\frac{1}{2}R^{1-\frac{n}{2p}},
\end{eqnarray*}
where, similar as before,
$y(r)=\mathrm{vol}(B(q,r))/\mathrm{vol}\left(\mathcal{B}_{n}(r)\right)$.
Furthermore, one has
\begin{eqnarray} \label{3-8-1}
\left(\frac{\mathrm{vol}(B(q,r_2))}{r_{2}^{n}}\right)^{\frac{1}{2p}}-\left(\frac{\mathrm{vol}(B(q,r_1))}{r_{1}^{n}}\right)^{\frac{1}{2p}}
\leq\frac{n}{2p-n}\cdot
c_{2}(n,p)\cdot\left(k_{-}\left(p,q,0,R\right)\right)^\frac{1}{2}R^{1-\frac{n}{2p}},
\end{eqnarray}
which implies
\begin{eqnarray} \label{3-8-2}
\left(\frac{r_{1}}{r_{2}}\right)^{\frac{n}{2p}}-\left(\frac{\mathrm{vol}(B(q,r_{1}))}{\mathrm{vol}(B(q,r_{2}))}\right)^{\frac{1}{2p}}&\leq&
\left(\frac{r_{1}^n}{\mathrm{vol}(B(q,r_{1}))}\right)^{\frac{1}{2p}}\frac{n}{2p-n}\cdot
c_{2}(n,p)\cdot\left(k_{-}\left(p,q,0,R\right)\right)^\frac{1}{2}R^{1-\frac{n}{2p}}\nonumber\\
&=&c_{3}(n,p,r_{2},R)\cdot\left(\frac{r_{1}}{r_{2}}\right)^{\frac{n}{2p}},
\end{eqnarray}
where
\begin{eqnarray*}
c_{3}(n,p,r_{2},R):=\left(\frac{r_{2}^n}{\mathrm{vol}(B(q,r_{1}))}\right)^{\frac{1}{2p}}\frac{n}{2p-n}\cdot
c_{2}(n,p)\cdot\left(k_{-}\left(p,q,0,R\right)\right)^\frac{1}{2}R^{1-\frac{n}{2p}}.
\end{eqnarray*}
Clearly, $c_{3}(n,p,r_{2},R)$ can be seen as a function of $r_2$,
which is non-decreasing. From (\ref{3-8-2}), it follows that
\begin{eqnarray} \label{3-8-3}
\left(1-c_{3}(n,p,r_{2},R)\right)\left(\frac{r_{1}}{r_{2}}\right)^{\frac{n}{2p}}\leq\left(\frac{\mathrm{vol}(B(q,r_{1}))}{\mathrm{vol}(B(q,r_{2}))}\right)^{\frac{1}{2p}}.
\end{eqnarray}
Replacing $r_1$, $r_2$ in (\ref{3-9-1}) by $r_2$, $R$, we have
\begin{eqnarray*}
\left(\frac{r_{2}^{n}}{\mathrm{vol}(B(q,r_2))}\right)^{\frac{1}{2p}}&\leq&\left(\left(\frac{\mathrm{vol}(B(q,R))}{R^{n}}\right)^{\frac{1}{2p}}-\frac{n}{2p-n}\cdot
c_{2}(n,p)\cdot\left(k_{-}\left(p,q,0,R\right)\right)^\frac{1}{2}R^{1-\frac{n}{2p}}\right)^{-1}\\
&=&\left(\frac{R^{n}}{\mathrm{vol}(B(q,R))}\right)^{\frac{1}{2p}}\left(1-\frac{n}{2p-n}\cdot
c_{2}(n,p)\cdot\left(\overline{k_{-}}\left(p,q,0,R\right)\right)^\frac{1}{2}R\right)^{-1}\\
&\leq&\left(\frac{R^{n}}{\mathrm{vol}(B(q,R))}\right)^{\frac{1}{2p}}\left(1-\frac{n}{2p-n}\cdot
c_{2}(n,p)\epsilon^\frac{1}{2}\right)^{-1}\\
&\leq&2\left(\frac{R^{n}}{\mathrm{vol}(B(q,R))}\right)^{\frac{1}{2p}}
\end{eqnarray*}
if $\epsilon\leq\left(\frac{2p-n}{2n\cdot
c_{2}(n,p)}\right)^{2}\cdot\left(1-\alpha^{\frac{1}{2p}}\right)^2$.
Hence, one has
 \begin{eqnarray*}
1-c_{3}(n,p,r_{2},R)&\geq&1-2\left(\frac{R^{n}}{\mathrm{vol}(B(q,R))}\right)^{\frac{1}{2p}}\frac{n}{2p-n}\cdot
c_{2}(n,p)\cdot\left(k_{-}\left(p,q,0,R\right)\right)^\frac{1}{2}R^{1-\frac{n}{2p}}\\
&=&1-\frac{2n}{2p-n}\cdot
c_{2}(n,p)\cdot\left(\overline{k_{-}}\left(p,q,0,R\right)\right)^\frac{1}{2}R\\
&\geq&1-\frac{2n}{2p-n}\cdot c_{2}(n,p)\cdot\epsilon^{\frac{1}{2}}\\
&\geq&1-\left(1-\alpha^{\frac{1}{2p}}\right)\\
&=&\alpha^{\frac{1}{2p}}.
 \end{eqnarray*}
Substituting the above inequality into (\ref{3-8-2}) yields
\begin{eqnarray*}
\alpha^{\frac{1}{2p}}\left(\frac{r_{1}}{r_{2}}\right)^{\frac{n}{2p}}\leq(1-c_{3}(n,p,r_{2},R))\left(\frac{r_{1}}{r_{2}}\right)^{\frac{n}{2p}}
\leq\left(\frac{\mathrm{vol}(B(q,r_{1}))}{\mathrm{vol}(B(q,r_{2}))}\right)^{\frac{1}{2p}},
\end{eqnarray*}
which implies the conclusion of Corollary \ref{corollary3-8}
directly.
\end{proof}

Now, we would like to give an upper bound for the volume of normal
tubes around geodesics.

\begin{theorem} \label{theorem3-8}
If $N\subset M$ is a geodesic of the given complete Riemannian
$n$-manifold $M$, $n>2$, $p>n-1$, then the volume of the normal tube
of radius $R$ around  $N$ can be estimated as follows
\begin{eqnarray*}
\mathrm{vol}\left(\mathcal{T}(N,R)\right)\leq(a+b)^{n-2}\int_{0}^{R}\left(\frac{e^{bt}-1}{b}\right)^{n-2}dt,
\end{eqnarray*}
where $\mathcal{T}(N,R)$ is defined as (\ref{rem-EXTRA}),
\begin{eqnarray*}
&&a=\left(\mathbb{L}_{N}\cdot\mathrm{vol}(\mathbb{S}^{n-2})\right)^{\frac{1}{n-2}},\\
&& b= \left(c_{4}(n,p)\right)^{\frac{1}{2p-1}}\cdot\left(
k^{\ast}_{+}(p,N,0,R)\right)^{\frac{p}{2p-1}},
\end{eqnarray*}
with
\begin{eqnarray*}
&&\mathbb{L}_{N}~\mathrm{the~length~of~the~geodesic}~N, \\
&&c_{4}(n,p)=\frac{(2p-1)^{p}(p-1)^{p}}{(n-2)^{p}p^{p}(2p-n+1)^{p-1}}\left[2\left(\frac{2p-1}{p+1-n}\right)^{p}+1\right],\\
&&
k^{\ast}_{+}(p,N,0,R)=\left(\int_{\mathcal{T}(N,R)}\left|\min\left\{0,K(V,v_{x})\right\}\right|^{p}dv\right)^{1/p}.
\end{eqnarray*}
\end{theorem}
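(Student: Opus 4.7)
I would begin by setting up Fermi coordinates on the tube. Let $\gamma_{N}:[0,\mathbb{L}_{N}]\to N$ be an arc-length parametrization of the geodesic $N$, and fix a parallel orthonormal frame of the normal bundle $\upsilon(N)$ along $\gamma_{N}$, so that each unit normal vector corresponds to some $\theta\in\mathbb{S}^{n-2}$. Then, away from the cut locus of $N$, each point $x\in\mathcal{T}(N,R)$ has a unique representation $x=\exp_{\gamma_{N}(s)}(tE(s,\theta))$ with $(s,\theta,t)\in[0,\mathbb{L}_{N}]\times\mathbb{S}^{n-2}\times[0,R)$, and
\begin{eqnarray*}
\mathrm{vol}(\mathcal{T}(N,R))=\int_{0}^{\mathbb{L}_{N}}\int_{\mathbb{S}^{n-2}}\int_{0}^{R}J(s,\theta,t)\,dt\,d\theta\,ds,
\end{eqnarray*}
where $J(s,\theta,t)=\sqrt{\|g\|}$ in these coordinates. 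Because $N$ is a geodesic, its second fundamental form vanishes, so the Jacobi fields along the normal geodesic $t\mapsto\exp_{\gamma_{N}(s)}(tE(s,\theta))$ split into one ``parallel'' field (from varying $s$) with $Y_{0}(0)=\gamma_{N}'(s)$ and $\nabla_{t}Y_{0}(0)=0$, and $n-2$ ``angular'' fields (from varying $\theta$) with $Y_{i}(0)=0$ and $\nabla_{t}Y_{i}(0)$ orthonormal in $T_{\theta}\mathbb{S}^{n-2}$. A first-order Fermi expansion then gives $J(s,\theta,t)=t^{n-2}+O(t^{n-1})$, matching the flat model and motivating a comparison of $J^{1/(n-2)}$ against $t$.

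Mimicking the excess-function strategy of Lemmas \ref{lemma2-2} and \ref{lemma2-4}, I would introduce
\begin{eqnarray*}
\psi(s,\theta,t):=\max\left\{0,\,\frac{\partial_{t}J}{J}-\frac{n-2}{t}\right\},
\end{eqnarray*}
and combine the Jacobi equation $Y''+R(Y,\partial_{t})\partial_{t}=0$ with the definition of $\widetilde{\mu}$ at $\lambda\equiv 0$ to produce a Riccati-type differential inequality
\begin{eqnarray*}
\psi'+\frac{\psi^{2}}{n-2}+\frac{2\psi}{t}\leq \widetilde{\mu}
\end{eqnarray*}
along each normal geodesic. Multiplying by $\psi^{2p-2}J$, integrating by parts over $[0,R]$ (the boundary term at $t=0$ vanishes since $J\sim t^{n-2}$), and applying Young's inequality under the hypothesis $p>n-1$ (which makes the key coefficients $\tfrac{1}{n-2}-\tfrac{1}{2p-1}$ and $\tfrac{p+1-n}{2p-1}$ strictly positive), I would obtain the geodesic-wise bound
\begin{eqnarray*}
\int_{0}^{R}\psi^{2p}J\,dt\leq c_{4}(n,p)\int_{0}^{R}\widetilde{\mu}^{p}J\,dt,
\end{eqnarray*}
with $c_{4}(n,p)$ emerging directly from the Young constants in the stated closed form.

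To convert this into a volume bound I would set $V(t):=\int_{0}^{\mathbb{L}_{N}}\int_{\mathbb{S}^{n-2}}J(s,\theta,t)\,d\theta\,ds$ and $W(t):=V(t)^{1/(n-2)}$; the Taylor expansion gives $W(0)=0$ and $W(t)=at+O(t^{2})$ with $a=(\mathbb{L}_{N}\cdot\mathrm{vol}(\mathbb{S}^{n-2}))^{1/(n-2)}$. Combining $\partial_{t}J\leq(\tfrac{n-2}{t}+\psi)J$ with Hölder's inequality in the transverse variables $(s,\theta)$, invoking the geodesic-wise $L^{2p}$ bound above together with the definition of $k_{+}^{\ast}(p,N,0,R)$, and simplifying would yield a scalar linear differential inequality
\begin{eqnarray*}
W'(t)\leq bW(t)+(a+b),\qquad W(0)=0,
\end{eqnarray*}
with $b=(c_{4}(n,p))^{1/(2p-1)}(k_{+}^{\ast}(p,N,0,R))^{p/(2p-1)}$, where the fractional exponents arise from the Hölder balancing of the $L^{2p}$ norm of $\psi$ against the other factors. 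Gronwall's inequality then gives $W(t)\leq(a+b)(e^{bt}-1)/b$, and raising both sides to the $(n-2)$-nd power and integrating over $t\in[0,R]$ produces exactly the stated bound on $\mathrm{vol}(\mathcal{T}(N,R))$. The main obstacle is the scalar-inequality step: one has to track Hölder and Young exponents with enough care for $c_{4}(n,p)$ and the fractional powers $1/(2p-1)$, $p/(2p-1)$ in $b$ to appear in the stated closed form, and one must arrange the $\tfrac{n-2}{t}V$ term in $V'$ to assemble cleanly with the $\int\!\!\int\psi J$ term into the linear Gronwall form $W'\leq bW+(a+b)$.
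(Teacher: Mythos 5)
Your proposal follows essentially the same route as the paper: Fermi coordinates along $N$, the tube-area function $\mathcal{V}(t)$ with $\mathcal{V}(0)=0$, a Riccati--Young--H\"older estimate producing the key differential inequality, then linearization and Gronwall--type integration (the paper simply cites the computation of Petersen--Shteingold--Wei on pages 1015--1019 of \cite{pw11} for the step you reconstruct, arriving at $\mathcal{V}'\leq a+b\mathcal{V}^{\delta}$ with $\delta=\frac{2p-n+1}{2p-1}\in(0,1)$). The only elision worth noting is that H\"older balancing yields the sublinear inequality $W'\leq a+bW^{\delta}$ rather than the linear one directly; the linear form $W'\leq (a+b)+bW$ then follows from $W^{\delta}\leq 1+W$, exactly as in the paper.
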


\begin{proof}
Define a quantity $\ell(N)$ as follows
\begin{eqnarray*}
\ell(N):=\sup\limits_{q\in N}\ell(q).
\end{eqnarray*}
As mentioned in (2) of Remark \ref{remark2-1}, one can define
Type-II integral radial sectional curvature w.r.t. $N$ as follows
\begin{eqnarray} \label{DEF-ISK}
k_{+}^{\ast}\left(p,N,\lambda(t),R\right)=\left(\int_{\mathcal{T}(N,R)}\left|\min\left\{0,K(V,v_{x})-\lambda(t)\right\}\right|^{p}dv\right)^{1/p},
\qquad \forall R>0.
\end{eqnarray}
Clearly, if $R\geq\ell(N)$, then $\mathcal{T}(N,R)$ covers $M$
completely, and in this situation, we have
\begin{eqnarray*}
k_{+}^{\ast}\left(p,N,\lambda(t),R\right)=\left(\int_{M}\left|\min\left\{0,K(V,v_{x})-\lambda(t)\right\}\right|^{p}dv\right)^{1/p}:=k^{\ast}_{+}(p,N,\lambda(t)).
\end{eqnarray*}
Hence, one has
\begin{eqnarray*}
k_{+}^{\ast}\left(p,N,\lambda(t),R\right)\leq
k^{\ast}_{+}(p,N,\lambda(t)), \qquad \forall R>0.
\end{eqnarray*}
Since $N$ is a geodesic, by a parallel moving along $N$,  one can
set up a coordinates system $(t,s,\theta_{n-2})$  for
$\mathcal{T}(N,R)$, where $t=d_{M}(N,\cdot)$ denotes the radial
distance to $N$, $s$ is the arc-length parameter of $N$, and
$\theta_{n-2}\in\mathbb{S}^{n-2}$ is the angular parameter from the
unit normal bundle. Write the Riemannian volume element as
$dv=wdtdsd\theta_{n-2}$. Define
$(\mathcal{V}(r))^{n-2}=\int_{\mathbb{S}^{n-2}}\int_{N}wdsd\theta_{n-2}=$
area of level set $t=r$. One checks \cite[pages 1015-1019]{pw11}
carefully and can easily find that only radial curvatures have been
used in the argument therein. Therefore, by using a similar argument
as that in \cite[pages 1015-1019]{pw11}, one has
 \begin{eqnarray} \label{SOLD}
&&\mathcal{V}'\leq a+b\mathcal{V}^{\delta},\\
&&\quad \mathcal{V}(0)=0, \nonumber\\
&&\delta=\frac{2p-n+1}{2p-1}\in(0,1), \nonumber
 \end{eqnarray}
which implies
\begin{eqnarray*}
\mathcal{V}'\leq(a+b)+b\mathcal{V}, \qquad \mathcal{V}(0)=0.
\end{eqnarray*}
Solving the above differential inequality yields
\begin{eqnarray*}
\mathcal{V}(t)\leq(a+b)\left(\frac{e^{bt}-1}{b}\right).
\end{eqnarray*}
Together with the fact
\begin{eqnarray*}
\mathrm{vol}\left(\mathcal{T}(N,R)\right)=\int_{0}^{R}\mathcal{V}^{n-2}(t)dt,
\end{eqnarray*}
one can get the conclusion of Theorem \ref{theorem3-8} directly.
\end{proof}

Applying Theorem \ref{theorem3-8}, we can give an upper bound for
$\mathrm{vol}\left(\mathcal{T}(N,R)\right)$ in terms of
$k^{\ast}_{+}(p,N,\lambda(t))$, where
$\lambda(\widehat{r}(x))=\lambda(t)$ is a continuous function
defined on $M\setminus\left(Cut(N)\cup N\right)$, with
$Cut(N):=\bigcup\limits_{q\in N}Cut(q)$ and, as before,
$t=d_{M}(N,x)$ the Riemannian distance from $x$ to $N$. In order to
state our conclusion clearly, we need a concept. We say that a
function $\lambda$ on a given Riemannian manifold $M$ satisfies a
property \textbf{P2} if
\begin{itemize}

\item The function $\lambda(t)$ is continuous, non-positive on
$M\backslash\left(Cut(N)\cup N\right)$, where $N\subset M$ is a
given submanifold of $M$, and, as mentioned in (2) of Remark
\ref{remark2-1}, $t=\widehat{r}(x)=d_{M}(N,x)$ denotes the
Riemannian distance, on $M$, from the point $x$ to $N$. Besides, it
can be extended such that the extension is continuous (only w.r.t.
the distance parameter also), non-positive on $M$.

\end{itemize}

\begin{theorem} \label{theorem3-9}
Under the assumptions of Theorem \ref{theorem3-8} and the assumption
that $\lambda(t)$ is a function on $M$ satisfying the property
\textbf{P2}, we have
\begin{eqnarray*}
\mathrm{vol}\left(\mathcal{T}(N,R)\right)\leq\digamma(n,p,\mathbb{L}_{N},\lambda_{\inf},k_{+}^{\ast}(N),R),
\end{eqnarray*}
where $k_{+}^{\ast}(N):=k_{+}^{\ast}\left(p,N,\lambda(t),R\right)$,
given by (\ref{DEF-ISK}), is the Type-II integral radial sectional
curvature w.r.t. $N$, $\lambda_{\inf}:=\inf_{(0,R)}\lambda(t)\leq0$,
and, as before, $\mathbb{L}_{N}$ is the length of $N$. Moreover, as
$\mathbb{L}_{N}, k_{+}^{\ast}(N)\rightarrow0$, we have
$\digamma(n,p,\mathbb{L}_{N},\lambda_{\inf},k_{+}^{\ast}(N),R)\rightarrow0$.
\end{theorem}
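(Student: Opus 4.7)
The approach is to reduce Theorem \ref{theorem3-9} to Theorem \ref{theorem3-8} by replacing the reference curvature $0$ in the latter by $\lambda(t)$, absorbing the difference into an additive error of order $|\lambda_{\inf}|\cdot\mathrm{vol}(\mathcal{T}(N,R))^{1/p}$. Property \textbf{P2} forces $\lambda(t)\leq 0$ and hence $\lambda_{\inf}\leq 0$, so this correction vanishes whenever the volume of the tube does, which will drive the limit behaviour $\digamma\to 0$ as $\mathbb{L}_{N},k_{+}^{\ast}(N)\to 0$.

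The first step is a pointwise comparison. For any $x\in\mathcal{T}(N,R)\setminus Cut(N)$ and $V\in S_{x}^{n-1}$ with $V\perp v_{x}$, splitting
\begin{eqnarray*}
-K(V,v_{x})=\bigl(\lambda(\widehat{r}(x))-K(V,v_{x})\bigr)+\bigl(-\lambda(\widehat{r}(x))\bigr)
\end{eqnarray*}
and applying the elementary subadditivity $\max\{0,\alpha+\beta\}\leq\max\{0,\alpha\}+\max\{0,\beta\}$ together with $0\leq -\lambda(\widehat{r}(x))\leq -\lambda_{\inf}$ yields
\begin{eqnarray*}
\bigl|\min\{0,K(V,v_{x})\}\bigr|\leq\bigl|\min\{0,K(V,v_{x})-\lambda(\widehat{r}(x))\}\bigr|+(-\lambda_{\inf}).
\end{eqnarray*}
Raising this to the $p$-th power, integrating over $\mathcal{T}(N,R)$, and invoking Minkowski's inequality then produces
\begin{eqnarray*}
k_{+}^{\ast}(p,N,0,R)\leq k_{+}^{\ast}(N)+(-\lambda_{\inf})\bigl(\mathrm{vol}(\mathcal{T}(N,R))\bigr)^{1/p}.
\end{eqnarray*}

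Next, I would substitute the above estimate into Theorem \ref{theorem3-8}. Writing $V=\mathrm{vol}(\mathcal{T}(N,R))$ and
\begin{eqnarray*}
\widetilde{b}(V):=\bigl(c_{4}(n,p)\bigr)^{\frac{1}{2p-1}}\bigl(k_{+}^{\ast}(N)+(-\lambda_{\inf})V^{1/p}\bigr)^{\frac{p}{2p-1}},
\end{eqnarray*}
the monotonicity of Theorem \ref{theorem3-8}'s bound in $b$ produces the implicit inequality
\begin{eqnarray*}
V\leq\bigl(a+\widetilde{b}(V)\bigr)^{n-2}\int_{0}^{R}\left(\frac{e^{\widetilde{b}(V)t}-1}{\widetilde{b}(V)}\right)^{n-2}dt=:\Phi(V).
\end{eqnarray*}
To extract an explicit $\digamma$ from this, I would rerun the ODE argument leading to \eqref{SOLD} in the proof of Theorem \ref{theorem3-8} with the $r$-dependent upper bound $k_{+}^{\ast}(p,N,0,r)\leq k_{+}^{\ast}(N)+(-\lambda_{\inf})(\mathrm{vol}(\mathcal{T}(N,r)))^{1/p}$ in place of a constant one. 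This yields the coupled differential inequality $\mathcal{V}'(r)\leq a+\widetilde{b}(\mathrm{vol}(\mathcal{T}(N,r)))\mathcal{V}^{\delta}(r)$ with $\mathrm{vol}(\mathcal{T}(N,r))=\int_{0}^{r}\mathcal{V}^{n-2}(s)ds$; ODE comparison with the corresponding companion system (where equality replaces inequality) then furnishes the explicit upper bound $\digamma=\digamma(n,p,\mathbb{L}_{N},\lambda_{\inf},k_{+}^{\ast}(N),R)$. Since $a\to 0$ as $\mathbb{L}_{N}\to 0$ and $\widetilde{b}(V)\to 0$ uniformly on bounded ranges of $V$ as $k_{+}^{\ast}(N)\to 0$, continuous dependence of the companion system's solution on its data delivers the asymptotic vanishing $\digamma\to 0$.

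The main technical obstacle is the self-referential nature of $\Phi$: because $\widetilde{b}(V)$ depends on $V$ and enters exponentially through $e^{\widetilde{b}(V)R}$, treating $V\leq\Phi(V)$ as a naive fixed-point problem is delicate (the graph of $\Phi$ eventually dominates the identity), and the cleanest path is through the coupled ODE above, where one must in particular verify non-blow-up of the companion system on $[0,R]$ and then read off the closed-form expression of $\digamma$ from the comparison solution.
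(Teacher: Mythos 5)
Your overall strategy matches the paper's: split the radial sectional curvature deficit into a part controlled by $k_{+}^{\ast}(N)$ and a part controlled by $|\lambda_{\inf}|$, then feed the resulting bound into the ODE from the proof of Theorem \ref{theorem3-8}. Your pointwise inequality $\bigl|\min\{0,K(V,v_{x})\}\bigr|\leq\bigl|\min\{0,K(V,v_{x})-\lambda\}\bigr|-\lambda_{\inf}$ is correct, and using Minkowski on the $L^{p}$ level rather than the paper's pointwise power inequality $(a+b)^{p}\leq 2^{p-1}(a^{p}+b^{p})$ is a harmless variant. Both routes lead to the same coupled system $\mathcal{V}'\leq a+\widetilde{b}(\mathrm{vol}(\mathcal{T}(N,r)))\mathcal{V}^{\delta}$, and both have to deal with the self-referentiality you flag.

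However, there are two gaps. First, your justification of $\digamma\to 0$ is wrong as stated: you claim $\widetilde{b}(V)\to 0$ uniformly on bounded ranges of $V$ as $k_{+}^{\ast}(N)\to 0$, but $\widetilde{b}(V)=(c_{4})^{1/(2p-1)}\bigl(k_{+}^{\ast}(N)+(-\lambda_{\inf})V^{1/p}\bigr)^{p/(2p-1)}$ contains the term $(-\lambda_{\inf})V^{1/p}$, which stays strictly positive for $V>0$ when $\lambda_{\inf}<0$; so $\widetilde{b}(V)\not\to 0$, and the ``continuous dependence'' step collapses. Second, you leave the coupled ODE abstract and only assert that a comparison solution exists. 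The paper's key move, which you do not identify, is to use the concavity of the exponents (effectively Young's inequality, following the argument of Petersen--Shteingold--Wei) to convert the coupled inequality into a \emph{linear} differential inequality
$\frac{d}{dR}\mathrm{vol}(\mathcal{T}(N,R))\leq\alpha+\beta+\gamma\,\mathrm{vol}(\mathcal{T}(N,R))$,
where $\alpha,\beta$ vanish as $\mathbb{L}_{N},k_{+}^{\ast}(N)\to 0$ while $\gamma$ stays bounded away from zero (it depends only on $a$ and $|\lambda_{\inf}|$). Solving this gives the explicit $\digamma=\frac{\alpha\gamma+\beta}{\gamma^{2}}(e^{\gamma R}-1)-\frac{\beta}{\gamma}R$, from which the asymptotic vanishing of $\digamma$ is an immediate and correct consequence of $\alpha,\beta\to 0$ with $\gamma$ bounded. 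Without this linearization step, both the explicit upper bound and the limiting statement remain unproved in your sketch.
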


\begin{proof}
As in \cite[page 1018]{pw11}, one can define a function
$\sigma:=\max\{0,\min(\mathrm{eign}(-R(\cdot,\frac{d}{dt})\frac{d}{dt}))\}$,
with $\mathrm{eign}(-R(\cdot,\frac{d}{dt})\frac{d}{dt})$ the set of
eigenvalues of the operator $V\mapsto
-R(V,\frac{d}{dt})\frac{d}{dt}$, where for any $x\in
M\setminus\left(Cut(N)\cup N\right)$, $\frac{d}{dt}|_{x}=v_{x}$,
$V\perp v_{x}$, $V\in S_{x}^{n-1}\subseteq T_{x}M$. Clearly, for any
point $x$, $\min(\mathrm{eign}(-R(\cdot,\frac{d}{dt})\frac{d}{dt}))$
is actually the \emph{minimal} eigenvalue of  $-K(V,v_{x})$, with,
as before, $K(V,v_{x})$ the radial sectional curvature at $x$. Then
we have $\sigma\leq\max\{\sigma+\lambda(t),0\}-\lambda(t)$, which
implies
\begin{eqnarray} \label{3-9-1}
\sigma^{p}\leq2^{p-1}\left((\max\{\sigma+\lambda(t),0\})^{p}+|\lambda(t)|^{p}\right).
\end{eqnarray}
Combining (\ref{3-9-1}) with (\ref{SOLD}), we have
\begin{eqnarray} \label{3-9-2}
&&\mathcal{V}'(R)\leq
\left(\mathbb{L}_{N}\cdot\mathrm{vol}(\mathbb{S}^{n-2})\right)^{\frac{1}{n-2}}
\nonumber\\
&&\qquad +\left(2^{p-1}c_{4}(n,p)\left(\int_{\mathbb{S}^{n-2}}\int_{N}\int_{0}^{R}|\lambda(t)|^{p}wdtdsd\theta_{n-2}+\left(k_{+}^{\ast}\left(p,N,\lambda(t),R\right)\right)^{p}\right)\right)^{\frac{1}{2p-1}}\mathcal{V}^{\delta}\nonumber\\
&&~~=\left(\mathbb{L}_{N}\cdot\mathrm{vol}(\mathbb{S}^{n-2})\right)^{\frac{1}{n-2}}
\nonumber\\
&&\qquad+\left(c_{5}(n,p)\left(|\lambda_{\inf}|^{p}\int_{0}^{R}\mathcal{V}^{n-2}(t)dt+\left(k_{+}^{\ast}\left(p,N,\lambda(t),R\right)\right)^{p}\right)\right)^{\frac{1}{2p-1}}\mathcal{V}^{\delta},
\end{eqnarray}
where $c_{5}(n,p):=2^{p-1}c_{4}(n,p)$,
 and other notations have the same meanings as those
in Theorem \ref{theorem3-8}. From (\ref{3-9-2}) and the definition
of $\mathcal{V}$, one has
\begin{eqnarray}
\mathcal{V}'(R)\leq
a+\left(\widetilde{b}\int_{0}^{R}\mathcal{V}^{n-2}(t)dt+\widetilde{c}\right)^{\frac{1}{2p-1}}\mathcal{V}^{\delta},
\qquad \mathcal{V}(0)=0,
\end{eqnarray}
where $a$ is defined as in Theorem \ref{theorem3-8},
$\widetilde{b}=c_{5}(n,p)|\lambda_{\inf}|^{p}$,
$\widetilde{c}=c_{5}(n,p)\cdot\left(k_{+}^{\ast}\left(p,N,\lambda(t),R\right)\right)^{p}$.
Using a similar argument as that in \cite[page 1021]{pw11}, we know
that
$\mathrm{vol}\left(\mathcal{T}(N,R)\right)=\int_{0}^{R}\mathcal{V}^{n-2}(t)dt$
satisfies
\begin{eqnarray} \label{3-9-3}
\frac{d}{dR}\mathrm{vol}\left(\mathcal{T}(N,R)\right)\leq\alpha+\beta+\gamma\cdot\mathrm{vol}\left(\mathcal{T}(N,R)\right),
\end{eqnarray}
where
\begin{eqnarray*}
&&
\alpha=a^{\frac{n-2}{n-1-\delta}}+\left(\frac{2p-1}{2p\widetilde{b}}\right)^{\frac{n-2}{n-1-\delta}}\widetilde{c},\\
&& \beta=a^{\frac{n-2}{n-1-\delta}},\\
&&\gamma=
a^{\frac{n-2}{n-1-\delta}}+\left(\frac{2p-1}{2p}\right)^{\frac{n-2}{n-1-\delta}}\widetilde{b}^{\frac{1-\delta}{n-1-\delta}},\\
 &&\delta=\frac{2p-n+1}{2p-1}.
\end{eqnarray*}
Solving the differential inequality (\ref{3-9-3}), and together with
the initial condition $\mathrm{vol}\left(\mathcal{T}(N,0)\right)=0$,
directly yields
\begin{eqnarray*}
\mathrm{vol}\left(\mathcal{T}(N,R)\right)\leq\frac{\alpha\gamma+\beta}{\gamma^2}\left(e^{\gamma
R}-1\right)-\frac{\beta}{\gamma}R,
\end{eqnarray*}
which implies the conclusion of Theorem \ref{theorem3-9}.
\end{proof}

For a complete Riemannian $n$-manifold and a subset
$\widehat{S}_{q}\subset S_{q}M\subset T_{q}M$ of $T_{q}M$, set
\begin{eqnarray*}
\Gamma(\widehat{S}_{q},R):=\left\{\exp_{q}(t\theta)|0\leq
t<R,~\theta\in\widehat{S}_{q}\right\},
\end{eqnarray*}
which is actually a geodesic cone with vertex at $q$. Using the
Type-I integral radial Ricci curvature
$k_{-}\left(p,q,\lambda(t),R\right)$ w.r.t. $q$ (see Definition
\ref{def1-1}), we can measure the volume of
$\Gamma(\widehat{S}_{q},R)$. In fact, we can prove:

\begin{theorem} \label{theorem3-10}
Given an $n$-dimensional ($n\geq2$) complete Riemannian manifold
$M$, assume that $p>\frac{n}{2}$ and that $\lambda(t)$ is a function
on $M$ satisfying the property \textbf{P1}. Then we have
\begin{eqnarray*}
\mathrm{vol}\left(\Gamma(\widehat{S}_{q},R)\right)\leq
G(n,p,\mathrm{vol}(\widehat{S}_{q}),\lambda_{\mathrm{inf}},k_{-}(q),R),
\end{eqnarray*}
where, similarly, $k_{-}(q):=k_{-}\left(p,q,\lambda(t),R\right)$,
$\lambda_{\inf}:=\inf_{(0,R)}\lambda(t)\leq0$, and
$\mathrm{vol}(\widehat{S}_{q})$ is the volume of $\widehat{S}_{q}$.
Moreover, as $\mathrm{vol}(\widehat{S}_{q})$,
$k_{-}(q)\rightarrow0$, we have
$$G(n,p,\mathrm{vol}(\widehat{S}_{q}),\lambda_{\mathrm{inf}},k_{-}(q),R)\rightarrow0.$$
\end{theorem}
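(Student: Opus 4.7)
The plan is to mirror the two-stage argument used in Theorem \ref{theorem3-9}: first establish a $\lambda\equiv 0$ cone analogue of Theorem \ref{theorem3-8} driven by the $L^{p}$-norm of the radial Ricci negative part, and then promote it to a general $\lambda$ satisfying \textbf{P1} through the pointwise $\lambda_{\inf}$-trick of (\ref{3-9-1}).

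For the first stage, introduce the cone area function
\begin{eqnarray*}
\mathcal{V}^{n-1}(r) := \int_{\widehat{S}_{q}} J^{n-1}(r,\xi)\,d\sigma,
\end{eqnarray*}
so that $\mathrm{vol}(\Gamma(\widehat{S}_{q},R))=\int_{0}^{R}\mathcal{V}^{n-1}(t)\,dt$ and, by (\ref{FJ-2}), $\mathcal{V}(0)=0$ with $\mathcal{V}(r)\sim(\mathrm{vol}(\widehat{S}_{q}))^{1/(n-1)}\,r$ near $r=0$. Restricting the pointwise identities underlying Lemmas \ref{lemma2-2}--\ref{lemma2-4} from $\mathbb{S}^{n-1}$ to $\widehat{S}_{q}$, combining with the Jacobi inequality (\ref{FJ-1}) in the Euclidean model $f(t)=t$, and closing with H\"older's inequality in the $\psi$-variable as in the proof of Theorem \ref{theorem2-1}, I expect to obtain a differential inequality
\begin{eqnarray*}
\mathcal{V}'(r)\leq \widehat{a}+\widehat{b}\,\mathcal{V}^{\delta}(r),\qquad \mathcal{V}(0)=0,
\end{eqnarray*}
with $\widehat{a}=(\mathrm{vol}(\widehat{S}_{q}))^{1/(n-1)}$, some $\delta=\delta(n,p)\in(0,1)$, and $\widehat{b}$ equal to a constant depending on $n,p$ times a fractional power of $\int_{\Gamma(\widehat{S}_{q},R)}|\min\{0,\mathrm{Ric}(v_{x},v_{x})\}|^{p}\,dv$.

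For the second stage, set $\widetilde{\sigma}(x):=\max\{0,-\mathrm{Ric}(v_{x},v_{x})/(n-1)\}$, which by (\ref{FJ-1}) satisfies $J''\leq\widetilde{\sigma}\,J$. The scalar-Ricci analogue of (\ref{3-9-1}) gives $\widetilde{\sigma}\leq\max\{0,\widetilde{\sigma}+\lambda(t)\}-\lambda(t)=\rho(q,x)/(n-1)-\lambda(t)$, whence
\begin{eqnarray*}
\widetilde{\sigma}^{p}\leq 2^{p-1}\bigl((\rho/(n-1))^{p}+|\lambda_{\inf}|^{p}\bigr).
\end{eqnarray*}
Integrating this pointwise bound over $\Gamma(\widehat{S}_{q},R)$ replaces the curvature integral in $\widehat{b}$ by a combination of $(k_{-}(q))^{p}$ and $|\lambda_{\inf}|^{p}\,\mathrm{vol}(\Gamma(\widehat{S}_{q},R))$. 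Writing $y(R):=\mathrm{vol}(\Gamma(\widehat{S}_{q},R))$ and feeding this back into the first-stage inequality, then processing with Young's inequality exactly as in the passage from (\ref{3-9-2}) to (\ref{3-9-3}), I obtain a linear differential inequality
\begin{eqnarray*}
y'(R)\leq \alpha+\beta+\gamma\,y(R),\qquad y(0)=0,
\end{eqnarray*}
with coefficients $\alpha,\beta,\gamma$ explicit in $n,p,\mathrm{vol}(\widehat{S}_{q}),|\lambda_{\inf}|,k_{-}(q)$. Solving yields the required function $G$, and the smallness statement follows because $\alpha,\beta\to 0$ as $\mathrm{vol}(\widehat{S}_{q}),k_{-}(q)\to 0$.

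The main obstacle is the first stage. Restricting the angular integration to a proper subset $\widehat{S}_{q}$ removes the full-solid-angle denominator used in the Bishop--Gromov ratio of Theorem \ref{theorem2-1}, so I cannot work with a normalised volume ratio; I must carry the unnormalised area function $\mathcal{V}$ itself and verify that the H\"older step on $\psi$ still yields an exponent $\delta<1$ on $\mathcal{V}$ together with the correct additive constant $\widehat{a}$ capturing $\mathrm{vol}(\widehat{S}_{q})$, so that the final bound vanishes as the data do. Once the cone form of Lemma \ref{lemma2-2} is in place, the remaining ODE manipulations parallel the end of the proof of Theorem \ref{theorem3-9} verbatim.
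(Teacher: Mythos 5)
Your overall architecture is exactly the paper's: derive a sublinear differential inequality for a cone area function built from $J^{n-1}$ over $\widehat{S}_{q}$, with additive constant $(\mathrm{vol}(\widehat{S}_{q}))^{1/(n-1)}$ and a coefficient controlled by the integral radial Ricci curvature; split off $|\lambda_{\inf}|$ via the pointwise trick of (\ref{3-9-1}); then linearize and solve as in the passage from (\ref{3-9-2}) to (\ref{3-9-3}). The paper defines $\widehat{\mathcal{V}}(R)=\bigl(\int_{\widehat{S}_{q}}J_{+}^{n-1}d\theta\bigr)^{1/(n-1)}$ and obtains precisely your target inequality $\widehat{\mathcal{V}}'\leq a_{1}+b_{1}\widehat{\mathcal{V}}^{\widehat{\delta}}$ with $\widehat{\delta}=\frac{2p-n}{2p-1}$. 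The one place where your cited tools do not quite match the claimed output is the first stage: the $\psi$-machinery of Lemmas \ref{lemma2-2} and \ref{lemma2-4} controls the logarithmic-derivative defect $J'/J-f'/f$ and therefore naturally produces a Bishop--Gromov-type \emph{ratio} inequality (as in Theorem \ref{theorem2-1}), not the additive-plus-sublinear form $\mathcal{V}'\leq\widehat{a}+\widehat{b}\,\mathcal{V}^{\delta}$; converting $J'\leq J/t+\frac{\psi}{n-1}J$ into the latter would require a bound like $J\leq t$ that is unavailable without a curvature sign. The paper instead derives the pointwise estimate $\frac{J'-1}{J^{\widehat{\delta}}}(R,\theta)\leq\bigl(c_{6}(n,p)\int_{0}^{R}\widehat{\rho}^{\,p}J^{n-1}dt\bigr)^{1/(2p-1)}$ following the Riccati argument of \cite[Lemma 7.3]{yd1} (an estimate on $J'-1$ itself, in the same spirit as the proof of Lemma \ref{lemma2-4} but with a different normalization), and only then applies H\"older over $\widehat{S}_{q}$. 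You correctly identify this as the main obstacle, but the fix is the Yang-type estimate rather than a restriction of Lemma \ref{lemma2-2}; alternatively, the ratio-comparison route you describe does yield a different but still admissible bound of the form $\mathrm{vol}(\Gamma(\widehat{S}_{q},R))\leq 2^{2p-1}\int_{0}^{R}f^{n-1}\bigl(\mathrm{vol}(\widehat{S}_{q})+c\,(k_{-}(q))^{p}\bigr)$, which also vanishes as $\mathrm{vol}(\widehat{S}_{q})$ and $k_{-}(q)$ tend to zero, so your plan is repairable either way.
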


\begin{proof}
For $x\in M\backslash\left(Cut(q)\cup\{q\}\right)$, define a
function $\widehat{\rho}$ as
$\widehat{\rho}(q,x):=\left|\min\left\{0,\mathrm{Ric}(v_{x},v_{x})\right\}\right|$,
which, under the situation $\lambda(t)\equiv0$, is exactly the
function $\rho(q,x)$ defined by (\ref{def1-1-F1}). Using relations
(\ref{FJ-1}), (\ref{FJ-2}), and a similar argument to that in the
proof of \cite[Lemma 7.3]{yd1}, for $(R,\theta)\in\mathcal{D}_{q}$,
we can get
\begin{eqnarray} \label{3-10-1}
\frac{J'-1}{J^{\widehat{\delta}}}(R,\theta)\leq\left(c_{6}(n,p)\int_{0}^{R}\widehat{\rho}^{p}J^{n-1}dt\right)^{\frac{1}{2p-1}},
\end{eqnarray}
where $p>\frac{n}{2}$, $\widehat{\delta}=\frac{2p-n}{2p-1}$, and
\begin{eqnarray} \label{def-c6}
c_{6}(n,p)=\left(2-\frac{1}{p}\right)^{p}\left(\frac{p-1}{2p-n}\right)^{p-1}.
\end{eqnarray}
Set
\begin{eqnarray*}
J_{+}(R,\theta):=\left\{
\begin{array}{ll}
J(R,\theta),  \qquad & (R,\theta)\in\mathcal{D}_{q},\\
0, \qquad & (R,\theta)\in T_{q}M\setminus\mathcal{D}_{q}
\end{array}
\right.
\end{eqnarray*}
and
\begin{eqnarray*}
\widehat{\mathcal{V}}(R):=\left(\int_{\widehat{S}_{q}}(J_{+}(R,\theta))^{n-1}d\theta\right)^{\frac{1}{n-1}}.
\end{eqnarray*}
By H\"{o}lder's inequality and (\ref{3-10-1}), it follows that
\begin{eqnarray} \label{3-10-2}
\widehat{\mathcal{V}}'(R)\leq
a_{1}+b_{1}\widehat{\mathcal{V}}^{\widehat{\delta}}(R), \qquad
\widehat{\mathcal{V}}(0)=0,
\end{eqnarray}
where
$a_{1}=\left(\mathrm{vol}(\widehat{S}_{q})\right)^{\frac{1}{n-1}}$
and
\begin{eqnarray*}
b_{1}=\left(c_{6}(n,p)\cdot
\left(k_{-}\left(p,q,0,R\right)\right)^p\right)^{\frac{1}{2p-1}}.
\end{eqnarray*}
Using a similar argument to that in the proof of Theorem
\ref{theorem3-9}, one can obtain
\begin{eqnarray*}
\int_{0}^{R}\left(\widehat{\mathcal{V}}(r)\right)^{n-1}dr=\mathrm{vol}\left(\Gamma(\widehat{S}_{q},R)\right)\leq\frac{\widehat{\alpha}\widehat{\gamma}+\widehat{\beta}}{\widehat{\gamma}^2}\left(e^{\widehat{\gamma}
R}-1\right)-\frac{\widehat{\beta}}{\widehat{\gamma}}R,
\end{eqnarray*}
where
\begin{eqnarray*}
&&
\widehat{\alpha}=a_{1}^{\frac{n-1}{n-\widehat{\delta}}}+2^{p-1}c_{6}(n,p)\cdot\left(\frac{2p-1}{2pb_{1}}\right)^{\frac{n-1}{n-\widehat{\delta}}}\cdot\left(k_{-}\left(p,q,\lambda(t),R\right)\right)^p,\\
&& \widehat{\beta}=a_{1}^{\frac{n-1}{n-\delta}},\\
&&\widehat{\gamma}=
a_{1}^{\frac{n-1}{n-\widehat{\delta}}}+\left(\frac{2p-1}{2p}\right)^{\frac{n-1}{n-\widehat{\delta}}}\cdot\left(2^{p-1}c_{6}(n,p)\cdot|\lambda_{\inf}|^{p}\right)^{\frac{1-\widehat{\delta}}{n-\widehat{\delta}}}.
\end{eqnarray*}
Theorem \ref{theorem3-10} follows naturally.
\end{proof}

\begin{remark}
\rm{ Clearly, since
$\Gamma(\widehat{S}_{q},R)=\left\{\exp_{q}(t\theta)|0\leq
t<R,~\theta\in\widehat{S}_{q}\right\}$, the geodesic
$\gamma(t)=\exp_{q}(t\theta)$ should be a unit-speed minimizing
geodesic, which implies that
\begin{eqnarray*}
0\leq R\leq\min\limits_{\theta\in\widehat{S}}d_{\theta},
\end{eqnarray*}
where $d_{\theta}$ is defined similarly as $d_{\xi}$ in Section
\ref{S2-P}.
 }
\end{remark}

At the end of this section, using a similar method to that of
\cite[Theorem 2]{sg}, we can obtain an upper bound for the volume of
normal tubes around hypersurfaces.

\begin{theorem}  \label{theorem3-13}
Assume that $N\subset M$ is a compact smoothly embedded hypersurface
of the given complete Riemannian $n$-manifold $M$, $n\geq2$, and $N$
satisfies the following regularity property:
\begin{itemize}

\item  For almost every $x\in
M\setminus N$, any minimizing geodesic from $x$ to $N$ attains $N$
at a regular point.

\end{itemize}
Besides, $\lambda(t)$ is a function on $M$ satisfying the property
\textbf{P2}. Then for every $p>\frac{n}{2}$ and any $R,s>0$,  we
have
\begin{eqnarray*}
&&\mathrm{vol}\left(\mathcal{T}(N,R)\right)\leq
\left(e^{c_{7}(n,p)\sqrt{|\lambda_{\mathrm{inf}}|}R}-1\right)\Bigg{[}\frac{2}{c_{7}(n,p)\sqrt{|\lambda_{\mathrm{inf}}|}}\mathrm{vol}(N)\\
&& \qquad \quad
+\frac{(n-1)^{2p-1}}{\left(c_{7}(n,p)\sqrt{|\lambda_{\mathrm{inf}}|}\right)^{2p}}
\int_{N}|H|^{2p-1}dv_{N} +
\left((n-1)\sqrt{|\lambda_{\mathrm{inf}}|}\right)^{-p}\cdot
\left(k_{-}\left(p,N,\lambda(t),R\right)\right)^{p}\Bigg{]},
\end{eqnarray*}
where, similarly, $\mathcal{T}(N,R)$ is defined as
(\ref{rem-EXTRA}), $\lambda_{\inf}:=\inf_{(0,R)}\lambda(t)\leq0$,
and $k_{-}\left(p,N,\lambda(t),R\right)$ is the Type-I integral
radial Ricci curvature w.r.t. $N$. Moreover, $H$ is the mean
curvature of $N$, which is given by
$H(z)=\frac{1}{n-1}\mathrm{Tr}(II_{z})$ for any $z\in N$ (i.e., the
trace of the second fundamental form $II_{z}$ at $z$), $dv_{N}$ is
the volume element of $N$ induced by the volume density $dv$ of $M$,
and
\begin{eqnarray*}
c_{7}(n,p):=\left(\frac{2p-1}{p}\right)^{\frac{1}{2}}(n-1)^{\frac{2p-1}{2p}}\left(\frac{2p-2}{2p-n}\right)^{\frac{p-1}{2p}}.
\end{eqnarray*}
\end{theorem}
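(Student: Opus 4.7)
The plan is to adapt the Riccati machinery behind Lemmas \ref{lemma2-2}--\ref{lemma2-4}, working now in Fermi coordinates around $N$ so that the mean curvature of $N$ enters as the initial condition in the Riccati ODE satisfied by the log-derivative of the Jacobian of the normal exponential map. First I would parametrize $\mathcal{T}(N, R) \setminus Cut(N)$ by $(t, z, v)$ with $z \in N$, $v \in \upsilon(N)|_z$, $\|v\| = 1$ (two choices at each $z$), and $t = d_M(N, \exp_z(tv))$; the regularity hypothesis ensures this parametrization captures $\mathcal{T}(N, R)$ up to a null set. Writing $dv = J(t, z, v)\, dt\, dv_N(z)\, d\sigma(v)$, one has $J(0, z, v) = 1$ and $\partial_t J|_{t=0} = -(n-1)H_v(z)$. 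Setting $\theta := (\log J)'$ and $\kappa := \sqrt{|\lambda_{\inf}|}$, the standard Riccati inequality (see (\ref{FJ-1})) combined with the definition (\ref{def1-1-F1}) (adapted to $N$ as in Remark \ref{remark2-1}(2)) and $\lambda(t) \geq -\kappa^2$ gives
\begin{eqnarray*}
\theta' + \frac{\theta^2}{n-1} + (n-1)\lambda(t) \leq \rho.
\end{eqnarray*}
Introducing the excess function $\Theta := \max\{0,\, \theta - (n-1)\kappa\}$ (the analogue of $\psi$ in Lemma \ref{lemma2-2}), algebraic rearrangement then produces
\begin{eqnarray*}
\Theta' + 2\kappa\Theta + \frac{\Theta^2}{n-1} \leq \rho \qquad \mathrm{on}~\{\Theta > 0\}.
\end{eqnarray*}

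Second, I would multiply this inequality by $\Theta^{2p-2} J$ and integrate in $t$ from $0$ to $R$. Integrating by parts on $\int_0^R \Theta'\Theta^{2p-2} J\, dt$ (using $J' = \theta J = (\Theta + (n-1)\kappa) J$ on the support of $\Theta$), and then applying H\"older to $\int_0^R \rho \Theta^{2p-2} J\, dt$ followed by Young's inequality, the $\int_0^R \Theta^{2p} J\, dt$ term on the right is absorbed into the left; this is the step that forces $p > n/2$ so that $1/(n-1) - 1/(2p-1) > 0$, exactly as in the proof of Lemma \ref{lemma2-4}. The outcome is a pointwise (in $z, v$) bound
\begin{eqnarray*}
\int_0^R \Theta^{2p-1}(t, z, v)\, J(t, z, v)\, dt \leq \frac{C(n,p)}{\kappa}\left[\Theta^{2p-1}(0, z, v) + \int_0^R \rho^p J\, dt\right].
\end{eqnarray*}
Since $\Theta(0, z, v) \leq (n-1)(|H(z)| + \kappa)$, summing over the two unit normals and integrating over $N$ converts the right side into a suitable combination of $\int_N |H|^{2p-1}\, dv_N$, $\kappa^{2p-1}\, \mathrm{vol}(N)$, and $(k_{-}(p, N, \lambda(t), R))^p$.

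Third, to recover $V(R) := \mathrm{vol}(\mathcal{T}(N, R)) = \int_0^R A(t)\, dt$ with $A(t) := \int_N \sum_{\pm v} J(t, z, v)\, dv_N$, one notes $A'(R) = \int_N \sum_\pm \theta J\, dv_N$ and uses $\theta \leq \Theta + (n-1)\kappa$ pointwise to obtain
\begin{eqnarray*}
A'(R) \leq (n-1)\kappa A(R) + \int_N \sum_{\pm v} \Theta(R, z, v) J(R, z, v)\, dv_N.
\end{eqnarray*}
Integrating in $R$, applying H\"older in the form $\int_0^R\int_N \Theta J\, dv_N\, dt \leq \left(\int_\mathcal{T}\Theta^{2p-1}\, dv\right)^{1/(2p-1)} V(R)^{(2p-2)/(2p-1)}$ together with Young's inequality (tuned so that the coefficient of $V(R)$ becomes $c_7(n,p)\kappa$), and substituting the integrated estimate from step two, yields a Gr\"onwall-type ODE $V'(R) \leq c_7\kappa V(R) + D$, where $D$ depends linearly on $\mathrm{vol}(N)$, $\int_N |H|^{2p-1}\, dv_N$, and $(k_{-})^p$. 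Solving this ODE with $V(0) = 0$ produces the announced $(e^{c_7\kappa R} - 1)$-factor multiplying the three contributions.

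The main obstacle is the constant accounting: the Young and H\"older parameters in the last two steps must be tuned so that (i) the coefficient in the Gr\"onwall ODE collapses to exactly $c_7(n,p)\kappa$ with the stated closed form for $c_7(n,p)$, and (ii) the $\kappa$- and $(n-1)$-prefactors of $\mathrm{vol}(N)$, $\int_N |H|^{2p-1}\, dv_N$, and $(k_{-})^p$ in the final bound come out exactly as announced. A secondary technicality is justifying the integration by parts across the cut locus $Cut(N)$; the stated regularity assumption, together with the fact (already exploited in Lemma \ref{lemma2-2}) that the singular part of the derivative of $J$ contributes with negative measure, is precisely what is needed.
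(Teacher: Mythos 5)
Your route is genuinely different from the paper's. The paper follows Gallot's method from \cite[Theorem 2]{sg}: writing $\Phi^{\ast}dv=L^{n-1}dtdx$ for the normal exponential map, it works not with the log-derivative of the Jacobian but with the normalized quantity $u:=L'/L^{\epsilon}$, $\epsilon=\frac{2p-n}{2p-1}$; the elementary inequality $p\left(\frac{p}{p-1}\right)^{p-1}y\leq\left(\max\{1+y,0\}\right)^{p}$ turns the Jacobi inequality $L''\leq hL$ into a pointwise bound on $\frac{d}{dt}\left(u^{2p-1}\right)$ by $h^{p}L^{n-1}$, which integrates to control $u(t,x)$ by $H_{+}(x)^{2p-1}$ plus $\int_{0}^{t}h^{p}L^{n-1}ds$ (this is where the explicit $c_{7}(n,p)$ is born); one then gets $Q'\leq Q^{\frac{2p-2}{2p-1}}[\cdots]^{\frac{1}{2p-1}}$ for the level-set area $Q=\mathcal{A}'$, splits $h^{p}\leq2^{p-1}[(\max\{h+\lambda,0\})^{p}+|\lambda_{\inf}|^{p}]$, and reabsorbs the $|\lambda_{\inf}|^{p}$ contribution into the volume to produce the exponential factor. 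You instead transplant the Petersen--Wei absorption scheme of Lemmas \ref{lemma2-2} and \ref{lemma2-4} to Fermi coordinates: excess function $\Theta=\max\{0,\theta-(n-1)\kappa\}$, multiplication by $\Theta^{2p-2}J$, integration by parts, H\"older--Young absorption of $\int\Theta^{2p}J$ (using $p>\frac{n}{2}$ exactly as in Lemma \ref{lemma2-4}), then a Gr\"onwall argument for $V(R)$. Each of your steps is individually sound, the treatment of the cut locus is the same as in Lemma \ref{lemma2-2}, and the scheme does yield a bound of the announced shape $(e^{c\kappa R}-1)\cdot[\,\cdots]$ involving $\mathrm{vol}(N)$, $\int_{N}|H|^{2p-1}dv_{N}$ and $(k_{-})^{p}$.

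The "constant accounting" you defer is, however, not mere bookkeeping: your scheme cannot land on the inequality as stated. Track the powers of $\kappa=\sqrt{|\lambda_{\inf}|}$ attached to $(k_{-})^{p}$. In your step two the Young parameter must be chosen independently of $\kappa$, because it has to absorb the $\kappa$-free coefficient $\frac{1}{n-1}-\frac{1}{2p-1}$ of $\int\Theta^{2p}J$; the curvature term therefore enters the bound for $\int\Theta^{2p-1}J$ only after dividing by the coefficient of $2\kappa\int\Theta^{2p-1}J$, i.e.\ with a factor $\kappa^{-1}$. In step three the Young parameter must be $O(\kappa)$ so that the Gr\"onwall coefficient stays of order $\kappa$, contributing $\kappa^{-(2p-2)}$, and Gr\"onwall itself contributes a final $\kappa^{-1}$. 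The product is $\kappa^{-2p}(k_{-})^{p}$, whereas the theorem asserts $\left((n-1)\kappa\right)^{-p}(k_{-})^{p}$; these are not interchangeable (your power is the scaling-consistent one, matching the $\kappa^{-2p}$ on the $\int|H|^{2p-1}$ term, so the mismatch may partly reflect the statement itself, but it means your argument does not prove the displayed inequality). Similarly, your Gr\"onwall coefficient comes out as $(n-1)\kappa$ plus a tunable term, with no mechanism forcing it to equal $c_{7}(n,p)\kappa$ for the specific closed form of $c_{7}$. To obtain the theorem verbatim you would need to replace steps two and three by the paper's pointwise integration of $\left(u^{2p-1}\right)'$ and the resulting differential inequality for $\mathcal{A}$, which is where those particular constants come from.
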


\begin{proof}
Let $\Omega\subset M$ be any domain in $M$ whose boundary
$\partial\Omega$ satisfies the regularity property, and
$\Omega_{R}=\{x|d_{M}(x,\Omega)<R\}$. Let
$\widetilde{\partial\Omega}$ be the set of regular points in
$\partial\Omega$. Define a map $\Phi(t,x):
(-\infty,+\infty)\times\widetilde{\partial\Omega}\mapsto M$ given by
$\Phi(t,x)=\exp_{N}(t v_{x})$, where $v_{x}$ is the outward unit
normal vector at $x\in\widetilde{\partial\Omega}$, and, similar as
in (\ref{rem-EXTRA}), $\exp_{N}:\upsilon(N)\rightarrow M$ denotes
the normal exponential map. By Hopf-Rinow's theorem, since $M$ is
complete, one knows that $\Phi$ is surjective and should be a
diffeomorphism from some open subset
$U\subset(-\infty,+\infty)\times\widetilde{\partial\Omega}$ onto an
open subset $M^{\circ}\subset M$ whose complement is of Hausdorff
measure zero. This implies that there exists
$(t_{1},t_{2})\subset\mathbb{R}$ such that
$U=(t_{1},t_{2})\times\widetilde{\partial\Omega}$ and
$(t_{1},t_{2})$ is the greatest interval on which the geodesic
$\Phi(t,x)=\exp_{N}(t v_{x})$ minimizes the distance from
$\exp_{N}(t v_{x})$ to $\widetilde{\partial\Omega}$. Define a
function $L(t,x)$ by
\begin{eqnarray*}
\Phi^{\ast}dv=L^{n-1}(t,x)dtdx,
\end{eqnarray*}
where $\Phi^{\ast}$ is the pull-back induced by $\Phi$.
 Inspired by the derivation of (\ref{FJ-1}), we have, for any
$t\in(t_{1},t_{2})$, that
\begin{eqnarray*}
&&L''+\frac{1}{n-1}\mathrm{Ric}\left(\Phi_{\ast}(\frac{\partial}{\partial
t}),\Phi_{\ast}(\frac{\partial}{\partial
t})\right)L\leq0, \\
&& \qquad L(0,x)=1,\qquad L'(0,x)=H(x),
\end{eqnarray*}
where $\Phi_{\ast}(\cdot)$ is the tangential mapping induced by
$\Phi$ (one can see \cite{gro1,hk} for the detailed proof of the
above relations). Here we have used $(\cdot)'$ instead of
$\frac{\partial}{\partial t}(\cdot)$ for simplicity. By a direct
calculation, for any positive $\epsilon>0$ and at any point
$\Phi(t,x)$ with $(t,x)\in U$, one has
\begin{eqnarray} \label{3-13-1}
\left(\frac{L'}{L^{\epsilon}}\right)'+\epsilon\cdot\frac{(L')^2}{L^{1+\epsilon}}=\frac{L''}{L^{\epsilon}}\leq
h\cdot L^{1-\epsilon},
\end{eqnarray}
where
\begin{eqnarray*}
h(t):=\max\left\{0,-\frac{1}{n-1}\mathrm{Ric}\left(\Phi_{\ast}(\frac{\partial}{\partial
t}),\Phi_{\ast}(\frac{\partial}{\partial t})\right)\right\}.
\end{eqnarray*}
Since
\begin{eqnarray*}
p\left(\frac{p}{p-1}\right)^{p-1}y\leq\left(\max\{1+y,0\}\right)^{p}
\end{eqnarray*}
for any $y\in\mathbb{R}$, together with (\ref{3-13-1}), we have
 \begin{eqnarray*}
p\left(\frac{p\epsilon}{p-1}\right)^{p-1}\left(\frac{L'}{L^{\epsilon}}\right)'\left|\frac{L'}{L^{\epsilon}}\right|^{2(p-1)}\leq
h^{p}L^{(2p-1)(1-\epsilon)},
 \end{eqnarray*}
which, taking $\epsilon=\frac{2p-n}{2p-1}$ and integrating both
sides of the above inequality from $0$ to $t$, implies
\begin{eqnarray} \label{3-13-2}
\left(\frac{L'}{L^{\epsilon}}(t,x)\right)^{2p-1}\leq
\left(H_{+}(x)\right)^{2p-1}+2^{-(p-1)}(n-1)^{1-2p}c_{7}^{2p}(n,p)\cdot\int_{0}^{t}h^{p}(s)\cdot
L^{n-1}(s,x)ds.
\end{eqnarray}
Define
\begin{eqnarray*}
L_{+}(t,x):=\left\{
\begin{array}{ll}
L(t,x),  \qquad & t_{1}\leq t\leq t_{2},\\
0, \qquad & \mathrm{elsewhere},
\end{array}
\right.
\end{eqnarray*}
and
\begin{eqnarray*}
Q(R):=\int_{\partial\Omega}L_{+}^{n-1}(t,x)dx.
\end{eqnarray*}
Clearly, the volume of $\Omega_{R}\setminus\Omega$ is
\begin{eqnarray*}
\mathcal{A}(R):=\mathrm{vol}\left(\Omega_{R}\setminus\Omega\right)=\int_{\partial\Omega}\int_{0}^{R}L_{+}^{n-1}(t,x)dtdx.
\end{eqnarray*}
Using (\ref{3-13-2}), the H\"{o}lder inequality, the facts
\begin{eqnarray*}
&&\limsup\limits_{r\rightarrow0}\left[\frac{L_{+}(R+r,x)-L_{+}(R,x)}{r}\right]\leq\sup\{L'(R,x),0\},\\
&&\qquad Q'(R)=\limsup\limits_{r\rightarrow0}\frac{Q(R+r)-Q(R)}{r},
\end{eqnarray*}
and integrating over $\partial\Omega$, one has
\begin{eqnarray}  \label{3-13-3}
Q'\leq
Q^{\frac{2p-2}{2p-1}}\Bigg{[}(n-1)^{2p-1}\int_{\partial\Omega}\left(H_{+}(x)\right)^{2p-1}dx+2^{-(p-1)}c_{7}^{2p}(n,p)\cdot\int_{\Omega_{R}\setminus\Omega}
h^{p}dtdx\Bigg{]}^{\frac{1}{2p-1}},
\end{eqnarray}
where $H_{+}(x)=\max\{0,H(x)\}$. Together with the fact
 \begin{eqnarray*}
h^{p}\leq2^{p-1}\left[(\max\{h(t)+\lambda(t),0\})^{p}+|\lambda(t)|^{p}\right]\leq2^{p-1}\left[(\max\{h(t)+\lambda(t),0\})^{p}+|\lambda_{\mathrm{inf}}|^{p}\right],
 \end{eqnarray*}
it follows from (\ref{3-13-3}) that for any positive constant
$\tau>0$, one has
\begin{eqnarray}  \label{3-13-4}
&&\mathcal{A}(R+\tau)\leq
e^{c_{7}(n,p)\sqrt{|\lambda_{\mathrm{inf}}|}\tau}\cdot\mathcal{A}(R)+\left(e^{c_{7}(n,p)\sqrt{|\lambda_{\mathrm{inf}}|}\tau}-1\right)\cdot
\Bigg{[}\left(c_{7}(n,p)\sqrt{|\lambda_{\mathrm{inf}}|}\right)^{-1}\mathcal{A}'(0)\nonumber\\
&&\qquad \qquad\qquad
+\left(c_{7}(n,p)\sqrt{|\lambda_{\mathrm{inf}}|}\right)^{-2p}(n-1)^{2p-1}\int_{\partial\Omega}\left(H_{+}(x)\right)^{2p-1}dx+\nonumber\\
&&\qquad \qquad\qquad
\left((n-1)\sqrt{|\lambda_{\mathrm{inf}}|}\right)^{-p}
\int_{\Omega_{R+\tau}\setminus\Omega}|(n-1)\max\{h(t)+\lambda(t),0\}|^{p}dtdx\Bigg{]}.
\end{eqnarray}
Then the conclusion of Theorem \ref{theorem3-13} can be obtained
directly by applying (\ref{3-13-4}) to the domain
$\Omega=\cap_{\tau>0}\overline{\Omega_{\tau}}$ whose boundary is
made of two copies of $N$.
\end{proof}

\begin{remark}
\rm{ One can easily find that the method used in the proof of
Theorem \ref{theorem3-13} can be generalized to the case that the
codimension of $N$ is strictly greater than $1$. In fact, one only
needs to make some suitable adjustments to the above proof and then
an upper bound for the volume of normal tubes around
\emph{submanifolds} can be achieved. }
\end{remark}

We also have the following:

\begin{theorem} \label{theorem3-15}
Assume that $N\subset M$ is a compact smoothly embedded submanifold
of the given complete, noncompact Riemannian $n$-manifold $M$,
$n\geq2$, and $\lambda(t)$ is a function on $M$ satisfying the
property \textbf{P2} with $\lambda(t)\equiv0$. For any
$p>\frac{n}{2}$, we have:

(1) If $k_{-}\left(p,N,0\right)<\infty$, then
\begin{eqnarray*}
\lim\limits_{R\rightarrow\infty}\frac{\left(\mathrm{vol}\left(\mathcal{T}(N,R)\right)\right)^{\frac{1}{2p}}}{R}=
\lim\limits_{R\rightarrow\infty}\frac{\mathrm{vol}\left(\partial\mathcal{T}(N,R)\right)}{\left(\mathrm{vol}\left(\mathcal{T}(N,R)\right)\right)^{1-\frac{1}{2p}}}=0,
\end{eqnarray*}
where $k_{-}\left(p,N,0\right)$ is exactly the Type-I integral
radial Ricci curvature $k_{-}\left(p,N,\lambda(t),R\right)$ in
Theorem \ref{theorem3-13}, provided $\lambda(t)\equiv0$ and
$\mathcal{T}(N,R)$ covers $M$.

 (2) For any $s<\frac{1}{2p}$,
\begin{eqnarray*}
\limsup\limits_{R\rightarrow\infty}\frac{L_{s}\left(\mathrm{vol}\left(\mathcal{T}(N,R)\right)\right)}{R}&\leq&
\limsup\limits_{R\rightarrow\infty}\frac{\mathrm{vol}\left(\partial\mathcal{T}(N,R)\right)}{\left(\mathrm{vol}\left(\mathcal{T}(N,R)\right)\right)^{1-s}}\\
&\leq&c_{8}(n,p)\cdot\limsup\limits_{R\rightarrow\infty}\frac{\left(k_{-}\left(p,N,0,R\right)\right)^{\frac{1}{2}}}{\left(\mathrm{vol}\left(\mathcal{T}(N,R)\right)\right)^{\frac{1}{2p}-s}},
\end{eqnarray*}
where
\begin{eqnarray*}
L_{s}(x):=\left\{
\begin{array}{ll}
\frac{x^s}{s},  \qquad & \mathrm{if}~s\neq0,\\
\log(x), \qquad & \mathrm{if}~s=0,
\end{array}
\right.
\end{eqnarray*}
and
\begin{eqnarray*}
c_{8}(n,p):=2^{1-\frac{1}{2p}}(n-1)^{\frac{2p-1}{2p}}\left(\frac{p(p-1)}{(2p-1)(2p-n)}\right)^{\frac{p-1}{2p}}.
\end{eqnarray*}
\end{theorem}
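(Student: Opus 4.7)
The key tool is a differential inequality inherited from the proof of Theorem~\ref{theorem3-13}. Writing $Q(R):=\mathrm{vol}(\partial\mathcal{T}(N,R))=\mathcal{A}'(R)$ and $\mathcal{A}(R):=\mathrm{vol}(\mathcal{T}(N,R))$, and specializing inequality~(3-13-3) to $\lambda\equiv 0$ (so that $h=\rho/(n-1)$), the bound becomes
\[
Q'(R)\leq Q(R)^{(2p-2)/(2p-1)}\bigl[C_1+C_2\bigl(k_-(p,N,0,R)\bigr)^p\bigr]^{1/(2p-1)},
\]
with $C_1=(n-1)^{2p-1}\int_N|H|^{2p-1}\,dv_N$ and $C_2=2^{-(p-1)}c_7^{2p}(n-1)^{-p}$. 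This master inequality drives both parts.

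\textbf{First inequality in Part~(2).} Since $\frac{d}{dR}L_s(\mathcal{A}(R))=\mathcal{A}^{s-1}\mathcal{A}'=Q/\mathcal{A}^{1-s}$ and $L_s\circ\mathcal{A}$ is absolutely continuous on $(0,\infty)$, the elementary fact $\limsup_{R\to\infty}f(R)/R\leq\limsup_{R\to\infty}f'(R)$ applied to $f=L_s\circ\mathcal{A}$ yields the bound immediately.

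\textbf{Second inequality in Part~(2).} Introduce $\phi(R):=Q(R)/\mathcal{A}(R)^{(2p-1)/(2p)}$, so that $Q/\mathcal{A}^{1-s}=\phi/\mathcal{A}^{1/(2p)-s}$. At any $R_0$ where $\phi'(R_0)\geq 0$, differentiating the definition of $\phi$ forces $Q'(R_0)\geq\tfrac{2p-1}{2p}Q(R_0)^2/\mathcal{A}(R_0)$; combining with the master inequality and consolidating $Q^{2p/(2p-1)}$ on one side produces the pointwise Sobolev-type bound
\[
\phi(R_0)^{2p}\leq\Bigl(\tfrac{2p}{2p-1}\Bigr)^{2p-1}\bigl[C_1+C_2\bigl(k_-(R_0)\bigr)^p\bigr].
\]
A Radford-style running-maximum argument promotes this pointwise estimate (valid at points where $\phi'\geq 0$) to one valid for all $R$. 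Extracting the $2p$-th root via $(a+b)^{1/(2p)}\leq a^{1/(2p)}+b^{1/(2p)}$ and dividing by $\mathcal{A}^{1/(2p)-s}$, the $C_1$-piece contributes $O(\mathcal{A}^{s-1/(2p)})\to 0$ (using $s<1/(2p)$ and $\mathcal{A}\to\infty$), while the $C_2$-piece, after unpacking $c_7^{2p}$ and matching powers, reconstitutes into $c_8(n,p)(k_-)^{1/2}/\mathcal{A}^{1/(2p)-s}$.

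\textbf{Part~(1) via Part~(2), and main obstacle.} When $k_-(p,N,0)<\infty$, $(k_-(R))^{1/2}$ is uniformly bounded and, since $M$ is noncompact with $\mathcal{T}(N,R)$ exhausting it, $\mathcal{A}(R)\to\infty$; therefore the right-hand side of Part~(2) tends to zero for every $s<1/(2p)$, giving $\lim_{R\to\infty}Q/\mathcal{A}^{1-s}=0$. Using the factorization $Q/\mathcal{A}^{1-s}=\phi\cdot\mathcal{A}^{1/(2p)-s}$ with $\mathcal{A}^{1/(2p)-s}\to\infty$, one reads off $\phi\to 0$, which is the second limit in Part~(1); the first limit then follows from $\phi=2p\,(\mathcal{A}^{1/(2p)})'$ and Ces\`{a}ro averaging applied to $\mathcal{A}^{1/(2p)}(R)/R=\frac{1}{2pR}\int_0^R\phi(t)\,dt$. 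The main technical obstacle is the Radford running-maximum step in Part~(2): $\phi$ is only absolutely continuous and the set $\{\phi'\geq 0\}$ may be delicate, so justifying the passage from ``bound at new maxima'' to a genuine $\limsup$ bound requires careful handling of null sets; additionally, matching the explicit constant $c_8(n,p)$ demands precise Young/H\"{o}lder bookkeeping through the composition of $c_7$ and $(2p/(2p-1))^{(2p-1)/(2p)}$.
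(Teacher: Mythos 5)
Your overall architecture matches the paper's: everything is driven by the differential inequality (\ref{3-13-3}) from the proof of Theorem \ref{theorem3-13}, and the first inequality in Part (2) is obtained exactly as in the paper, via $\limsup_{R\to\infty}F(R)/R\leq\limsup_{R\to\infty}F'(R)$ applied to $F=L_s\circ\mathcal{A}$. For the second inequality in Part (2) you take a genuinely different route: a maximum-principle/running-maximum argument on $\phi=Q/\mathcal{A}^{(2p-1)/(2p)}$ at points where $\phi'\geq0$. The paper instead integrates the same master inequality in the form $\frac{d}{dR}Q^{2p/(2p-1)}\leq\frac{2p}{2p-1}\,Q\,[\cdots]^{1/(2p-1)}=\frac{2p}{2p-1}\,\mathcal{A}'\,[\cdots]^{1/(2p-1)}$, which (since the bracket is nondecreasing in $R$) yields the closed bound (\ref{3-15-1}), $Q(R)^{2p/(2p-1)}\leq Q(\epsilon)^{2p/(2p-1)}+c\,\mathcal{A}(R)\,[\cdots]^{1/(2p-1)}$, with no critical-point or null-set issues; dividing and extracting $2p$-th roots then gives the displayed estimate, the $O(\mathcal{A}^{s-1/(2p)})$ term absorbing both the $C_1$-piece and the initial datum $Q(\epsilon)$. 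Your route does work (one gets $\phi(R)\leq\max\{B(R),\phi(R_1)\}$, and the residual $\phi(R_1)/\mathcal{A}^{1/(2p)-s}\to0$), but the paper's direct integration is the cleaner way to discharge exactly the obstacle you flag. Note that both arguments tacitly require $\mathcal{A}(R)\to\infty$, i.e.\ $\mathrm{vol}(M)=\infty$; noncompactness alone does not guarantee this.

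The genuine gap is in your deduction of Part (1). The factorization is miswritten: $Q/\mathcal{A}^{1-s}=\phi/\mathcal{A}^{1/(2p)-s}$, not $\phi\cdot\mathcal{A}^{1/(2p)-s}$ (you had it right in your setup). With the correct factorization, knowing $Q/\mathcal{A}^{1-s}\to0$ for every $s<1/(2p)$ only says $\phi=o(\mathcal{A}^{1/(2p)-s})$ for every such $s$, which is perfectly compatible with $\phi$ being a nonzero constant: if $Q=\mathcal{A}^{1-1/(2p)}$ then $Q/\mathcal{A}^{1-s}=\mathcal{A}^{s-1/(2p)}\to0$ for all $s<1/(2p)$ while $\phi\equiv1$. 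So the second limit in Part (1), which is precisely the statement $\lim\phi=0$, does not follow from Part (2), and the Ces\`{a}ro step for the first limit collapses with it. What your estimates (and the paper's) actually deliver under $k_{-}(p,N,0)<\infty$ is only $\limsup\phi<\infty$; upgrading this to $\phi\to0$ requires an additional tail argument exploiting that $k_{-}(p,N,0,R)^{p}\nearrow k_{-}(p,N,0)^{p}$, so that the curvature integral over the annuli $\mathcal{T}(N,R)\setminus\mathcal{T}(N,R_1)$ becomes arbitrarily small for large $R_1$, together with control of the data on $\partial\mathcal{T}(N,R_1)$. That ingredient is absent from your proposal (the paper's own one-line justification of Part (1) is also silent on it, but your argument commits to a specific inference that is invalid).
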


\begin{proof}
Directly from (\ref{3-13-3}), we can obtain
\begin{eqnarray}  \label{3-15-1}
&&\left(\mathrm{vol}\left(\partial\Omega_{R}\right)\right)^{\frac{2p}{2p-1}}-\left(\mathrm{vol}\left(\partial\Omega\right)\right)^{\frac{2p}{2p-1}}\leq
2p(n-1)(2p-1)^{-1}\left(\mathrm{vol}\left(\Omega_{R}\right)-\mathrm{vol}\left(\Omega\right)\right)\nonumber\\
&& \qquad \times
\left[\int_{\partial\Omega}\left(H_{+}(x)\right)^{2p-1}dx+2^{-(p-1)}(n-1)^{-(2p-1)}c^{2p}_{7}(n,p)\cdot\int_{\Omega_{R}\setminus\Omega}h^{p}dtdx\right]^{\frac{1}{2p-1}}.
\qquad \qquad
\end{eqnarray}
Denote by $TM$ the tangent bundle of $M$.  Let $\Omega_{\epsilon}$
be the $\epsilon$-tubular neighborhood of $N$. Here $\epsilon>0$ is
chosen to be smaller than the injectivity radius of the exponential
map, which is defined on the subbundle of $TM$ over $N$ and
therefore which is normal to $N$. Clearly, $\Omega_{\epsilon}$ has
regular boundary. Using the fact that for a given function $F(t)$ on
$M$,
\begin{eqnarray*}
\limsup\limits_{R\rightarrow\infty}\frac{F(R)}{R}\leq\limsup\limits_{R\rightarrow\infty}F'(R),
\end{eqnarray*}
and applying (\ref{3-15-1}) to the domain
$\Omega=\Omega_{\epsilon}$, one has
\begin{eqnarray*}
\frac{\mathrm{vol}\left(\partial\mathcal{T}(N,R)\right)}{\left(\mathrm{vol}\left(\mathcal{T}(N,R)\right)\right)^{1-s}}\leq
c_{8}(n,p)\cdot\frac{\left(k_{-}\left(p,N,0,R\right)\right)^{\frac{1}{2}}}{\left(\mathrm{vol}\left(\mathcal{T}(N,R)\right)\right)^{\frac{1}{2p}-s}}+
O\left[\left(\mathrm{vol}\left(\mathcal{T}(N,R)\right)\right)^{s-\frac{1}{2p}}\right],
\end{eqnarray*}
which implies the conclusions of Theorem \ref{theorem3-15}.
\end{proof}

\begin{remark}
\rm{ (1) Especially, if the submanifold $N$ degenerates into a
single point $q\in M$, i.e., $N=\{q\}$, then the normal tube
$\mathcal{T}(N,R)$ becomes the geodesic ball $B(q,R)$, and in this
situation, (1) of Theorem \ref{theorem3-15} can be obtained by
directly applying the second conclusion of Theorem \ref{theorem2-1},
since, as explained clearly in (1) of Remark \ref{remark3-2}, the
volume of $B(q,R)$ \emph{at most} has  the growth $R^{n}$ provided
$k_{-}\left(p,q,0,R\right)$ is finite. \\
 (2) In fact, if $N=\{q\}$, then as shown in the proof of Theorem
 \ref{theorem3-15}, one has
 \begin{eqnarray*}
\frac{\mathrm{vol}(\partial B(q,R))}{\mathrm{vol}(B(q,R))}\leq
c_{8}(n,p)\cdot
\left(k_{-}\left(p,q,0,R\right)\right)^{\frac{1}{2}}\cdot\left(\mathrm{vol}(B(q,R))\right)^{-\frac{1}{2p}}+O\left[\left(\mathrm{vol}(B(q,R))\right)^{-\frac{1}{2p}}\right],
 \end{eqnarray*}
which gives an interesting isoperimetric property for the geodesic
ball $B(q,R)$ in terms of the Type-I integral radial Ricci curvature
w.r.t. $q$.
 }
\end{remark}

\section{Isoperimetric inequalities and their applications in spectral estimates}
\renewcommand{\thesection}{\arabic{section}}
\renewcommand{\theequation}{\thesection.\arabic{equation}}
\setcounter{equation}{0} \setcounter{maintheorem}{0}

In this section, first, we would like to mention two quantities
which have been introduced already in \cite{yd1}. For a given smooth
Riemannian $n$-manifold $M$, with the volume element $dv$,  and an
open set $U\subset M$, one can define \emph{local Sobolev constant}
$C_{S}(U)$ as the smallest number $A_1$ such that
\begin{eqnarray*}
\left(\int_{U}|h|^{\frac{2n}{n-2}}dv\right)^{\frac{n-2}{n}}\leq
A_{1}\int_{U}\|\nabla h\|^{2}dv, \qquad h\in C^{\infty}_{0}(U).
\end{eqnarray*}
The other quantity, named \emph{local isoperimetric constant}
$C_{I}(U)$, can be defined as the largest number $A_{2}>0$ such that
\begin{eqnarray*}
\int_{\partial\Omega}dv_{\partial\Omega}\geq
A_{2}\left(\int_{\Omega}dv\right)^{\frac{n-1}{n}} \qquad
\mathrm{i.e.}, ~\mathrm{vol}(\partial\Omega)\geq
A_{2}\cdot\left(\mathrm{vol}(\Omega)\right)^{\frac{n-1}{n}}
\end{eqnarray*}
holds for any compact domain $\Omega\subset U$ with smooth boundary,
where $dv_{\partial\Omega}$ is the volume element of
$\partial\Omega$ induced by $dv$. As shown in \cite{ic}, the
relation
\begin{eqnarray} \label{relation-1}
C_{S}(U)=4\left(\frac{n-1}{n-2}\right)^{2}\cdot\left(C_{I}(U)\right)^{-2}
\end{eqnarray}
holds.

Applying our volume estimate for geodesic cones (cf. Theorem
\ref{theorem3-10}), we can prove the following local isoperimetric
inequality if the Type-I integral radial Ricci curvature was assumed
to have an upper bound.

\begin{theorem} \label{theorem4-1}
Given an $n$-dimensional ($n\geq2$) complete Riemannian manifold
$M$, assume that $p>\frac{n}{2}$, $\tau>0$, and that $\lambda(t)$ is
a function on $M$ satisfying the property \textbf{P1} with
$\lambda(t)\equiv0$. Let
\begin{eqnarray*}
\eta=\left(\frac{B(q,r_1)}{\mathcal{B}_{n}(r_1)}\right)^{-\frac{1}{n}}=\left(\frac{B(q,r_1)}{n^{-1}w_{n}r_{1}^{n}}\right)^{-\frac{1}{n}},
\end{eqnarray*}
 \begin{eqnarray*}
r_2=\frac{\eta}{1+\tau}r_{1},
 \end{eqnarray*}
 and $r_{1}, r_{2}<\mathrm{inj}(q)$ with the injectivity radius $\mathrm{inj}(q)$ defined
 by (\ref{inj-R}). If
 \begin{eqnarray*}
c_{6}(n,p)\cdot r_{1}^{2p-n}\cdot\left(\sup\limits_{x\in M}
k_{-}(p,x,0,r_{1}+2r_{2})\right)^{p}\leq
w_{n}\cdot\min\left\{\tau^{2p-1}\eta^{n-2p},\frac{2p(n-1)}{n(2p-1)}\frac{\tau\eta^{n}}{(1+\tau+\eta)^{2p}}\right\}
 \end{eqnarray*}
 with $c_{6}(n,p)$ given by (\ref{def-c6}), then
 \begin{eqnarray*}
 C_{I}(B(q,r_{2}))\geq 2^{\frac{n-1}{n}} \cdot\left(\frac{\tau\eta}{1+\tau+\eta}\right)^{n+1}C_{I}(\mathbb{R}^n).
 \end{eqnarray*}
\end{theorem}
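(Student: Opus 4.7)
The plan is to follow the Croke-type strategy used in Yang \cite[Theorem 7.4]{yd1}, but replacing the Euclidean cone-volume estimate with our Theorem \ref{theorem3-10}. The underlying mechanism is to bound the volume of a smooth compact domain $\Omega\subset B(q,r_{2})$ by an integral over $\partial\Omega$ of volumes of geodesic cones of inward directions, and then to use our integral-Ricci cone estimate to reduce this to a nearly-Euclidean comparison, after which the Euclidean isoperimetric inequality gives the constant $C_{I}(\mathbb{R}^{n})$ up to the explicit loss factor $2^{(n-1)/n}\bigl(\tau\eta/(1+\tau+\eta)\bigr)^{n+1}$.

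First I would fix a compact smooth $\Omega\subset B(q,r_{2})$ and, for each boundary point $y\in\partial\Omega$, define the inward ``visible'' set $\widetilde{S}_{y}\subset S_{y}M$ of unit vectors $\theta$ pointing into $\Omega$ whose outgoing geodesic $\gamma_{\theta}(t)=\exp_{y}(t\theta)$ stays in $\overline{\Omega}$ on a maximal interval $[0,L_{y}(\theta))$. Since $\Omega\subset B(q,r_{2})$ and $r_{1},r_{2}<\mathrm{inj}(q)$, the triangle inequality forces $L_{y}(\theta)\le r_{1}+2r_{2}$ and all such geodesics stay inside the enlarged ball $B(q,r_{1}+2r_{2})$, so the hypothesis on $\sup_{x\in M}k_{-}(p,x,0,r_{1}+2r_{2})$ applies uniformly at each base point. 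A standard change of variables (polar coordinates at $y$ combined with the co-area formula along the inward normal) yields
\[
\mathrm{vol}(\Omega)\le\tfrac{1}{2}\int_{\partial\Omega}\mathrm{vol}\bigl(\Gamma(\widetilde{S}_{y},L_{y})\bigr)\,dv_{\partial\Omega}(y),
\]
where the factor $\tfrac{1}{2}$ comes from accounting for inward directions only.

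Second, each cone volume $\mathrm{vol}\bigl(\Gamma(\widetilde{S}_{y},L_{y})\bigr)$ is majorized via Theorem \ref{theorem3-10} (with $\lambda\equiv 0$, so $f(t)=t$ and the model is $\mathbb{R}^{n}$). The two alternative smallness conditions on $k_{-}$ present in the hypothesis correspond to the two distinct estimates appearing in the proof of Theorem \ref{theorem3-10}: the bound involving $\tau^{2p-1}\eta^{n-2p}$ controls the coefficient $b_{1}$ arising from \eqref{3-10-2} (it forces $b_{1}\widehat{\mathcal V}^{\widehat\delta}\ll a_{1}$), while the bound involving $\tau\eta^{n}/(1+\tau+\eta)^{2p}$ is what turns the resulting differential inequality into the clean quantitative estimate
\[
\mathrm{vol}\bigl(\Gamma(\widetilde{S}_{y},L_{y})\bigr)\le\Bigl(\tfrac{1+\tau+\eta}{\tau\eta}\Bigr)^{n+1}\cdot\tfrac{w_{n}L_{y}^{\,n}}{n}\cdot\tfrac{\mathrm{vol}(\widetilde{S}_{y})}{w_{n}},
\]
i.e.\ a $\bigl((1+\tau+\eta)/(\tau\eta)\bigr)^{n+1}$-multiple of the Euclidean cone volume. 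Combining with the first step and then applying the Euclidean isoperimetric inequality to $\Omega$ (viewed through this nearly-flat comparison) gives
\[
\mathrm{vol}(\partial\Omega)\ge 2^{\frac{n-1}{n}}\Bigl(\tfrac{\tau\eta}{1+\tau+\eta}\Bigr)^{n+1}C_{I}(\mathbb{R}^{n})\cdot\bigl(\mathrm{vol}(\Omega)\bigr)^{\frac{n-1}{n}},
\]
which is exactly the claim.

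The main obstacle will be the bookkeeping of constants: one must show that the two alternative thresholds in $\min\{\tau^{2p-1}\eta^{n-2p},\,\tfrac{2p(n-1)}{n(2p-1)}\tau\eta^{n}/(1+\tau+\eta)^{2p}\}$ precisely match the two places where smallness of $k_{-}$ is invoked in Theorem \ref{theorem3-10}, namely (i) to make the nonlinear term in \eqref{3-10-2} comparable to the linear one and (ii) to bound the total cone volume by a specified multiple of its Euclidean model. Aside from this quantitative matching, the only delicate point is checking that the ratio $\mathrm{vol}(B(q,r_{1}))/\mathrm{vol}(\mathcal{B}_{n}(r_{1}))=\eta^{-n}$ really does reappear naturally as the price paid for comparing the cone volume integral over $\partial\Omega$ with its Euclidean counterpart; this is where the choice $r_{2}=\eta r_{1}/(1+\tau)$ is forced on us.
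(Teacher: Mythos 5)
Your proposal circles the right ideas (a Croke/Yang-type visibility argument combined with the cone-volume estimate of Theorem \ref{theorem3-10}), but it has a genuine gap at its core: the passage from cone-volume bounds to an isoperimetric inequality is never actually carried out. Your final step, ``applying the Euclidean isoperimetric inequality to $\Omega$ viewed through this nearly-flat comparison,'' is precisely the content that needs proving; a pointwise comparison of cone volumes does not by itself transfer the Euclidean isoperimetric inequality to $\Omega$. The paper closes this gap by invoking Croke's theorem \cite[Theorem 11]{cbc}, which bounds $\mathrm{vol}(\partial\Omega)/(\mathrm{vol}(\Omega))^{(n-1)/n}$ from below by $2^{(n-1)/n}C_{I}(\mathbb{R}^{n})\left(\mathrm{vol}(\widehat{S}_{x})/w_{n}\right)^{1+1/n}$ for a suitable visibility set $\widehat{S}_{x}$ (this is (\ref{4-1-1})); the entire remaining work is then to bound $\mathrm{vol}(\widehat{S}_{x})$ from below. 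Your intermediate inequality $\mathrm{vol}(\Omega)\leq\frac{1}{2}\int_{\partial\Omega}\mathrm{vol}\bigl(\Gamma(\widetilde{S}_{y},L_{y})\bigr)\,dv_{\partial\Omega}$ is also unsubstantiated (the factor $\frac{1}{2}$ in particular), and even granting it, it yields a bound of the wrong homogeneity unless $L_{y}$ is controlled by $\mathrm{vol}(\Omega)^{1/n}$, which you do not do.

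Moreover, your visibility sets are attached to the wrong points and aim at the wrong target. The paper takes an interior point $x\in\Omega$ minimizing the visibility volume and lets $\widehat{S}_{x}$ consist of directions whose minimizing geodesics reach the annulus $B(q,r_{1})\setminus B(q,r_{2})$. This is essential: the cone $\Gamma(\widehat{S}_{x},r_{1}+r_{2})$ then covers the annulus, giving the lower bound (\ref{4-1-3}), while the curvature hypothesis (through the two regimes of the cone estimate, governed by the threshold $R_{0}$ in the proof) gives the upper bound (\ref{4-1-4}); dividing the two yields $\mathrm{vol}(\widehat{S}_{x})/w_{n}\geq\left(\tau\eta/(1+\tau+\eta)\right)^{n+1}$. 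In your setup (inward directions at $y\in\partial\Omega$ whose geodesics stay in $\overline{\Omega}$) the annulus, and hence the quantities $\eta$ and $r_{2}=\eta r_{1}/(1+\tau)$, never enter the argument, so the specific constant in the conclusion cannot be produced. To repair the proof you would need to restructure it around Croke's inequality and the annulus-covering trick rather than around a boundary integral of cone volumes.
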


\begin{proof}
Define
\begin{eqnarray*}
\mathcal{W}(R):=\left\{
\begin{array}{ll}
(1+\tau)a_{1}R, \qquad & 0\leq R\leq R_{0},\\
\left[(1+\tau^{-1})(1-\widehat{\delta})b_{1}R+\widehat{\delta}(\tau
a_{1}b_{1}^{-1})^{(1-\widehat{\delta})/\widehat{\delta}}\right]^{1/(1-\widehat{\delta})},
\qquad & R\geq R_0,
\end{array}
\right.
\end{eqnarray*}
where
$a_{1}=\left(\mathrm{vol}(\widehat{S}_{q})\right)^{\frac{1}{n-1}}$,
$b_{1}=\left(c_{6}(n,p)\cdot
\left(k_{-}\left(p,q,0,R\right)\right)^p\right)^{\frac{1}{2p-1}}$
defined as in (\ref{3-10-2}), and $R_0$ is given as
\begin{eqnarray*}
 R_{0}=\frac{1}{(1+\tau)a_{1}}\left(\frac{\tau a_{1}}{b_{1}}\right)^{1/\widehat{\delta}}
\end{eqnarray*}
with $\widehat{\delta}=\frac{2p-n}{2p-1}$. By direct calculation,
one can easily have
\begin{eqnarray*}
\mathcal{W}'(R)\geq a_{1}+b_{1}\mathcal{W}^{\widehat{\delta}}(R).
\end{eqnarray*}
Combining the above differential inequality with (\ref{3-10-2}), we
have $\widehat{\mathcal{V}}(R)\leq\mathcal{W}(R)$ for any $0\leq
R<\mathrm{inj}(q)$, and then the volume of the geodesic cone
$\Gamma(\widehat{S}_{q},R)$ can be estimated as follows
\begin{eqnarray*}
\mathrm{vol}\left(\Gamma(\widehat{S}_{q},R)\right)&=&\int_{0}^{R}\left(\widehat{\mathcal{V}}(r)\right)^{n-1}dr\leq\int_{0}^{R}\left(\widehat{\mathcal{W}}(r)\right)^{n-1}dr\\
&=& (1+\tau)^{n-1}n^{-1}\mathrm{vol}(\widehat{S}_{q})R^{n}, \qquad
\qquad \mathrm{if}~0\leq R\leq R_{0},
\end{eqnarray*}
or
\begin{eqnarray*}
\mathrm{vol}\left(\Gamma(\widehat{S}_{q},R)\right)&=&\frac{\tau^{n/\widehat{\delta}}}{1+\tau}\left[\frac{1}{n}-\frac{2p-1}{2p(n-1)}\right]a_{1}^{(n/\widehat{\delta})-1}b_{1}^{-n/\widehat{\delta}}\\
&&\qquad +c_{9}(n,p,\tau)\cdot
\int_{\Gamma(\widehat{S}_{q},R)}\rho^{p}dv\cdot\left[(1-\widehat{\delta})R+\widehat{\delta}R_{0}\right]^{2p}\\
&\leq&c_{9}(n,p,\tau)\cdot
\left(k_{-}(p,q,0,R)\right)^{p}\left[(1-\widehat{\delta})R+\widehat{\delta}R_{0}\right]^{2p},
\qquad \mathrm{if}~R_{0}\leq R\leq\mathrm{inj}(q),
\end{eqnarray*}
where
\begin{eqnarray*}
c_{9}(n,p,\tau):=\left(1+\frac{1}{\tau}\right)^{2p-1}\cdot\frac{2p-1}{2p(n-1)}\cdot
c_{6}(n,p).
\end{eqnarray*}
For any compact domain $\Omega\subset B(q,r_{2})$ with smooth
boundary $\partial\Omega$, $x\in\Omega$, denote by
$\widehat{S}_{x}\subset S_{x}M$ the set of unit tangent vectors $v$
such that the geodesic $\gamma(s)=\exp_{x}(sv)$ is a minimizing
geodesic joining $x$ and some point in $B(q,r_{2})\setminus
B(q,r_{1})$. Choose $x\in\Omega$ such that $\widehat{S}_{x}$ has
minimal volume. By \cite[Theorem 11]{cbc}, one has
\begin{eqnarray} \label{4-1-1}
\frac{\mathrm{vol}(\partial\Omega)}{\left(\mathrm{vol}(\Omega)\right)^{\frac{n-1}{n}}}\geq
2^{\frac{n-1}{n}} \cdot
C_{I}(\mathbb{R}^n)\cdot\left(\frac{\mathrm{vol}(\widehat{S}_{x})}{w_{n}}\right)^{1+\frac{1}{n}},
\end{eqnarray}
where $C_{I}(\mathbb{R}^n)=w_{n}/w_{n+1}^{(n-1)/n}$ is the local
isoperimetric constant of $\mathbb{R}^n$. Under the assumption for
Type-I integral radial Ricci curvature in Theorem \ref{theorem4-1},
together with the above upper bounds for geodesic cones, one has
\begin{eqnarray} \label{4-1-2}
\mathrm{vol}(B(q,r_2))\leq (1+\tau)^{n-1}n^{-1}w_{n}r_{2}^{n}.
\end{eqnarray}
This implies that
\begin{eqnarray} \label{4-1-3}
\mathrm{vol}\left(\Gamma(\widehat{S}_{x},r_{1}+r_{2})\right)\geq\mathrm{vol}(B(q,r_{1})\setminus
B(q,r_{2}))\geq n^{-1}w_{n}\frac{\tau(\eta r_{1})^n}{1+\tau}.
\end{eqnarray}
On the other hand, one can easily check
 \begin{eqnarray*}
\mathrm{vol}\left(\Gamma(\widehat{S}_{x},r_{1}+r_{2})\right)\geq
c_{9}(n,p,\tau^{-1})\cdot
\left(\int_{\Gamma(\widehat{S}_{x},R)}\rho^{p}dv\right)^{\frac{1}{2p}}\cdot(r_{1}+r_{2})^{2p}
\end{eqnarray*}
holds, which implies
\begin{eqnarray} \label{4-1-4}
\mathrm{vol}\left(\Gamma(\widehat{S}_{x},r_{1}+r_{2})\right)\leq(1+\tau^{-1})^{n-1}n^{-1}\mathrm{vol}(\widehat{S}_{x})\cdot(r_{1}+r_{2})^{n}
\end{eqnarray}
 if the supremum of the Type-I integral radial Ricci curvature
was assumed to have an upper bound shown in Theorem
\ref{theorem4-1}. Combining (\ref{4-1-3}) and  (\ref{4-1-4}) yields
\begin{eqnarray*}
\frac{\mathrm{vol}(\widehat{S}_{x})}{w_{n}}\geq\left(\frac{\tau\eta}{1+\tau+\eta}\right)^{n+1}.
\end{eqnarray*}
Our estimate for $C_{I}(B(q,r_{2}))$ follows by substituting the
above inequality into (\ref{4-1-1}) directly.
\end{proof}

\begin{remark} \label{remark4-2}
\rm{ (1) Together with (\ref{relation-1}), one can get that under
the assumptions of Theorem \ref{theorem4-1}, the local Sobolev
constant $C_{S}(B(q,r_{2}))$ satisfies
\begin{eqnarray*}
C_{S}(B(q,r_{2}))&\leq&2^{-\frac{2}{n-1}}\left(\frac{n-1}{n-2}\right)^{2}\left(\frac{1+\tau+\eta}{\tau\eta}\right)^{-2(n+1)}\left(C_{I}(\mathbb{R}^n)\right)^{-2}\\
&=&2^{-\frac{2}{n-1}}\left(\frac{n-1}{n-2}\right)^{2}\left(\frac{1+\tau+\eta}{\tau\eta}\right)^{-2(n+1)}\cdot\frac{w_{n+1}^{\frac{2(n-1)}{n}}}{w_{n}^{2}}.
\end{eqnarray*}
 (2) Clearly, if one imposes an upper bound assumption for
 $k_{-}(p,q,0,r_2)$,
 the volume estimate (\ref{4-1-2}) for the geodesic ball $B(q,r_2)$ can be also
 obtained directly from the second assertion of Theorem \ref{theorem2-1}. \\
 (3) From Theorem \ref{theorem3-10}, we know that the upper bound
 estimate for volume of geodesic cones therein is still valid for the
 case that $\lambda(t)$ is not a constant function. Besides, \cite[Theorem
 11]{cbc} works for any smooth compact manifold with smooth
 boundary. Therefore, using a similar argument to that in the proof
 of Theorem \ref{theorem4-1}, an upper bound, which would involve a quantity from the spherically symmetric $n$-manifold $M^{-}:=[0,l)\times_{f}\mathbb{S}^{n-1}$, for
  $C_{S}(B(q,r_{2}))$ can be expected, where $f(t)$ is determined by
  the system (\ref{ODE}).
 }
\end{remark}

Now, we would like to use Theorem \ref{theorem3-13} to get an
interesting isoperimetric inequality. However, before that we need
to use a quantity introduced in \cite{sg}.

Given a Riemannian $n$-manifold  $M$ whose volume $\mathrm{vol}(M)$
is finite, $n\geq2$, then, as in \cite[Lemma 4]{sg}, one can define
an isoperimetric quantity $\mathrm{Is}(p)$ for $M$ as follows
 \begin{eqnarray} \label{IQ-D}
\mathrm{Is}(p):=\inf\left\{\frac{\mathrm{vol}(\partial\Omega)}{\left(\mathrm{vol}(\Omega)\right)^{1-\frac{1}{2p}}}\cdot\left(\mathrm{vol}(M)\right)^{-\frac{1}{2p}}
\Bigg{|}\Omega\subset
M,~\mathrm{vol}(\Omega)\leq\frac{\mathrm{vol}(M)}{2}\right\}.
 \end{eqnarray}
Gallot \cite[Lemma 4]{sg} proved that

\begin{lemma} \label{lemma4-3}
For any $p>\frac{n}{2}$, there exists a minimal current $\Omega_{0}$
in $M$ such that
\begin{eqnarray*}
\frac{\mathrm{vol}(\partial\Omega_0)}{\left(\mathrm{vol}(\Omega_0)\right)^{1-\frac{1}{2p}}}\cdot\left(\mathrm{vol}(M)\right)^{-\frac{1}{2p}}=\mathrm{Is}(p).
\end{eqnarray*}
Besides, this current has the following properties:

(1) For almost every point $x$ in $M$, any geodesic of minimal
length from $x$ to $\partial\Omega_{0}$ reaches $\partial\Omega_{0}$
at a regular point $x'$. Moreover, there exists a neighborhood $U$
of $x'$ in $M$ such that $U\cap\partial\Omega_{0}$ is smooth.

(2) Let us call $\widetilde{\partial\Omega_{0}}$ the set of all
regular points of $\partial\Omega_{0}$. The mean curvature $H$ of
$\widetilde{\partial\Omega_{0}}$ is constant and satisfies
$|H|\leq\frac{2p-1}{2p(n-1)}\cdot\frac{\mathrm{vol}(\partial\Omega_{0})}{\mathrm{vol}(\Omega_{0})}$.
Moreover, if $\mathrm{vol}(\Omega_0)\neq\frac{\mathrm{vol}(M)}{2}$,
then
$H=\frac{2p-1}{2p(n-1)}\cdot\frac{\mathrm{vol}(\partial\Omega_{0})}{\mathrm{vol}(\Omega_{0})}$.
\end{lemma}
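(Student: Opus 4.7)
The plan is to establish Lemma~\ref{lemma4-3} by combining the direct method of the calculus of variations with a first-variation computation, exactly in the style of classical isoperimetric minimization. Consider the functional
$$Q(\Omega) := \frac{\mathrm{vol}(\partial\Omega)}{\left(\mathrm{vol}(\Omega)\right)^{1-\frac{1}{2p}}}\cdot\left(\mathrm{vol}(M)\right)^{-\frac{1}{2p}}$$
on the admissible class $\mathcal{A}=\{\Omega\subset M:\Omega\text{ has finite perimeter},\; 0<\mathrm{vol}(\Omega)\leq\mathrm{vol}(M)/2\}$, whose infimum is by definition $\mathrm{Is}(p)$. The first task is to produce a minimizer $\Omega_{0}\in\mathcal{A}$.

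For existence, I would take a minimizing sequence $\{\Omega_{k}\}\subset\mathcal{A}$. Since $\mathrm{vol}(M)<\infty$, the characteristic functions $\chi_{\Omega_{k}}$ are bounded in $BV(M)$ (the perimeters are bounded because $Q(\Omega_{k})$ is bounded and the volumes are squeezed between $0$ and $\mathrm{vol}(M)/2$), so by the standard $BV$-compactness theorem one extracts a subsequence converging in $L^{1}$ to some $\chi_{\Omega_{0}}$. Lower semicontinuity of the perimeter under $L^{1}$-convergence, plus passage of the volume inequality to the limit, plus a non-collapsing argument showing $\mathrm{vol}(\Omega_{0})>0$ (otherwise $Q(\Omega_{k})\to\infty$, contradicting minimality), yields the desired $\Omega_{0}$. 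Regularity assertion (1) then follows from classical regularity theory for isoperimetric minimizers (De Giorgi--Federer--Almgren): the reduced boundary $\widetilde{\partial\Omega_{0}}$ is a smooth embedded hypersurface, while the singular set $\mathrm{Sing}\subset\partial\Omega_{0}$ has Hausdorff codimension at least $8$ inside $\partial\Omega_{0}$, hence codimension $\geq 9$ in $M$. Because the set of points $x\in M$ for which every minimizing geodesic from $x$ to $\partial\Omega_{0}$ hits $\mathrm{Sing}$ is contained in a union of normal exponential images of $\mathrm{Sing}$ whose total Hausdorff measure vanishes (by the codimension bound together with the Jacobian estimate on the normal exponential map), almost every $x\in M$ admits a minimizing geodesic landing at a regular point, giving (1).

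For assertion (2), I would use a first-variation argument on the regular part. For any smooth vector field $X$ compactly supported near a regular point $x'\in\widetilde{\partial\Omega_{0}}$, with flow $\phi_{t}$, write $\Omega_{t}=\phi_{t}(\Omega_{0})$ and $u=\langle X,\nu\rangle$ along $\widetilde{\partial\Omega_{0}}$. The standard formulae
$$\frac{d}{dt}\Big|_{t=0}\mathrm{vol}(\Omega_{t})=\int_{\widetilde{\partial\Omega_{0}}}u\,dv_{\partial\Omega_{0}},\qquad \frac{d}{dt}\Big|_{t=0}\mathrm{vol}(\partial\Omega_{t})=(n-1)\int_{\widetilde{\partial\Omega_{0}}}Hu\,dv_{\partial\Omega_{0}}$$
combined with $(d/dt)|_{t=0}Q(\Omega_{t})\geq 0$ (or $=0$ when the inequality constraint $\mathrm{vol}(\Omega)\leq\mathrm{vol}(M)/2$ is inactive) produce, after localizing $u$ and testing with both signs,
$$(n-1)H=\frac{2p-1}{2p}\cdot\frac{\mathrm{vol}(\partial\Omega_{0})}{\mathrm{vol}(\Omega_{0})}$$
whenever $\mathrm{vol}(\Omega_{0})<\mathrm{vol}(M)/2$, and the one-sided bound $|H|\leq\frac{2p-1}{2p(n-1)}\cdot\frac{\mathrm{vol}(\partial\Omega_{0})}{\mathrm{vol}(\Omega_{0})}$ when the constraint binds. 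In particular the mean curvature is constant on $\widetilde{\partial\Omega_{0}}$, which is exactly assertion (2).

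The main obstacle is not the existence step (routine $BV$-compactness), nor the first-variation computation (standard once one works on $\widetilde{\partial\Omega_{0}}$), but the regularity input in assertion (1): one must import the full strength of the Federer--Almgren dimension reduction to control $\mathrm{Sing}$, and then invoke a Sard-type argument along the normal exponential map of $\mathrm{Sing}$ to conclude that the ``bad'' set in $M$ has measure zero. A secondary delicate point is correctly handling the Lagrange-multiplier bookkeeping when $\mathrm{vol}(\Omega_{0})=\mathrm{vol}(M)/2$, where only the inequality (not equality) in $|H|$ is available because test vector fields pushing $\mathrm{vol}(\Omega)$ beyond $\mathrm{vol}(M)/2$ are inadmissible.
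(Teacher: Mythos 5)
The paper does not actually prove this statement: it is quoted verbatim from Gallot \cite[Lemma 4]{sg}, so there is no in-paper argument to compare yours against. Your sketch is nevertheless the right reconstruction, and it is essentially the route Gallot himself takes: existence of a minimizer by compactness and lower semicontinuity in the class of sets of finite perimeter (or integral currents), interior regularity of the reduced boundary with a small singular set by Almgren-type regularity for almost-minimizers, and the first-variation identity $\frac{d}{dt}\big|_{t=0}\log Q(\Omega_t)=\frac{(n-1)\int Hu}{\mathrm{vol}(\partial\Omega_0)}-\bigl(1-\tfrac{1}{2p}\bigr)\frac{\int u}{\mathrm{vol}(\Omega_0)}$, which yields constancy of $H$ and the value $H=\frac{2p-1}{2p(n-1)}\cdot\frac{\mathrm{vol}(\partial\Omega_0)}{\mathrm{vol}(\Omega_0)}$ when the volume constraint is inactive. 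You also correctly locate where $p>\frac{n}{2}$ enters: the non-collapsing estimate $Q(\Omega)\gtrsim \mathrm{vol}(\Omega)^{\frac{1}{2p}-\frac{1}{n}}$ blows up as $\mathrm{vol}(\Omega)\to 0$ precisely because $\frac{1}{2p}<\frac{1}{n}$.

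Two points in your sketch are softer than you present them. First, in (1) the bad set is $\{x:\ \emph{some}\ \text{minimizing geodesic from }x\text{ lands in }\mathrm{Sing}\}$, not the set where \emph{every} such geodesic does, and $\mathrm{Sing}$ is merely a closed set of Hausdorff dimension $\leq n-9$, not a submanifold, so ``normal exponential images of $\mathrm{Sing}$'' is not literally available; a naive count (base of dimension $\leq n-9$, an a priori $(n-1)$-dimensional cone of incoming directions at a singular point, plus the length parameter) does not close. The correct argument runs through the tube/coarea (Heintze--Karcher) estimate for the whole rectifiable boundary together with $\mathcal{H}^{n-1}(\mathrm{Sing})=0$, or equivalently the a.e.\ uniqueness of foot points for the Lipschitz distance function. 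Second, when $\mathrm{vol}(\Omega_0)=\mathrm{vol}(M)/2$ your admissible variations only give the one-sided inequality $(n-1)H\leq\bigl(1-\tfrac{1}{2p}\bigr)\frac{\mathrm{vol}(\partial\Omega_0)}{\mathrm{vol}(\Omega_0)}$; to upgrade this to $|H|\leq\cdots$ you must observe that $M\setminus\Omega_0$ is then an equally valid minimizer with mean curvature $-H$ and apply the same inequality to it. Neither point is fatal, but both need to be said for the proof to be complete.
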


\begin{remark}
\rm{ It is easy to see that (\ref{IQ-D}) can also be rewritten as
follows
\begin{eqnarray*}
\mathrm{Is}(p)=\inf\left\{\frac{\mathrm{vol}(\partial\Omega)}{\left(\min\{\mathrm{vol}(\Omega),\mathrm{vol}(M\setminus\Omega)\}\right)^{1-\frac{1}{2p}}}\cdot\left(\mathrm{vol}(M)\right)^{-\frac{1}{2p}}
\Bigg{|}\Omega\subset
M,~\partial\Omega~{\mathrm{is~regular}}\right\}.
\end{eqnarray*}
}
\end{remark}

We can prove:

\begin{theorem} \label{theorem4-4}
In any $n$-dimensional ($n\geq2$) Riemannian manifold $M$ whose
integral radial Ricci curvature satisfies
\begin{eqnarray} \label{4-4-1}
\left((n-1)\sqrt{|\lambda_{\mathrm{inf}}|}\right)^{-p}\cdot
\left(k_{-}\left(p,\partial\Omega_0,\lambda(t)\right)\right)^{p}\leq\frac{\mathrm{vol}(M)}{2}\cdot\left(e^{c_{7}(n,p)\sqrt{|\lambda_{\mathrm{inf}}|}D}-1\right)^{-1}
\end{eqnarray}
for $p>\frac{n}{2}$ and any positive constant $D$ such
that\footnote{ The precondition (\ref{4-4-2}) is used to make sure
that the normal tube $\mathcal{T}(\partial\Omega_0,D)$ covers the
manifold $M$ completely.}
\begin{eqnarray}  \label{4-4-2}
\sup\limits_{q\in\widetilde{\partial\Omega_0}}\ell(q)\leq D,
\end{eqnarray}
then every domain $\Omega\subset M$, with regular boundary,
satisfies
\begin{eqnarray*}
\frac{\mathrm{vol}(\partial\Omega)}{\mathrm{vol}(M)}\geq
c_{10}(n,p,\lambda_{\mathrm{inf}},D)\cdot\left(\inf\left\{\frac{\mathrm{vol}(\Omega)}{\mathrm{vol}(M)},\frac{\mathrm{vol}(M\setminus\Omega)}{\mathrm{vol}(M)}\right\}\right)^{1-\frac{1}{2p}},
\end{eqnarray*}
where
\begin{eqnarray*}
c_{10}(n,p,\lambda_{\mathrm{inf}},D):=c_{7}(n,p)\sqrt{|\lambda_{\mathrm{inf}}|}\cdot\min\left\{2^{-\frac{1}{2p-1}},
\frac{1}{4}\left(e^{c_{7}(n,p)\sqrt{|\lambda_{\mathrm{inf}}|}D}-1\right)^{-1}\right\},
\end{eqnarray*}
and, as in Lemma \ref{lemma4-3}, $\Omega_0$ is the minimal current
in $M$, $\widetilde{\partial\Omega_0}$ is the set of all regular
points of $\Omega_0$. Besides, $\lambda(t)$ is a function on $M$
satisfying the property \textbf{P2} (with $N=\Omega_0$ here),
$\ell(q)$ is defined as (\ref{key-def1}),
$\lambda_{\inf}:=\inf_{(0,D)}\lambda(t)\leq0$, $c_{7}(n,p)$ is
defined as in Theorem \ref{theorem3-13}, and, similarly,
 \begin{eqnarray*}
k_{-}\left(p,\partial\Omega_0,\lambda(t)\right)&=&\left(\int_{\mathcal{T}(\partial\Omega_0,D)}\left|\min\left\{0,\mathrm{Ric}(v_{x},v_{x})-(n-1)\lambda(t)\right\}\right|^{p}dv\right)^{\frac{1}{p}}\\
&=&\left(\int_{M}\left|\min\left\{0,\mathrm{Ric}\left(\frac{d}{dt}\Big{|}_x,\frac{d}{dt}\Big{|}_{x}\right)-(n-1)\lambda(t)\right\}\right|^{p}dv\right)^{\frac{1}{p}}.
\end{eqnarray*}
\end{theorem}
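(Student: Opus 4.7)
The proof plan is to apply Theorem~\ref{theorem3-13} to the minimizer of the isoperimetric quantity $\mathrm{Is}(p)$ produced by Lemma~\ref{lemma4-3}. First, I would observe that the claimed inequality is equivalent, after dividing by $\mathrm{vol}(M)^{1-1/(2p)}$ and using the symmetric form of (\ref{IQ-D}) noted in the remark following Lemma~\ref{lemma4-3}, to the single lower bound $\mathrm{Is}(p)\geq c_{10}(n,p,\lambda_{\mathrm{inf}},D)$. So it suffices to prove this lower bound.

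Let $\Omega_0$ be the minimal current supplied by Lemma~\ref{lemma4-3} and set $A_0=\mathrm{vol}(\partial\Omega_0)$, $V_0=\mathrm{vol}(\Omega_0)\leq\mathrm{vol}(M)/2$. Lemma~\ref{lemma4-3}(1) guarantees that $\widetilde{\partial\Omega_0}$ satisfies the regularity property required by Theorem~\ref{theorem3-13}, while Lemma~\ref{lemma4-3}(2) supplies the constancy of the mean curvature $H$ on $\widetilde{\partial\Omega_0}$ together with the estimate $|H|\leq\frac{2p-1}{2p(n-1)}\cdot A_0/V_0$. I would then apply Theorem~\ref{theorem3-13} with $N=\widetilde{\partial\Omega_0}$ and $R=D$. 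The hypothesis (\ref{4-4-2}) forces the normal tube $\mathcal{T}(\widetilde{\partial\Omega_0},D)$ to exhaust $M$ up to a set of Hausdorff measure zero; using $\mathrm{vol}(\mathcal{T})=\mathrm{vol}(M)$ together with the constancy of $H$ rewrites the conclusion of Theorem~\ref{theorem3-13} as
\begin{equation*}
\mathrm{vol}(M)\leq \bigl(e^{c_{7}\sqrt{|\lambda_{\mathrm{inf}}|}D}-1\bigr)\Bigl[\tfrac{2A_0}{c_{7}\sqrt{|\lambda_{\mathrm{inf}}|}}+\tfrac{(n-1)^{2p-1}A_0|H|^{2p-1}}{(c_{7}\sqrt{|\lambda_{\mathrm{inf}}|})^{2p}}+\tfrac{(k_{-}(p,\widetilde{\partial\Omega_0},\lambda(t)))^{p}}{((n-1)\sqrt{|\lambda_{\mathrm{inf}}|})^{p}}\Bigr].
\end{equation*}
Condition (\ref{4-4-1}) is calibrated so that the third bracketed term, after multiplication by $e^{c_{7}\sqrt{|\lambda_{\mathrm{inf}}|}D}-1$, is at most $\mathrm{vol}(M)/2$; subtracting it leaves $\mathrm{vol}(M)/2$ on the left, bounded solely by the area and mean-curvature terms on the right.

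Next I would substitute the mean curvature bound $|H|^{2p-1}\leq\bigl(\tfrac{2p-1}{2p(n-1)}\bigr)^{2p-1}(A_0/V_0)^{2p-1}$ and split into two cases according to whether the dimensionless quantity $\beta:=(n-1)|H|/(c_{7}\sqrt{|\lambda_{\mathrm{inf}}|})$ is at most $2^{1/(2p-1)}$ or not. When $\beta\leq 2^{1/(2p-1)}$, the bracket $2+\beta^{2p-1}$ is at most $4$, so the area term alone dominates $\mathrm{vol}(M)/2$; a rearrangement together with $V_0\leq\mathrm{vol}(M)/2$ and the identity $A_0=\mathrm{Is}(p)\,V_0^{1-1/(2p)}\mathrm{vol}(M)^{1/(2p)}$ gives $\mathrm{Is}(p)\geq c_{7}\sqrt{|\lambda_{\mathrm{inf}}|}\cdot\tfrac{1}{4}(e^{c_{7}\sqrt{|\lambda_{\mathrm{inf}}|}D}-1)^{-1}$. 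When $\beta>2^{1/(2p-1)}$ the mean-curvature term dominates; substituting the upper bound on $|H|$ and again using the identity for $A_0$ produces $\mathrm{Is}(p)\geq c_{7}\sqrt{|\lambda_{\mathrm{inf}}|}\cdot 2^{-1/(2p-1)}$. Taking the minimum of the two cases recovers $c_{10}(n,p,\lambda_{\mathrm{inf}},D)$, and feeding this lower bound on $\mathrm{Is}(p)$ back into the definition (\ref{IQ-D}) finishes the proof.

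The main obstacle I anticipate is the algebraic bookkeeping in the two-case analysis: the exponents arising from $|H|^{2p-1}$, from the mean-curvature bound of Lemma~\ref{lemma4-3}, and from the identity $A_0=\mathrm{Is}(p)\,V_0^{1-1/(2p)}\mathrm{vol}(M)^{1/(2p)}$ must balance exactly so as to yield the constants $2^{-1/(2p-1)}$ and $\tfrac{1}{4}(e^{c_{7}\sqrt{|\lambda_{\mathrm{inf}}|}D}-1)^{-1}$ with the first powers of $(e^{c_{7}\sqrt{|\lambda_{\mathrm{inf}}|}D}-1)$ stated in $c_{10}$, rather than the fractional powers that would come out of a naive term-by-term split of $\mathrm{vol}(M)/2$.
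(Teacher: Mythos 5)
Your overall strategy is exactly the paper's: reduce to the bound $\mathrm{Is}(p)\geq c_{10}$, apply Theorem \ref{theorem3-13} to the Gallot minimal current $\Omega_0$ with $R=D$ so that the tube exhausts $M$, and use (\ref{4-4-1}) to absorb the curvature term and leave $\mathrm{vol}(M)/2$ on the left. Up to that point the proposal matches the paper's proof step for step (the paper then simply asserts that inserting $\mathrm{vol}(\Omega_0)\leq\mathrm{vol}(M)/2$ yields $\mathrm{Is}(p)\geq c_{10}$ without detail).

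The final two-case split on $\beta:=(n-1)|H|/(c_{7}\sqrt{|\lambda_{\mathrm{inf}}|})$, however, does not work as stated, and the second case contains a genuine gap. When $\beta>2^{1/(2p-1)}$, Lemma \ref{lemma4-3}(2) converts this into a lower bound on $A_0/V_0$, namely $A_0/V_0>\tfrac{2p}{2p-1}2^{1/(2p-1)}c_{7}\sqrt{|\lambda_{\mathrm{inf}}|}$. But $\mathrm{Is}(p)=\tfrac{A_0}{V_0}\bigl(\tfrac{V_0}{\mathrm{vol}(M)}\bigr)^{1/(2p)}$, and the factor $\bigl(\tfrac{V_0}{\mathrm{vol}(M)}\bigr)^{1/(2p)}$ has no positive lower bound (nothing prevents $V_0$ from being tiny compared with $\mathrm{vol}(M)$), so a lower bound on $A_0/V_0$ does not translate into $\mathrm{Is}(p)\geq c_{7}\sqrt{|\lambda_{\mathrm{inf}}|}2^{-1/(2p-1)}$. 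If instead you bound the $H$-term by $\bigl(\tfrac{2p-1}{2p}\bigr)^{2p-1}\mathrm{Is}(p)^{2p}\mathrm{vol}(M)/(c_7\sqrt{|\lambda_{\mathrm{inf}}|})^{2p}$ and let it dominate, you get $\mathrm{Is}(p)\gtrsim E^{-1/(2p)}$ with $E:=e^{c_{7}\sqrt{|\lambda_{\mathrm{inf}}|}D}-1$, i.e.\ precisely the fractional power you were trying to avoid. (Your first case also only yields $\tfrac14 2^{-1/(2p)}E^{-1}$ rather than $\tfrac14 E^{-1}$.) The clean fix is a contrapositive argument with both thresholds assumed simultaneously: set $X:=\mathrm{Is}(p)/(c_{7}\sqrt{|\lambda_{\mathrm{inf}}|})$, use $A_0\leq 2^{-1+\frac{1}{2p}}\mathrm{Is}(p)\mathrm{vol}(M)$ to rewrite the surviving inequality as $\tfrac12\leq E\bigl[2^{\frac{1}{2p}}X+\bigl(\tfrac{2p-1}{2p}\bigr)^{2p-1}X^{2p}\bigr]$, and suppose $X<\min\{2^{-1/(2p-1)},\tfrac14E^{-1}\}$. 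Then $X^{2p}=X\cdot X^{2p-1}<X/2$, so the right-hand side is bounded by $2^{\frac{1}{2p}}/4+1/8<\sqrt2/4+1/8<1/2$ (using $p>n/2\geq1$), a contradiction; this delivers exactly the stated $c_{10}$.
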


\begin{proof}
Combining Lemma \ref{lemma4-3} and Theorem \ref{theorem3-13}, one
can obtain
\begin{eqnarray*}
&&\mathrm{vol}(M)\leq
\left(e^{c_{7}(n,p)\sqrt{|\lambda_{\mathrm{inf}}|}D}-1\right)\Bigg{[}\frac{2}{c_{7}(n,p)\sqrt{|\lambda_{\mathrm{inf}}|}}\mathrm{vol}(\partial\Omega_0)\\
&& \qquad \qquad \qquad
+\left(\frac{2p-1}{2p}\right)^{2p-1}\left(\frac{\mathrm{Is}(p)}{c_{7}(n,p)\sqrt{|\lambda_{\mathrm{inf}}|}}\right)^{2p}
\mathrm{vol}(M)\\
&& \qquad \qquad \qquad +
\left((n-1)\sqrt{|\lambda_{\mathrm{inf}}|}\right)^{-p}\cdot
\left(k_{-}\left(p,N,\lambda(t),R\right)\right)^{p}\Bigg{]},
\end{eqnarray*}
which, together with the assumption (\ref{4-4-1}), implies
\begin{eqnarray*}
&&\frac{\mathrm{vol}(M)}{2}\leq
\left(e^{c_{7}(n,p)\sqrt{|\lambda_{\mathrm{inf}}|}D}-1\right)\Bigg{[}\frac{2}{c_{7}(n,p)\sqrt{|\lambda_{\mathrm{inf}}|}}\mathrm{vol}(\partial\Omega_0)\\
&& \qquad \qquad \qquad
+\left(\frac{2p-1}{2p}\right)^{2p-1}\left(\frac{\mathrm{Is}(p)}{c_{7}(n,p)\sqrt{|\lambda_{\mathrm{inf}}|}}\right)^{2p}
\mathrm{vol}(M)\Bigg{]}.
\end{eqnarray*}
Putting the fact
$\mathrm{vol}(\Omega_0)\leq\frac{\mathrm{vol}(M)}{2}$ into the above
inequality yields $\mathrm{Is}(p)\geq
c_{10}(n,p,\lambda_{\mathrm{inf}},D)$, which implies the conclusion
of Theorem \ref{theorem4-4} directly.
\end{proof}

 At the end of this section, by applying Theorem \ref{theorem3-15},
we would like to give a nice sharper estimate for the infimum of the
spectrum of the Laplacian $\Delta$ on a complete noncompact
manifold.

Given an $n$-dimensional ($n\geq2$) complete noncompact manifold $M$
with the metric $g$, denote by
$\inf\left(\mathrm{spec}(\Delta,M,g)\right)$ the infimum of the
spectrum of $\Delta$ on $(M,g)$.  Donnelly \cite{hdo} proved that
 \begin{eqnarray} \label{d-4-1}
\inf\left(\mathrm{spec}(\Delta,M,g)\right)\leq\frac{(n-1)^2}{4}\|r_{-}\|_{L^{\infty}},
 \end{eqnarray}
where
 \begin{eqnarray*}
 r_{-}(x):=\sup\left\{0,-\inf\limits_{X\in
 T_{x}M\setminus\{0\}}\frac{\mathrm{Ric}(X,X)}{(n-1)g(X,X)}\right\}.
 \end{eqnarray*}
Clearly, if
$\mathrm{Ric}(\cdot,\cdot)\geq-(n-1)\alpha^{2}g(\cdot,\cdot)$ for
some constant $\alpha$, i.e., $\|r_{-}\|_{L^{\infty}}=\alpha^2$,
then one has
$$\inf\left(\mathrm{spec}(\Delta,M,g)\right)\leq\frac{(n-1)^2}{4}\alpha^{2}.$$
Gallot \cite[Proposition 7]{sg} extended Donnelly's conclusion
(\ref{d-4-1}) to the situation that the upper bound can be given in
terms of the norm $\|r_{-}\|_{L^{p/2}}$, $p>n$. Besides, he also
showed that this upper bound estimate is \emph{sharp} for the
hyperbolic space $\mathbb{H}^{n}(-\alpha^2)$, which coincides with
Mckean's result \cite{hpm} that the spectrum of $\Delta$ on
$\mathbb{H}^{n}(-\alpha^2)$ is
$\mathrm{spec}(\Delta,\mathbb{H}^{n}(-\alpha^2),\cdot)=[\frac{(n-1)^2}{4}\alpha^{2},\infty)$.
The above Donnelly's and Mckean's results can be improved to a more
general setting. In fact, we can prove:

\begin{theorem}  \label{theorem4-5}
Assume that $M$ is an $n$-dimensional ($n\geq2$) complete
$n$-manifold with the metric $g$ and non finite volume, $q\in M$,
and $\lambda(t)$ is a function on $M$ satisfying the property
\textbf{P1}. For any $p>\frac{n}{2}$, we have
\begin{eqnarray} \label{4-5-1}
\inf\left(\mathrm{spec}(\Delta,M,g)\right)&\leq&\frac{1}{4}\left(\limsup\limits_{R\rightarrow\infty}\frac{\mathrm{vol}(\partial
B(q,R))}{\mathrm{vol}(B(q,R))}\right)^{2}\nonumber\\
&\leq&
\left(\frac{c_{8}(n,p)}{2\sqrt{n-1}}\right)^{2}\limsup\limits_{R\rightarrow\infty}\overline{k_{-}}\left(p,q,0,R\right),
\end{eqnarray}
where $c_{8}(n,p)$ is given in Theorem \ref{theorem3-15}, and,
similar as in Definition \ref{def1-1},
$\overline{k_{-}}\left(p,q,0,R\right)$ is the average of the Type-I
integral radial Ricci curvature $k_{-}\left(p,q,0,R\right)$ w.r.t.
$q$. In particular, if
 \begin{eqnarray*}
k_{-}\left(p,q,\alpha^{2}\right):&=&\limsup\limits_{R\rightarrow\infty}k_{-}\left(p,q,-\alpha^{2},R\right)\\
&=&
\limsup\limits_{R\rightarrow\infty}\left(\int_{B(q,R)}\left|\min\left\{0,\mathrm{Ric}(v_{x},v_{x})+(n-1)\alpha^{2}\right\}\right|^{p}dv\right)^{\frac{1}{p}}\\
&=&\left(\int_{M}\left|\min\left\{0,\mathrm{Ric}(v_{x},v_{x})+(n-1)\alpha^{2}\right\}\right|^{p}dv\right)^{\frac{1}{p}}
 \end{eqnarray*}
 is finite, i.e., $k_{-}\left(p,q,\alpha^{2}\right)<\infty$, then we
 have
  \begin{eqnarray} \label{4-5-1-1}
\inf\left(\mathrm{spec}(\Delta,M,g)\right)\leq\left(\frac{c_{8}(n,p)}{2}\right)^{2}\alpha^{2}.
  \end{eqnarray}
\end{theorem}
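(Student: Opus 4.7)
\textbf{Proof proposal for Theorem~\ref{theorem4-5}.}

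The plan is to chain a Brooks-type variational upper bound of $\lambda_{0}(M)$ by the exponential volume growth with the integral-curvature control on volume growth given by Theorem~\ref{theorem3-15}, and then to deduce the refined statement~(\ref{4-5-1-1}) by a Minkowski-type reduction to~(\ref{4-5-1}). Throughout, write $V(R):=\mathrm{vol}(B(q,R))$ and $\mu:=\limsup_{R\to\infty}\log V(R)/R$.

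To obtain the left inequality of~(\ref{4-5-1}), I would insert the test functions $f_{\beta,R}(x)=e^{-\beta\widehat{r}(x)/2}\phi(\widehat{r}(x)/R)$ into the Rayleigh quotient $\inf\{\int_{M}|\nabla f|^{2}\,dv/\int_{M}f^{2}\,dv:\,f\in C_{0}^{\infty}(M),\,f\not\equiv0\}$, where $\phi$ is a smooth $[0,1]$-valued cut-off equal to $1$ on $[0,1/2]$ and $0$ on $[1,\infty)$, and $\beta>\mu$. Since $|\nabla\widehat{r}|=1$ almost everywhere, expanding $|\nabla f_{\beta,R}|^{2}$ yields a main term $(\beta/2)^{2}e^{-\beta\widehat{r}}\phi^{2}$ plus cross/remainder terms supported on the shell $\{\widehat{r}\in[R/2,R]\}$. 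The choice $\beta>\mu$ guarantees $\int_{M}e^{-\beta\widehat{r}}\,dv<\infty$, so sending $R\to\infty$ then $\beta\downarrow\mu$ annihilates the remainders and produces $\lambda_{0}(M)\leq\mu^{2}/4$. The co-area identity $V'(R)=\mathrm{vol}(\partial B(q,R))$ together with the elementary Ces\`aro-type bound $\mu=\limsup_{R}\frac{1}{R}\int_{0}^{R}\frac{V'(s)}{V(s)}\,ds\leq\limsup_{R}V'(R)/V(R)$ then produces the first inequality of~(\ref{4-5-1}).

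For the right inequality of~(\ref{4-5-1}), I would specialise Theorem~\ref{theorem3-15}(2) to $N=\{q\}$ (so $\mathcal{T}(N,R)=B(q,R)$) and $s=0$ (so $L_{s}=\log$), which reads
\begin{equation*}
\limsup_{R\to\infty}\frac{\mathrm{vol}(\partial B(q,R))}{V(R)}\leq c_{8}(n,p)\limsup_{R\to\infty}\frac{k_{-}(p,q,0,R)^{1/2}}{V(R)^{1/(2p)}}=c_{8}(n,p)\limsup_{R\to\infty}\overline{k_{-}}(p,q,0,R)^{1/2},
\end{equation*}
the last equality being immediate from Definition~\ref{def1-1}. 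Squaring, dividing by $4$, and tracking the constants --- in particular the factor $(n-1)^{-1}$ that enters because the proof of Theorem~\ref{theorem3-15} (via Theorem~\ref{theorem3-13}) works with the normalised quantity $h=\max\{0,-\mathrm{Ric}/(n-1)\}$ whereas $\rho$ in Definition~\ref{def1-1} carries an explicit $(n-1)$ --- yields the claimed upper bound $(c_{8}(n,p)/(2\sqrt{n-1}))^{2}\limsup_{R}\overline{k_{-}}(p,q,0,R)$.

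For the refined bound~(\ref{4-5-1-1}) I would reduce to~(\ref{4-5-1}). Starting from the trivial pointwise inequality $|\min\{0,\mathrm{Ric}(v_{x},v_{x})\}|\leq(n-1)\alpha^{2}+|\min\{0,\mathrm{Ric}(v_{x},v_{x})+(n-1)\alpha^{2}\}|$ on $M\setminus(Cut(q)\cup\{q\})$ and combining it with the sharpened Minkowski inequality $(a+b)^{p}\leq(1+\varepsilon)a^{p}+C(\varepsilon,p)b^{p}$ ($\varepsilon>0$), integration over $B(q,R)$ followed by division by $V(R)$ gives
\begin{equation*}
\overline{k_{-}}(p,q,0,R)^{p}\leq(1+\varepsilon)(n-1)^{p}\alpha^{2p}+C(\varepsilon,p)\,\frac{k_{-}(p,q,-\alpha^{2},R)^{p}}{V(R)}.
\end{equation*}
The hypotheses $k_{-}(p,q,\alpha^{2})<\infty$ and $\mathrm{vol}(M)=\infty$ drive the second term to zero as $R\to\infty$, and letting $\varepsilon\downarrow 0$ yields $\limsup_{R}\overline{k_{-}}(p,q,0,R)\leq(n-1)\alpha^{2}$; feeding this into~(\ref{4-5-1}) produces exactly $(c_{8}(n,p)/2)^{2}\alpha^{2}$. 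The main obstacle I anticipate is the constant-tracking in the second step: because the proof of Theorem~\ref{theorem3-15} is carried out in terms of $h$ whereas its statement is expressed in terms of $k_{-}$, pinning down the correct power of $(n-1)$ inside the $\sqrt{n-1}$ factor requires retracing the Jacobi-type inequality~(\ref{3-13-1}); the Brooks argument and the Minkowski reduction are otherwise standard.
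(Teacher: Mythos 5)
Your proposal takes essentially the same route as the paper's own proof: a radial Rayleigh--quotient upper bound governed by $c_{11}:=\limsup_{R\to\infty}\mathrm{vol}(\partial B(q,R))/\mathrm{vol}(B(q,R))$, then Theorem~\ref{theorem3-15}(2) with $s=0$, $N=\{q\}$ to bound $c_{11}$ by $\overline{k_{-}}$, then a Minkowski-type reduction for~(\ref{4-5-1-1}). The paper's version of the first step is slightly leaner than yours: it plugs $u(t)=e^{-(c_{11}+2\epsilon)t/2}$ directly into the radial Rayleigh quotient~(\ref{4-5-2}) and verifies $u\in W^{1,2}(M)$ from the implied exponential volume bound, with no cut-off $\phi(\widehat{r}/R)$ and no intermediate quantity $\mu$; either way the conclusion $\lambda_{0}\leq c_{11}^{2}/4$ is the same, and your Ces\`aro bound $\mu\leq c_{11}$ is correct.

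On the constant-tracking worry you flag: you are right to be suspicious. A literal application of Theorem~\ref{theorem3-15}(2) with $c_{8}(n,p)$ taken at face value yields $\frac{1}{4}c_{8}^{2}\limsup_{R\to\infty}\overline{k_{-}}(p,q,0,R)$, not $\left(\frac{c_{8}}{2\sqrt{n-1}}\right)^{2}\limsup_{R\to\infty}\overline{k_{-}}(p,q,0,R)$, and the paper's own proof of Theorem~\ref{theorem4-5} is silent on this step (it just says the bound follows ``directly''). The missing $\sqrt{n-1}$ is not a new conversion to be made here; it is hidden in the stated formula for $c_{8}$. Retracing (\ref{3-15-1}) together with the definition of $c_{7}$, the identity $\rho=(n-1)h$, and $\int h^{p}\,dv=(n-1)^{-p}k_{-}^{p}$ gives a prefactor of $k_{-}^{1/2}$ whose $(n-1)$-exponent is $\frac{p-1}{2p}$, whereas $c_{8}$ as printed carries $(n-1)^{\frac{2p-1}{2p}}$, i.e.\ an extra factor $(n-1)^{1/2}$. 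The two discrepancies cancel, so (\ref{4-5-1}) and its sharpness as $p\to\infty$ for $\mathbb{H}^{n}(-\alpha^{2})$ survive; your diagnosis of where the factor comes from is essentially correct, and the rest of your proposal is sound.
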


\begin{proof}
By Rayleigh's theorem and the Max-min principle, we know that for
any function
$u(t)=u\left(d_{M}(q,\cdot)\right):(0,\infty)\mapsto\mathbb{R}$ with
\begin{eqnarray*}
\int_{0}^{\infty}\left[\left(u'(t)\right)^{2}+\left(u(t)\right)^{2}\right]\cdot\mathrm{vol}(\partial
B(q,t))dt<\infty,
\end{eqnarray*}
one has $u(t)\in W^{1,2}(M)$ and
\begin{eqnarray} \label{4-5-2}
\inf\left(\mathrm{spec}(\Delta,M,g)\right)\leq\frac{\int_{0}^{\infty}\left(u'(t)\right)^{2}\cdot\mathrm{vol}(\partial
B(q,t))dt}{\int_{0}^{\infty}u^{2}(t)\cdot\mathrm{vol}(\partial
B(q,t))dt},
\end{eqnarray}
where $W^{1,2}(M)$ is the completion of the set $C^{\infty}(M)$ of
smooth functions under the Sobolev norm
$\|\phi\|_{1,2}(M):=\left[\int_{M}\left(\|\nabla\phi\|^{2}+\phi^{2}\right)dv\right]^{1/2}$.

Set
\begin{eqnarray*}
c_{11}=\limsup\limits_{R\rightarrow\infty}\frac{\mathrm{vol}(\partial
B(q,R))}{\mathrm{vol}(B(q,R))}.
\end{eqnarray*}
For any positive constant $\epsilon>0$, there exists some
$R_{0}=R_{0}(\epsilon)$ such that for any $R\geq R_{0}$,
\begin{eqnarray*}
\mathrm{vol}(\partial
B(q,R))\leq(c_{11}+\epsilon)\cdot\mathrm{vol}(B(q,R)).
\end{eqnarray*}
Integrating the above inequality over $[R_0,R)$ gives
\begin{eqnarray*}
\mathrm{vol}(B(q,R))\leq
e^{(c_{11}+\epsilon)(R-R_{0})}\mathrm{vol}(B(q,R_{0})),
\end{eqnarray*}
which implies
\begin{eqnarray*}
\mathrm{vol}(\partial
B(q,R))\leq(c_{11}+\epsilon)e^{(c_{11}+\epsilon)(R-R_{0})}\mathrm{vol}(B(q,R_{0})).
\end{eqnarray*}
Since
\begin{eqnarray*}
&&\qquad
\int_{0}^{\infty}\left[\left(\frac{c_{11}+2\epsilon}{2}\cdot
e^{-(c_{11}+2\epsilon)t/2}\right)^{2}+\left(e^{-(c_{11}+2\epsilon)t/2}\right)^{2}\right]\cdot\mathrm{vol}(\partial
B(q,t))dt\\
&&=\left[\frac{(c_{11}+2\epsilon)^2}{4}+1\right]\cdot\int_{0}^{\infty}e^{-(c_{11}+2\epsilon)t}\cdot\mathrm{vol}(\partial
B(q,t))dt\\
&&=
\left[\frac{(c_{11}+2\epsilon)^2}{4}+1\right]\cdot\Bigg{[}\int_{0}^{R_0}e^{-(c_{11}+2\epsilon)t}\cdot\mathrm{vol}(\partial
B(q,t))dt+\int_{R_0}^{\infty}e^{-(c_{11}+2\epsilon)t}dt \\
 && \qquad \times
(c_{11}+\epsilon)e^{(c_{11}+\epsilon)(R-R_{0})}\mathrm{vol}(B(q,R_{0}))\Bigg{]}\\
&&<\infty,
\end{eqnarray*}
the function $u(t)=e^{-(c_{11}+2\epsilon)t/2}$, $t>0$, belongs to
the Sobolev space $W^{1,2}(M)$. Substituting
$u(t)=e^{-(c_{11}+2\epsilon)t/2}$ into (\ref{4-5-2}) results into
\begin{eqnarray*}
\inf\left(\mathrm{spec}(\Delta,M,g)\right)\leq\frac{(c_{11}+2\epsilon)^2}{4},
\end{eqnarray*}
which, by letting $\epsilon\rightarrow0$, implies
\begin{eqnarray*}
\inf\left(\mathrm{spec}(\Delta,M,g)\right)\leq\frac{c_{11}^{2}}{4}=\frac{1}{4}\left(\limsup\limits_{R\rightarrow\infty}\frac{\mathrm{vol}(\partial
B(q,R))}{\mathrm{vol}(B(q,R))}\right)^{2}.
\end{eqnarray*}
Together with (2) of Theorem \ref{theorem3-15}, where $s=0$, one can
get the second inequality in the expression (\ref{4-5-1}) directly.

Since
 \begin{eqnarray*}
&&\limsup\limits_{R\rightarrow\infty}\overline{k_{-}}\left(p,q,0,R\right)=\limsup\limits_{R\rightarrow\infty}\left(\int_{B(q,R)}\left|\min\left\{0,\mathrm{Ric}(v_{x},v_{x})\right\}\right|^{p}dv\right)^{\frac{1}{p}}\\
&& \qquad
\leq\limsup\limits_{R\rightarrow\infty}\left[\frac{1}{\mathrm{vol}(B(q,R))}\int_{B(q,R)}\left(\left|\min\left\{0,\mathrm{Ric}(v_{x},v_{x})+(n-1)\alpha^{2}\right\}\right|
+(n-1)\alpha^{2}\right)^{p}dv\right]^{\frac{1}{p}}\\
&&\qquad =(n-1)\alpha^{2},
 \end{eqnarray*}
then the second assertion of Theorem \ref{theorem4-5} follows by
substituting the above inequality into (\ref{4-5-1}) directly.
\end{proof}

\begin{remark} \label{remark4-6}
\rm{ (1) Since $\lim_{p\rightarrow\infty}c_{8}(n,p)=n-1$, our upper
bound estimate (\ref{4-5-1-1}) is sharp for the hyperbolic $n$-space
$\mathbb{H}^{n}(-\alpha^2)$. \\
 (2)  Let
$\lambda_{1}^{D}(B(q,R))$ be the first Dirichlet eigenvalue of the
Laplacian $\Delta$ on $B(q,R)\subset M$. By domain monotonicity of
eigenvalues with vanishing Dirichlet data (cf. \cite[pages
17-18]{ic}), one knows that $\lambda_{1}^{D}(B(q,R))$ decreases as
$R$ increases, and moreover, one can define the limit
 \begin{eqnarray*}
  \lambda_{1}(M):=\lim\limits_{R\rightarrow\infty}\lambda_{1}^{D}(B(q,R)),
  \end{eqnarray*}
which is independent of the choice of the point $q\in M$. In
general, $\lambda_{1}(M)\geq0$. Schoen and Yau \cite[page 106]{sy}
suggested that it is an important question to find conditions such
that $\lambda_{1}(M)>0$. Speaking in other words, open manifolds
with $\lambda_{1}(M)>0$ might have some special geometric
properties. There are many interesting results supporting this. For
instance,
 Mckean \cite{hpm}
showed that for an $n$-dimensional complete noncompact, simply
connected Riemannian manifold $M$ with sectional curvature
$K\leq-\alpha^{2}<0$,
$\lambda_{1}(M)\geq\frac{(n-1)^{2}\alpha^{2}}{4}>0$, and moreover,
$\lambda_{1}(\mathbb{H}^{n}(-\alpha^{2}))=\frac{(n-1)^{2}\alpha^{2}}{4}$.
Grigor'yan \cite{ag} showed that if $\lambda_{1}(M)>0$, then $M$ is
non-parabolic, i.e., there exists a non-constant bounded subharmonic
function on $M$. Cheung and Leung \cite{cl} proved that if $M$ is an
$n$-dimensional complete minimal submanifold in the hyperbolic
$m$-space $\mathbb{H}^{m}(-1)$, then
$\lambda_{1}(M)\geq\frac{(n-1)^2}{4}>0$, and moreover, $M$ is
non-parabolic. They also showed that if furthermore $M$ has at least
two ends, then there exists on $M$ a non-constant bounded harmonic
function with finite Dirichlet energy.

For the complete noncompact $n$-manifold $M$, the spectrum of
$\Delta$ on $M$ is complicated. It might only have the essential
spectrum or have both the essential spectrum and the discrete
spectrum. For instance, $\Delta$ on $\mathrm{H}^{n}(-\alpha^2)$ or
$\mathbb{R}^n$ only has the essential spectrum, while $\Delta$ on
the $4$-dimensional rotationally symmetric quantum layer constructed
in \cite{m2013} has both the essential spectrum and the discrete
spectrum. Besides, it is easy to know that
$\inf\left(\mathrm{spec}(\Delta,M,g)\right)\leq\lambda_{1}(M)$, and
the equality holds if $\Delta$ only has the essential spectrum.

By using Theorem \ref{theorem4-5}, the fact
$\inf\left(\mathrm{spec}(\Delta,M,g)\right)\leq\lambda_{1}(M)$, and
Cheng's eigenvalue comparison theorems \cite{cheng1} (CECTs for
short),\footnote{ CECTs \cite{cheng1,cheng2} have been extended to
complete manifolds with radial (Ricci or sectional) curvature
bounded (from below or from above) by Freitas, Mao and Salavessa --
see \cite[Theorems 3.6 and 4.4]{fmi} for details. For the case of
the nonlinear $\flat$-Laplacian, Mao also successfully obtained a
Cheng-type eigenvalue comparison for manifolds with radial Ricci
curvature bounded from below -- see \cite[Theorem 3.2]{m2} for
details. However, for manifolds with radial sectional curvature
bounded from above, a Cheng-type eigenvalue comparison might not be
obtained, since Barta's lemma (cf. \cite{JB}) cannot be transplanted
to the case of the nonlinear $\flat$-Laplace operator directly --
for detailed explanation, see the last paragraph of \cite[Subsection
3.2, Chap. 3]{m1}.} we have:

\begin{itemize}

\item \emph{For a given complete noncompact $n$-manifold $(M,g)$, $n\geq2$, if
$\mathrm{Ric}(\cdot,\cdot)\geq-(n-1)\alpha^{2}g(\cdot,\cdot)$ and
$K\leq-\alpha^2$ for some nonzero constant $\alpha$, then
 \begin{eqnarray*}
\inf\left(\mathrm{spec}(\Delta,M,g)\right)\leq\lambda_{1}(M)=\frac{(n-1)^2}{4}\alpha^{2},
 \end{eqnarray*}
 and two equalities can be achieved simultaneously when
 $M$ is isometric to $\mathbb{H}^{n}(-\alpha^2)$.}

\end{itemize}
}
\end{remark}

The nonlinear $\flat$-Laplace operator $\Delta_{\flat}$,
$1<\flat<\infty$, is a natural generalization of the linear
Laplacian $\Delta$, and, in a local coordinate system
$\{x^{1},x^{2},\cdots,x^{n}\}$ on $(M,g)$, is defined by
\begin{eqnarray*}
\Delta_{\flat}u=\mathrm{div}\left(\|\nabla u\|^{\flat-2}\nabla
u\right)=\frac{1}{\sqrt{\|g\|}}\frac{\partial}{\partial
x^{i}}\left(\sqrt{\|g\|}\cdot\|\nabla u\|^{\flat-2}\frac{\partial
u}{\partial x^{j}}\right).
\end{eqnarray*}
Domain monotonicity of eigenvalues with vanishing Dirichlet data
also holds for the first Dirichlet eigenvalue of $\Delta_{\flat}$
(see, e.g., \cite[Lemma 1.1]{dm2}) implies that
$\lambda_{1,\flat}^{D}(B(q,R))$ decreases as $R$ increases, where
$\lambda_{1,\flat}^{D}(B(q,R))$ is the first Dirichlet eigenvalue of
$\Delta_{\flat}$ on the geodesic ball $B(q,R)\subset M$. Besides,
using the variational principle, $\lambda_{1,\flat}^{D}(B(q,R))$ can
be characterized as follows
\begin{eqnarray} \label{4-7-1}
\lambda_{1,\flat}^{D}(B(q,R))=\inf\left\{\frac{\int_{B(q,R)}\|\nabla
u\|^{\flat}dv}{\int_{B(q,R)}|u|^{\flat}dv}\Bigg{|}u\neq0,~u\in
W^{1,\flat}_{0}(B(q,R))\right\},
\end{eqnarray}
where $W^{1,\flat}_{0}(B(q,R))$ is the completion of the set
$C^{\infty}_{0}(B(q,R))$ of smooth functions compactly supported on
$B(q,R)$ under the Sobolev norm
$\|\phi\|_{1,\flat}(B(q,R)):=\left[\int_{B(q,R)}\left(\|\nabla\phi\|^{\flat}+|\phi|^{\flat}\right)dv\right]^{1/\flat}$.
 One can
define the following
\begin{eqnarray*}
 \lambda_{1,\flat}(M):=\lim\limits_{R\rightarrow\infty}\lambda_{1,\flat}^{D}(B(q,R)),
  \end{eqnarray*}
which is independent of the choice of the point $q\in M$. Similar to
Schoen-Yau's question \cite[page 106]{sy} mentioned above, one can
naturally ask ``\emph{for a complete noncompact Riemannian
$n$-manifold $M$, $n\geq2$, when do we have
$\lambda_{1,\flat}(M)>0$?}". We have already obtained some
interesting results (see \cite{dm2,lmwz,MTZ}). For instance, Mao, Tu
and Zeng \cite[Lemma 2]{MTZ} proved that for an $n$-dimensional
($n\geq2$) Hadamard manifold whose sectional curvature satisfies
$K\leq-\alpha^{2}<0$, $\alpha>0$,
\begin{eqnarray*}
\lambda_{1,\flat}(M)\geq\left[\frac{(n-1)\cdot\alpha}{\flat}\right]^{\flat}>0
\end{eqnarray*}
holds, which generalizes Mckean's estimate mentioned above. Clearly,
 generally for a complete noncompact $n$-manifold $M$, $n\geq2$, one has
$\inf\left(\mathrm{spec}(\Delta_{\flat},M,g)\right)\leq\lambda_{1,\flat}(M)$.
Inspired by the experience that some estimates for $\lambda_{1}(M)$
can be extended to the case of $\lambda_{1,\flat}(M)$ mentioned
above, naturally one could ask ``\emph{whether the upper bound
estimate (\ref{4-5-1}) for
$\inf\left(\mathrm{spec}(\Delta,M,g)\right)$ can be extended
similarly to the case of
$\inf\left(\mathrm{spec}(\Delta_{\flat},M,g)\right)$ or not?}". In
fact, we can prove the following:

\begin{corollary} \label{corollary4-7}
Under assumptions of Theorem \ref{theorem4-5}, we have
\begin{eqnarray*}
\inf\left(\mathrm{spec}(\Delta_{\flat},M,g)\right)&\leq&\left(\frac{1}{\flat}\limsup\limits_{R\rightarrow\infty}\frac{\mathrm{vol}(\partial
B(q,R))}{\mathrm{vol}(B(q,R))}\right)^{\flat}\nonumber\\
&\leq&
\left(\frac{c_{8}(n,p)}{\flat\sqrt{n-1}}\cdot\sqrt{\limsup\limits_{R\rightarrow\infty}\overline{k_{-}}\left(p,q,0,R\right)}\right)^{\flat}.
\end{eqnarray*}
In particular, if $k_{-}\left(p,q,\alpha^{2}\right)$ is finite,
$\alpha>0$, then we
 have
 \begin{eqnarray*}
\inf\left(\mathrm{spec}(\Delta_{\flat},M,g)\right)\leq\left(\frac{c_{8}(n,p)\cdot\alpha}{\flat}\right)^{\flat}.
 \end{eqnarray*}
\end{corollary}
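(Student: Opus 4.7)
The plan is to adapt the proof of Theorem \ref{theorem4-5} to the nonlinear setting by replacing the quadratic Rayleigh quotient with the variational characterization (\ref{4-7-1}) of $\lambda_{1,\flat}^{D}(B(q,R))$ and then passing to the limit $R\to\infty$. Since the paper already records $\inf(\mathrm{spec}(\Delta_{\flat},M,g))\leq\lambda_{1,\flat}(M)=\lim_{R\to\infty}\lambda_{1,\flat}^{D}(B(q,R))$, it suffices to produce, for every large $R$, a radial test function $u_{R}\in W^{1,\flat}_{0}(B(q,R))$ whose Rayleigh quotient approaches the target bound.

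Write $c_{11}:=\limsup_{R\to\infty}\mathrm{vol}(\partial B(q,R))/\mathrm{vol}(B(q,R))$. Fix $\epsilon>0$ and, exactly as in the proof of Theorem \ref{theorem4-5}, choose $R_{0}=R_{0}(\epsilon)$ so that $\mathrm{vol}(\partial B(q,t))\leq(c_{11}+\epsilon)e^{(c_{11}+\epsilon)(t-R_{0})}\mathrm{vol}(B(q,R_{0}))$ for every $t\geq R_{0}$. Set $\mu:=(c_{11}+2\epsilon)/\flat$ and let $u_{R}(x):=e^{-\mu\widehat{r}(x)}-e^{-\mu R}$ on $B(q,R)$, extended by zero outside; a standard mollification places $u_{R}$ in $W^{1,\flat}_{0}(B(q,R))$. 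Because $\|\nabla\widehat{r}\|=1$ a.e.\ on $M\setminus(Cut(q)\cup\{q\})$, the coarea formula gives
\begin{eqnarray*}
\frac{\int_{B(q,R)}\|\nabla u_{R}\|^{\flat}dv}{\int_{B(q,R)}|u_{R}|^{\flat}dv}=\mu^{\flat}\cdot\frac{\int_{0}^{R}e^{-\mu\flat t}\mathrm{vol}(\partial B(q,t))dt}{\int_{0}^{R}(e^{-\mu t}-e^{-\mu R})^{\flat}\mathrm{vol}(\partial B(q,t))dt}.
\end{eqnarray*}
The choice $\mu\flat=c_{11}+2\epsilon$ dominates the exponential growth of $\mathrm{vol}(\partial B(q,t))$, so both integrals remain finite as $R\to\infty$ and their ratio tends to $1$. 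Hence $\lambda_{1,\flat}^{D}(B(q,R))\leq\mu^{\flat}(1+o(1))$; sending $R\to\infty$ first and then $\epsilon\to 0^{+}$ produces the first desired inequality $\inf(\mathrm{spec}(\Delta_{\flat},M,g))\leq(c_{11}/\flat)^{\flat}$.

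For the second inequality I would invoke Theorem \ref{theorem3-15}(2) with $s=0$ and $N=\{q\}$; rewriting $k_{-}(p,q,0,R)=(\mathrm{vol}(B(q,R)))^{1/p}\cdot\overline{k_{-}}(p,q,0,R)$ and tracking the $(n-1)$ powers in (\ref{3-15-1}) carefully yields $c_{11}\leq\frac{c_{8}(n,p)}{\sqrt{n-1}}\cdot(\limsup_{R\to\infty}\overline{k_{-}}(p,q,0,R))^{1/2}$, which is the same reduction already used in the proof of Theorem \ref{theorem4-5}. Raising this estimate to the $\flat$-th power after dividing by $\flat$ gives the main assertion. For the special case $k_{-}(p,q,\alpha^{2})<\infty$, the pointwise inequality $|\min\{0,\mathrm{Ric}(v_{x},v_{x})\}|\leq|\min\{0,\mathrm{Ric}(v_{x},v_{x})+(n-1)\alpha^{2}\}|+(n-1)\alpha^{2}$ from the end of the proof of Theorem \ref{theorem4-5} yields $\limsup_{R\to\infty}\overline{k_{-}}(p,q,0,R)\leq(n-1)\alpha^{2}$, which upon substitution collapses the main bound to $(c_{8}(n,p)\alpha/\flat)^{\flat}$.

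The main obstacle is the opening step: verifying that the truncated radial exponential $u_{R}$ is admissible in (\ref{4-7-1}) and that its Rayleigh quotient really does converge to $\mu^{\flat}$ as $R\to\infty$. In the linear case of Theorem \ref{theorem4-5} this is essentially trivial because the unbounded $e^{-\mu t}$ lies directly in $W^{1,2}(M)$, so no cutoff is needed; here one must use compactly supported truncations, check that the kink at $\widehat{r}=R$ is absorbed by mollification, and control the error term generated by subtracting $e^{-\mu R}$ — routine but slightly delicate because of the nonlinearity of $\|\nabla u\|^{\flat}$. Once this technicality is dispatched, the rest of the argument is a verbatim translation of the proof of Theorem \ref{theorem4-5} with the exponent $2$ replaced by $\flat$.
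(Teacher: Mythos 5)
Your proposal is correct and follows essentially the same route as the paper: substitute the radial exponential test function with decay rate $(c_{11}+O(\epsilon))/\flat$ into the $\flat$-Rayleigh quotient, let $\epsilon\rightarrow0$ to get $\inf\left(\mathrm{spec}(\Delta_{\flat},M,g)\right)\leq(c_{11}/\flat)^{\flat}$, and then finish exactly as in Theorem \ref{theorem4-5} via Theorem \ref{theorem3-15}(2) and the bound $\limsup_{R\rightarrow\infty}\overline{k_{-}}\left(p,q,0,R\right)\leq(n-1)\alpha^{2}$. The only (harmless) difference is that you work with the compactly supported truncations $e^{-\mu t}-e^{-\mu R}$ required literally by (\ref{4-7-1}) and pass to the limit in $R$, whereas the paper inserts the global function $e^{-(c_{11}+\flat\epsilon)t/\flat}\in W^{1,\flat}(M)$ directly.
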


\begin{proof}
By (\ref{4-7-1}), it is easy to know
\begin{eqnarray*}
\inf\left(\mathrm{spec}(\Delta_{\flat},M,g)\right)\leq\lambda_{1,\flat}(M)\leq\frac{\int_{0}^{\infty}\left|u'(t)\right|^{\flat}\cdot\mathrm{vol}(\partial
B(q,t))dt}{\int_{0}^{\infty}|u(t)|^{\flat}\cdot\mathrm{vol}(\partial
B(q,t))dt}
\end{eqnarray*}
provided $u(t)\in W^{1,\flat}(M)$. Using a similar argument to that
in the proof of Theorem \ref{theorem4-5} and choosing
$u(t)=e^{-(c_{11}+\flat\epsilon)t/\flat}$, $t>0$, in the above
inequality, we can obtain
 \begin{eqnarray*}
 \inf\left(\mathrm{spec}(\Delta_{\flat},M,g)\right)&\leq&\frac{\int_{0}^{\infty}\left|\frac{c_{11}+\flat\epsilon}{\flat}\cdot
e^{-(c_{11}+\flat\epsilon)t/\flat}\right|^{\flat}\cdot\mathrm{vol}(\partial
B(q,t))dt}{\int_{0}^{\infty}\left|e^{-(c_{11}+\flat\epsilon)t/\flat}\right|^{\flat}\cdot\mathrm{vol}(\partial
B(q,t))dt}\\
&\leq& \left(\frac{c_{11}+\flat\epsilon}{\flat}\right)^{\flat},
 \end{eqnarray*}
 which, by letting $\epsilon\rightarrow0$, implies
\begin{eqnarray*}
 \inf\left(\mathrm{spec}(\Delta_{\flat},M,g)\right)\leq\left(\frac{c_{11}}{\flat}\right)^{\flat}=\left(\frac{1}{\flat}\limsup\limits_{R\rightarrow\infty}\frac{\mathrm{vol}(\partial
B(q,R))}{\mathrm{vol}(B(q,R))}\right)^{\flat}.
\end{eqnarray*}
Then the rest part of the proof is almost the same with that in the
proof of Theorem \ref{theorem4-5}.
\end{proof}

\begin{remark}
\rm{ As mentioned in (1) of Remark \ref{remark4-6}, since
$\lim_{p\rightarrow\infty}c_{8}(n,p)=n-1$, the second estimate in
Corollary \ref{corollary4-7} is sharp for the hyperbolic $n$-space
$\mathbb{H}^{n}(-\alpha^2)$. By using Corollary \ref{corollary4-7},
the fact
$\inf\left(\mathrm{spec}(\Delta_{\flat},M,g)\right)\leq\lambda_{1,\flat}(M)$,
and the Cheng-type eigenvalue comparison \cite[Theorem 3.2]{m2}, we
have:

\begin{itemize}

\item \emph{For a given complete noncompact $n$-manifold $(M,g)$, $n\geq2$, if
$\mathrm{Ric}(\cdot,\cdot)\geq-(n-1)\alpha^{2}g(\cdot,\cdot)$ for
some nonzero constant $\alpha$, then
 \begin{eqnarray*}
\inf\left(\mathrm{spec}(\Delta_{\flat},M,g)\right)\leq\lambda_{1,\flat}(M)\leq\left(\frac{(n-1)\cdot\alpha}{\flat}\right)^{\flat},
 \end{eqnarray*}
 and two equalities can be achieved simultaneously when
 $M$ is isometric to $\mathbb{H}^{n}(-\alpha^2)$.}

\end{itemize}
}
\end{remark}

\section{Compactness, a Cheeger-type estimate for the shortest closed geodesic, and a Buser-type isoperimetric inequality}
\renewcommand{\thesection}{\arabic{section}}
\renewcommand{\theequation}{\thesection.\arabic{equation}}
\setcounter{equation}{0} \setcounter{maintheorem}{0}

As we know that one can define $C^{k+\alpha}$-convergence of tensors
on a given manifold, where $k\geq0$ is an integer, $0<\alpha\leq1$,
and then the convergence concept of a collection of manifolds can be
given -- see, e.g., \cite[page 169]{pp} for details. Since there is
no significant difference between pointed and unpointed topologies
when manifolds in use are closed, in this section we will only focus
on complete or closed Riemannian manifolds. A collection of
Riemannian $n$-manifolds is said to be precompact in
$C^{k+\alpha}$-topology if any sequence in this collection has a
subsequence that is convergent in the $C^{k+\alpha}$-topology
(equivalently, if the $C^{k+\alpha}$-closure is compact).

By using the volume comparison conclusions proven in Section
\ref{VC}, we can get several compactness and finiteness results.
First, by applying Corollary \ref{corollary3-6} directly, if the
average of the Type-I Ricci curvature
$\overline{k_{-}}(\cdot,\cdot,\cdot,\cdot)$ is small enough, we have
the following compactness conclusion.

\begin{corollary} \label{corollary5-1}
Given a class of closed $n$-dimensional ($n\geq2$) Riemannain
manifolds $M$, $q\in M$, real numbers $p>\frac{n}{2}$, $D<\infty$,
and a function $\lambda(t)$ on $M$ satisfying the property
\textbf{P1},  we can find
$\epsilon:=\epsilon\left(n,p,q,\lambda(t),D\right)>0$ such that if
\begin{eqnarray*}
&&\quad\ell(q)\leq D, \qquad\qquad with~\ell(q)~defined~by~(\ref{key-def1}),\\
&&\overline{k_{-}}\left(p,q,\lambda(t),D\right)\leq\epsilon,
\end{eqnarray*}
 then $M\backslash
Cut(q)$ is precompact in the Gromov-Hausdorff topology.
\end{corollary}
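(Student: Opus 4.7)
The plan is to apply Gromov's classical precompactness criterion in the Gromov--Hausdorff category: a family of compact metric spaces is GH-precompact iff it admits a uniform diameter bound and, for each $\delta>0$, a uniform upper bound on the minimum number of $\delta$-balls needed to cover each member of the family.

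First I would verify the diameter bound: $\ell(q)\le D$ means every point of $M$ lies within Riemannian distance $D$ of $q$, so $\mathrm{diam}(M\setminus Cut(q))\le\mathrm{diam}(M)\le 2D$ uniformly across the class. Second, choosing the $\epsilon$ in the statement to be the one produced by Corollary~\ref{corollary3-6} for $\alpha=\tfrac12$, say, the latter yields
\begin{equation*}
\tfrac12\,\frac{\mathrm{vol}(\mathcal{B}_{n}(q^{-},r))}{\mathrm{vol}(\mathcal{B}_{n}(q^{-},D))}\,\mathrm{vol}(M)\le\mathrm{vol}(B(q,r)),\qquad 0<r\le D,
\end{equation*}
while Theorem~\ref{theorem2-1} at $R=D$ supplies the matching upper bound $\mathrm{vol}(M)\le C(n,p,\lambda,D)\,\mathrm{vol}(\mathcal{B}_{n}(q^{-},D))$. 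Together these give explicit two-sided control on $\mathrm{vol}(B(q,r))$ depending only on $(n,p,\lambda,D)$.

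The final step is to convert this volume data, concentrated at the single base point $q$, into the uniform $\delta$-covering count demanded by Gromov's theorem. I would work in the exponential chart $\exp_q:\mathcal{D}_q\to M\setminus Cut(q)$: since $\mathcal{D}_q$ is a star-shaped subset of the Euclidean ball $B^{E}(0,D)\subset T_qM\simeq\mathbb{R}^n$, it carries a Euclidean $\delta$-net of cardinality at most $C(n)(D/\delta)^n$. Using the Jacobi-field identities (\ref{FJ-1})--(\ref{FJ-2}) together with Lemma~\ref{lemma2-2}, the image of this Euclidean net under $\exp_q$ should be a Riemannian $\delta'$-net of $M\setminus Cut(q)$ of the same cardinality, with $\delta'$ depending only on $(\delta,n,p,\lambda,D)$ and $\delta'\to0$ as $\delta\to0$. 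Gromov's theorem then delivers the conclusion.

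The hardest part will be this last transport step: the integral hypothesis on $\overline{k_{-}}$ bounds the Jacobi-field matrix $\mathbb{A}(t,\xi)$ only in an $L^p$ sense in the angular variable $\xi$, whereas a pointwise operator-norm bound on a near-full-measure subset is needed to convert a Euclidean $\delta$-ball into a genuine Riemannian ball of radius $\delta'$. Property~\textbf{P1} (non-positivity of $\lambda$, hence $f(t)\ge t$) anchors the comparison from below; the upper comparison will most likely require iterating the integrated Jacobi inequality (\ref{3-10-1}) from the proof of Theorem~\ref{theorem3-10} and absorbing the exceptional set of directions on which $\|\mathbb{A}\|$ is large into the uniform volume upper bound already in hand.
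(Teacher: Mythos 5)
Your framework (Gromov's precompactness criterion: uniform diameter bound plus uniform $\delta$-covering numbers) and your identification of the volume inputs (the diameter bound from $\ell(q)\le D$, the lower bound from Corollary \ref{corollary3-6}, the upper bound from Theorem \ref{theorem2-1}) match what the paper intends --- its entire written proof is the single remark that the corollary follows ``by applying Corollary \ref{corollary3-6} directly.'' The gap is in your final transport step, which is where all the content lies. To push a Euclidean $\delta$-net in $\mathcal{D}_q$ forward to a Riemannian $\delta'$-net of $M\setminus Cut(q)$ you need an upper bound on the operator norm of $d\exp_q$, i.e.\ a direction-wise upper bound on $\|\mathbb{A}(t,\xi)\|$. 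None of the estimates in the paper supply this: Lemma \ref{lemma2-2}, Lemma \ref{lemma2-4} and (\ref{3-10-1}) control only $J=(\det\mathbb{A})^{1/(n-1)}$, and a lower bound on the radial Ricci curvature --- even a pointwise one, let alone an $L^p$ one --- cannot bound individual Jacobi fields from above: one eigenvalue of $\mathbb{A}$ may grow arbitrarily while another shrinks, leaving both $\det\mathbb{A}$ and $\mathrm{Ric}(\gamma'_\xi,\gamma'_\xi)$ unaffected. ``Absorbing the exceptional directions into the volume bound'' does not repair this, because a net must reach every point of $M\setminus Cut(q)$, including those lying along the bad directions; and iterating (\ref{3-10-1}) cannot help since it, too, is a statement about $J$ only.

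The missing idea is the standard packing argument, which uses nothing but volume (determinant) data and therefore matches exactly what integral radial curvature hypotheses deliver: take a maximal $\delta$-separated set $\{x_1,\dots,x_N\}$; the balls $B(x_i,\delta/2)$ are disjoint, so $N\cdot\min_i\mathrm{vol}\bigl(B(x_i,\delta/2)\bigr)\le\mathrm{vol}(M)$, while the balls $B(x_i,\delta)$ cover; a uniform lower bound on $\mathrm{vol}\bigl(B(x_i,\delta/2)\bigr)/\mathrm{vol}(M)$ depending only on $(n,p,\lambda,D,\delta)$ then bounds $N$, and Gromov's criterion applies with no metric control of $\exp_q$ whatsoever. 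This is what ``applying Corollary \ref{corollary3-6} directly'' means. Had you taken this route you would also have hit the one genuinely delicate point, which your proposal does not touch either: Corollary \ref{corollary3-6} controls only balls centered at the distinguished point $q$, whereas the packing argument needs lower volume bounds at the arbitrary centers $x_i$; bridging this requires a further argument (e.g.\ comparing $B(x_i,\delta/2)$ with a radial sector or geodesic cone from $q$, in the spirit of Theorem \ref{theorem3-10} and the proof of Theorem \ref{theorem4-1}), since the radial hypothesis at $q$ gives no Bishop--Gromov comparison based at $x_i$.
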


We also would like to give another interesting application of volume
estimates obtained in Section \ref{VC}.

Applying the upper bound estimate for volume of normal tubes of a
given geodesic $\mathcal{C}$ (cf. Theorem \ref{theorem3-9}), if the
Type-II integral radial sectional curvature w.r.t. $\mathcal{C}$ was
assumed to be small enough, we can bound from below the length of
the shortest closed geodesic, which generalizes Cheeger's related
estimate shown in \cite{c1}.

\begin{theorem}  \label{theorem5-2}
Given a closed $n$-dimensional ($n\geq2$) Riemannain manifold $M$,
the shortest closed geodesic $\mathcal{C}$ on $M$, $p>n-1$, $w>0$,
$D<\infty$, and a function $\lambda(t)$ on $M$ satisfying the
property \textbf{P2},  we can find
$\epsilon:=\epsilon(n,p,\lambda(t),w,D)>0$ and
$\delta:=\delta(n,p,\lambda(t),w,D)>0$ such that if
\begin{eqnarray*}
&&\quad \ell(\mathcal{C})\leq D,\\
&& \quad \mathrm{vol}(M)\geq w,\\
&&
k^{\ast}_{+}(p,\mathcal{C},\lambda(t),D)\leq\epsilon(n,p,\lambda(t),w,D),
\end{eqnarray*}
with, similarly, $\ell(\mathcal{C}):=\sup_{q\in\mathcal{C}}\ell(q)$,
then the length $\mathbb{L}_\mathcal{C}$ of $\mathcal{C}$ satisfies
$\mathbb{L}_\mathcal{C}\geq\delta(n,p,\lambda(t),w,D)$.
\end{theorem}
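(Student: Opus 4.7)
The plan is to argue by contradiction, using the upper bound for the volume of a normal tube from Theorem \ref{theorem3-9}, and exploiting the hypothesis $\ell(\mathcal{C})\leq D$ to force the tube to fill up all of $M$. First, I would observe that since $\ell(\mathcal{C}):=\sup_{q\in\mathcal{C}}\ell(q)\leq D$, every point of $M\setminus Cut(\mathcal{C})$ lies within normal distance $\leq D$ of $\mathcal{C}$, and since $Cut(\mathcal{C})$ has zero $n$-dimensional Hausdorff measure, the normal tube covers $M$ up to a null set:
\begin{eqnarray*}
\mathrm{vol}(\mathcal{T}(\mathcal{C},D))=\mathrm{vol}(M)\geq w.
\end{eqnarray*}

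Second, Theorem \ref{theorem3-9} applies with $N=\mathcal{C}$ (a closed geodesic is, in the sense used there, a $1$-dimensional geodesic submanifold of length $\mathbb{L}_{\mathcal{C}}$), and gives
\begin{eqnarray*}
\mathrm{vol}(\mathcal{T}(\mathcal{C},D))\leq\digamma\bigl(n,p,\mathbb{L}_{\mathcal{C}},\lambda_{\inf},k^{\ast}_{+}(p,\mathcal{C},\lambda(t),D),D\bigr),
\end{eqnarray*}
where, as guaranteed by the last assertion of that theorem, $\digamma\to 0$ as $\mathbb{L}_{\mathcal{C}}\to 0$ and $k^{\ast}_{+}(p,\mathcal{C},\lambda(t),D)\to 0$, with all other parameters ($n$, $p$, $\lambda_{\inf}$, $D$) held fixed. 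Combining the two inequalities yields
\begin{eqnarray*}
w\leq\digamma\bigl(n,p,\mathbb{L}_{\mathcal{C}},\lambda_{\inf},k^{\ast}_{+}(p,\mathcal{C},\lambda(t),D),D\bigr).
\end{eqnarray*}

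Third, I would choose the constants $\epsilon$ and $\delta$ by continuity at the origin: since $\digamma(n,p,\cdot,\lambda_{\inf},\cdot,D)$ is continuous in its third and fifth arguments and tends to $0$ as both tend to $0$, there exist $\epsilon=\epsilon(n,p,\lambda(t),w,D)>0$ and $\delta=\delta(n,p,\lambda(t),w,D)>0$ such that
\begin{eqnarray*}
\digamma(n,p,L,\lambda_{\inf},k,D)<w\qquad\text{whenever }0\leq L<\delta\text{ and }0\leq k\leq\epsilon.
\end{eqnarray*}
Then under the three hypotheses of the theorem, the displayed volume inequality forces $\mathbb{L}_{\mathcal{C}}\geq\delta$, for otherwise we would get $w\leq\digamma<w$, a contradiction.

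The main obstacle is essentially a bookkeeping matter: one has to verify that the explicit bound produced in the proof of Theorem \ref{theorem3-9} really does depend continuously on $(\mathbb{L}_{\mathcal{C}},k^{\ast}_{+})$ and vanishes in the joint limit, so that the threshold choice of $\epsilon$ and $\delta$ above is legitimate. Tracing back through the proof of Theorem \ref{theorem3-9}, $\digamma$ has the form $\tfrac{\alpha\gamma+\beta}{\gamma^2}(e^{\gamma D}-1)-\tfrac{\beta}{\gamma}D$ where $a,\widetilde{b},\widetilde{c}$ (and hence $\alpha,\beta,\gamma$) depend polynomially on $(\mathbb{L}_{\mathcal{C}})^{1/(n-2)}$ and $(k^{\ast}_{+})^{p/(2p-1)}$, so $\digamma\to 0$ as both inputs tend to zero, and the required $\epsilon,\delta$ exist. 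A minor subtlety is that if $\mathcal{C}$ were treated as a properly embedded submanifold one would require the regular-point condition used in Section \ref{VC}; here $\mathcal{C}$ is a smooth closed geodesic, so this condition is automatic.
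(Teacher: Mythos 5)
Your argument is correct and follows essentially the same route as the paper's proof: cover $M$ up to a null set by the normal tube $\mathcal{T}(\cdot,D)$ (using $\ell(\mathcal{C})\leq D$), bound its volume by Theorem \ref{theorem3-9}, and exploit the vanishing of $\digamma$ in the joint limit $(\mathbb{L}_{\mathcal{C}},k^{\ast}_{+})\to(0,0)$ to extract $\epsilon$ and $\delta$ by a threshold/contradiction argument. The paper's only cosmetic extra step is to delete a single point from $\mathcal{C}$ to obtain an open geodesic arc $N=\zeta|_{(0,l)}$ before invoking Theorem \ref{theorem3-9}, which changes nothing up to measure zero.
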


\begin{proof}
Parameterize the closed geodesic $\mathcal{C}$ by arc-length as
$\zeta:[0,l]\rightarrow\mathcal{C}$. Consider a geodesic
$N=\zeta|_{(0,l)}$, which is actually obtained by deleting a single
point from $\mathcal{C}$. If $\ell(\mathcal{C})\leq D$, then the
normal tube $\mathcal{T}(N,D)$ w.r.t. $\mathcal{C}=N$ covers $M$
except a set of measure zero (i.e., the cut-locus of that single
point). Hence, one has
$\mathrm{vol}(M)=\mathrm{vol}\left(\mathcal{T}(N,D)\right)$ and
$k^{\ast}_{+}(p,\mathcal{C},\lambda(t),D)=k^{\ast}_{+}(p,N,\lambda(t),D)$.
Together with Theorem \ref{theorem3-9}, it follows that
\begin{eqnarray*}
w\leq\mathrm{vol}(M)=\mathrm{vol}\left(\mathcal{T}(N,D)\right)\leq\digamma(n,p,l,\lambda_{\inf},k_{+}^{\ast}(N),D)
\end{eqnarray*}
for some constant $\digamma$, where, similarly,
$k_{+}^{\ast}(N)=k^{\ast}_{+}(p,N,\lambda(t),D)=k^{\ast}_{+}(p,\mathcal{C},\lambda(t),D)$,
$\lambda_{\inf}=\inf_{(0,D)}\lambda(t)$. Moreover,
$\digamma(n,p,l,\lambda_{\inf},k_{+}^{\ast}(N),D)\rightarrow0$ as
$l, k_{+}^{\ast}(N)\rightarrow0$. Therefore, if
$$w\leq\digamma(n,p,l,\lambda_{\inf},k_{+}^{\ast}(N),D)$$ holds for
some fixed $w$ and $D$, then two nonnegative quantities $l$ or
$k_{+}^{\ast}(N)$ must be bounded from below by some positive
constant. Hence, if
$k_{+}^{\ast}(N)\leq\epsilon(n,p,\lambda(t),w,D)$ was assumed for
some small enough $\epsilon$, then there must exist some positive
constant $\delta$ such that
$$l>\delta(n,p,\lambda(t),w,D),$$
which completes the proof.
\end{proof}

Assume that $M$ is an $n$-dimensional ($n\geq2$) complete Riemannain
manifold, and, as before, denote by $B(q,R)$ the geodesic ball with
center $q\in M$ and radius $R$. The classical Buser's isoperimetric
inequality (see \cite{pb}) says:

\begin{itemize}

\item If the Ricci curvature of $M$ satisfies
$\mathrm{Ric}_{M}\geq(n-1)\lambda$ for some non-positive constant
$\lambda\leq0$, then there exists a positive constant
$c_{12}(n,\lambda,R)$, depending only on $n$, $\lambda$, $R$, such
that for any $q\in M$, a dividing hypersurface $\Gamma$ with
$\overline{\Gamma}$ embedded in $\overline{B(q,R)}$ and
$B(q,R)\setminus\Gamma=D_{1}\cup D_{2}$, we have
 \begin{eqnarray*}
\mathrm{vol}(\Gamma)\geq
c_{12}(n,\lambda,R)\cdot\min\{\mathrm{vol}(D_{1}),\mathrm{vol}(D_{2})\},
 \end{eqnarray*}
where $D_1$ and $D_2$ are two disjoint open sets contained in
$B(q,R)$.

\end{itemize}
Chavel \cite{ic2} extended the above conclusion for a more general
domain, i.e., a star-shaped domain $D$ with
$B\left(q,\frac{R}{2}\right)\subset D\subset B(q,R)$. Under a weaker
curvature assumption (i.e., if the integral Ricci curvature is
bounded from above), by applying the volume comparison \cite[Theorem
1.1]{pw1} obtained by Petersen and Wei, a generalized Buser-type
isoperimetric inequality has been attained by Paeng -- see
\cite[Theorem 1.2]{shp} for details. Here, using our volume estimate
(see Theorem \ref{theorem2-1}), we can get a more general Buser-type
isoperimetric inequality, which somehow improves the corresponding
conclusions in \cite{pb,shp}. In fact, we can prove:

\begin{theorem} \label{theorem5-3}
If $B(q,R)$ is a convex geodesic ball on a given complete Riemannian
$n$-manifold $M$, $n\geq2$, $k_{-}(p,q,\lambda(t),R)\leq K$ for some
nonnegative constant $K\geq0$, $p>\frac{n}{2}$, and  $\lambda(t)$ is
a continuous function on $M$ satisfying the property \textbf{P1},
then there exist positive constants $c_{13}(n,\lambda(t),R)$,
depending only on $n$, $\lambda(t)$, $R$, and
$c_{14}(n,p,\lambda(t),K,R)$, depending on $n$, $p$, $\lambda(t)$,
$K$, $R$, such that for a dividing hypersurface $\Gamma$ in $B(q,R)$
with $\overline{\Gamma}$ embedded in $\overline{B(q,R)}$ and
$B(q,R)\setminus\Gamma=D_{1}\cup D_{2}$, we have
\begin{eqnarray*}
\mathrm{vol}(\Gamma)\geq
c_{13}(n,\lambda(t),R)\cdot\min\{\mathrm{vol}(D_{1}),\mathrm{vol}(D_{2})\}-c_{14}(n,p,\lambda(t),K,R),
\end{eqnarray*}
where $D_1$, $D_2$ are two disjoint open sets contained in $B(q,R)$,
and for any fixed $\alpha\in(0,1)$, the precondition
 \begin{eqnarray} \label{5-3-assume}
\mathrm{vol}\left(D_{i}\cap
B(q,\frac{R}{2})\right)\leq\alpha\cdot\mathrm{vol}(D_i), \qquad
i=1~\mathrm{or}~2,
 \end{eqnarray}
 is satisfied.
\end{theorem}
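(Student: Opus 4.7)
The plan is to adapt Paeng's strategy \cite{shp} for Buser-type inequalities under integral Ricci bounds, replacing the Petersen--Wei volume comparison used there with our sharper Theorem \ref{theorem2-1}. Without loss of generality, assume $\mathrm{vol}(D_{1})\leq\mathrm{vol}(D_{2})$, so that $\min\{\mathrm{vol}(D_{1}),\mathrm{vol}(D_{2})\}=\mathrm{vol}(D_{1})$, and that (\ref{5-3-assume}) holds for $i=1$; then $D_{1}^{\mathrm{out}}:=D_{1}\setminus B(q,R/2)$ has $\mathrm{vol}(D_{1}^{\mathrm{out}})\geq(1-\alpha)\mathrm{vol}(D_{1})$.

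First I would upgrade Theorem \ref{theorem2-1} to a two-sided control on $y(r):=\mathrm{vol}(B(q,r))/\mathrm{vol}(\mathcal{B}_{n}(q^{-},r))$. Rearranging its first conclusion under $k_{-}(p,q,\lambda(t),R)\leq K$ yields, for every $0<r\leq R$,
\begin{eqnarray*}
\left|y(r)^{\frac{1}{2p}}-y(R)^{\frac{1}{2p}}\right|\leq c(n,p,R)\,K^{\frac{1}{2}},
\end{eqnarray*}
hence
\begin{eqnarray*}
\mathrm{vol}(B(q,r))\geq A(n,\lambda(t),r,R)\,\mathrm{vol}(B(q,R))-E(n,p,\lambda(t),K,R),
\end{eqnarray*}
where $A$ depends only on the model-space ratio $\mathrm{vol}(\mathcal{B}_{n}(q^{-},r))/\mathrm{vol}(\mathcal{B}_{n}(q^{-},R))$ and $E$ absorbs all $K$-dependence. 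Applying this at $r=R/2$, together with $\mathrm{vol}(D_{1}\cap B(q,R/2))\leq\alpha\,\mathrm{vol}(D_{1})\leq\alpha\,\mathrm{vol}(B(q,R))/2$, shows that $D_{2}\cap B(q,R/2)$ has volume at least $A'\mathrm{vol}(B(q,R))-E'$ for suitable $A',E'$ of the same type.

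Next I would estimate the tube $T_{s}(\Gamma):=\{x\in B(q,R):d_{M}(x,\Gamma)<s\}$ from above via the coarea formula for the distance to $\Gamma$. Running the Riccati comparison for Jacobi fields along geodesics normal to $\Gamma$, exactly in the spirit of Lemmas \ref{lemma2-2} and \ref{lemma2-4} but with the radial direction taken relative to $\Gamma$, gives
\begin{eqnarray*}
\mathrm{vol}(T_{s}(\Gamma))\leq C_{1}(n,\lambda(t),R)\,s\,\mathrm{vol}(\Gamma)+C_{2}(n,p,\lambda(t),K,R)\,s,\qquad 0<s\leq R/4.
\end{eqnarray*}
For the matching lower bound on $\mathrm{vol}(T_{s_0}(\Gamma))$, I use the convexity of $B(q,R)$: for every $x\in D_{1}^{\mathrm{out}}$ and $y\in D_{2}\cap B(q,R/2)$ the unique minimizing segment from $x$ to $y$ stays in $B(q,R)$ and must cross $\Gamma$, so its midpoint lies in $T_{d(x,y)/2}(\Gamma)$. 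Performing a Fubini change of variables through the midpoint map $m:D_{1}^{\mathrm{out}}\times(D_{2}\cap B(q,R/2))\to B(q,R)$, with its Jacobian again controlled by Theorem \ref{theorem2-1} up to an additive $K$-error, one arrives at
\begin{eqnarray*}
\mathrm{vol}(T_{s_{0}}(\Gamma))\geq c_{15}(n,\lambda(t),R)\,\mathrm{vol}(D_{1})-c_{16}(n,p,\lambda(t),K,R)
\end{eqnarray*}
for a fixed $s_{0}=s_{0}(n,\lambda(t),R)\in(0,R/4]$. Combining the two tube estimates at $s=s_{0}$ produces the claimed inequality with $c_{13}:=c_{15}/(C_{1}s_{0})$ and $c_{14}:=(C_{2}s_{0}+c_{16})/(C_{1}s_{0})$.

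The main obstacle I expect is the lower tube estimate: while the midpoint--Fubini trick is classical under a pointwise Ricci lower bound, retaining an \emph{additive} (rather than multiplicative) error in $K$ demands a careful propagation of the $K^{\frac{1}{2}}$-error from Theorem \ref{theorem2-1} through the Jacobian of $m$, which in turn rests on a second pass through the Jacobi-field comparison behind Lemma \ref{lemma2-2}. Ensuring that $c_{13}$ depends only on $n,\lambda(t),R$, so that as $K\to 0$ the inequality degenerates to the classical pointwise-Ricci estimates of Buser \cite{pb} and Chavel \cite{ic2}, is the delicate point.
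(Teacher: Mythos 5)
Your route is genuinely different from the paper's, but it has two gaps that I do not see how to close. The first is the upper tube estimate $\mathrm{vol}(T_{s}(\Gamma))\leq C_{1}(n,\lambda(t),R)\,s\,\mathrm{vol}(\Gamma)+C_{2}(n,p,\lambda(t),K,R)\,s$. A Heintze--Karcher-type bound for a normal tube around a hypersurface (this is exactly Theorem \ref{theorem3-13} of the paper) unavoidably carries a term of the form $\int_{\Gamma}|H|^{2p-1}dv_{\Gamma}$, and for an \emph{arbitrary} dividing hypersurface $\Gamma$ the mean curvature is not controlled by anything in the hypotheses. The inequality as stated is in fact false: take $\Gamma$ to be a round sphere of tiny radius $\varepsilon$ inside a Euclidean ball, so $\mathrm{vol}(\Gamma)\sim\varepsilon^{n-1}$ while $\mathrm{vol}(T_{s}(\Gamma))\sim s^{n}$ for $s\gg\varepsilon$; no bound linear in $s\,\mathrm{vol}(\Gamma)$ plus a curvature error can hold uniformly. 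The second gap is the lower tube estimate. The observation that the midpoint of a minimizing segment from $x\in D_{1}^{\mathrm{out}}$ to $y\in D_{2}\cap B(q,R/2)$ lies in $T_{d(x,y)/2}(\Gamma)$ only places it in a tube of radius up to $\mathrm{diam}(B(q,R))/2$, which can be comparable to $R$ and hence swallow all of $B(q,R)$; it does not populate a tube of a \emph{fixed small} radius $s_{0}\leq R/4$. To repair this one needs the full segment inequality (integrating the characteristic function of a thin neighborhood of $\Gamma$ along each segment, with a pointwise Jacobian comparison for the maps $(x,y)\mapsto\gamma_{xy}(t)$), and Theorem \ref{theorem2-1} does not supply that: it controls volumes of geodesic balls centered at $q$, not the Jacobians of midpoint or segment maps based at moving points. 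Establishing a segment inequality under integral \emph{radial} curvature bounds is precisely the hard step you defer, and it is not a routine consequence of Lemma \ref{lemma2-2}.

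For comparison, the paper avoids tubes around $\Gamma$ entirely and follows Chavel's radial-projection argument. Fixing $t\in(0,R/2)$, each $x\in D_{1}\setminus\overline{B(q,R/2)}$ is classified by where the radial geodesic from $x$ to $q$ first meets $\Gamma$: the set $A_{2}$ of points whose first crossing lies in $\overline{B(q,t)}$ is controlled, via the monotonicity of $J^{n-1}/f^{n-1}$ up to the $\psi$-error of Lemmas \ref{lemma2-2} and \ref{lemma2-4}, by the volume of its radial shadow $A_{3}\subset D_{1}\cap B(q,R/2)$ (hence by $\alpha\,\mathrm{vol}(D_{1})$) plus an additive $K$-term; the complementary set $A_{1}$ is radially projected onto $\Gamma$ itself, giving $\mathrm{vol}(A_{1})\leq(\mbox{model ratio})\cdot\mathrm{vol}(\Gamma)+(\mbox{$K$-error})$. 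Combining these with $(1-\alpha)\mathrm{vol}(D_{1})\leq\mathrm{vol}(A_{1})+\mathrm{vol}(A_{2})$ yields the theorem with only the radial comparison machinery already established. If you want to keep your tube-based strategy, you would need both a mean-curvature-free substitute for the upper tube bound and a proof of the segment inequality in the integral radial curvature setting; as written, neither is available.
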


\begin{proof}
we use a similar method to that in the proof of \cite[Theorem
1.1]{ic2}.

Without loss of generality, we assume that (\ref{5-3-assume}) was
satisfied with $i=1$, that is, $$\mathrm{vol}\left(D_{1}\cap
B(q,\frac{R}{2})\right)\leq\alpha\cdot\mathrm{vol}(D_1).$$ Fix
$t\in\left(0,\frac{R}{2}\right)$. For $x\in D_{1}\setminus Cut(q)$,
with, as before, $Cut(q)$ the cut locus of $q$, let
$\mathcal{C}_{xq}$ be the geodesic segment emanating from $x$ and
joining $q$, and let $x^{\ast}\in\mathcal{C}_{xq}$ be the first
point where $\mathcal{C}_{xq}$ intersects $\Gamma$. Clearly, if
$\mathcal{C}_{xq}\subset D_{1}$, then $x^{\ast}=q$. Define
\begin{eqnarray*}
&& A_{1}=\left\{x\in
D_{1}\backslash\left(Cut(q)\cup\overline{B(q,\frac{R}{2})}\right)\Bigg{|}x^{\ast}\in\left(\overline{B(q,t)}\right)^{c}\right\},\\
&&A_{2}=\left\{x\in
D_{1}\backslash\left(Cut(q)\cup\overline{B(q,\frac{R}{2})}\right)\Bigg{|}x^{\ast}\in\overline{B(q,t)}\right\},\\
&& A_{3}=\left(B(q,\frac{R}{2})\cap
B(q,t)\right)\cap\bigcup\limits_{x\in
A_2}\{\exp_{q}(\tau\theta)|t\leq\tau\leq s, \theta\in S_{q}^{n-1},
\|\theta\|=1,~x=\exp_{q}(s\theta)\},
\end{eqnarray*}
where $\left(\overline{B(q,t)}\right)^{c}$ is the complementary set
of $\overline{B(q,t)}$. Clearly, $A_1$, $A_2$, $A_3$ are subsets of
$D_{1}$. For a subset $S\subset T_{q}M$ of the tangent space
$T_{q}M$, by H\"{o}lder's inequality and Lemma \ref{lemma2-4}, we
have
 \begin{eqnarray} \label{5-3-1}
\int_{S}\int_{0}^{R}\psi(t,\xi)J^{n-1}(t,\xi)dtd\sigma&\leq&
\left(\int_{\mathbb{S}^{n-1}}\int_{0}^{R}\psi^{2p}J^{n-1}dtd\sigma\right)^{\frac{1}{2p}}\left(\int_{\mathbb{S}^{n-1}}\int_{0}^{R}J^{n-1}dtd\sigma\right)^{1-\frac{1}{2p}}
\nonumber\\
&\leq&
\left(c_{2}(n,p)\int_{\mathbb{S}^{n-1}}\int_{0}^{R}\rho^{p}J^{n-1}dtd\sigma\right)^{\frac{1}{2p}}\left(\mathrm{vol}(B(q,R))\right)^{1-\frac{1}{2p}}
\nonumber\\
&=&c_{2}^{\frac{1}{2p}}\cdot\left(\mathrm{vol}(B(q,R))\right)^{1-\frac{1}{2p}}\cdot\left(k_{-}(p,q,\lambda(t),R)\right)^{\frac{1}{2}},
 \end{eqnarray}
where $\psi(t,\xi)$ is defined as in Lemma \ref{lemma2-2},
$c_{2}(n,p)=\left(\frac{1}{n-1}-\frac{1}{2p-1}\right)^{-p}$ is the
constant given in Lemma \ref{lemma2-4}, and
$k_{-}(p,q,\lambda(t),R)$ is the Type-I integral radial Ricci
curvature w.r.t. $q$.

On the other hand, since
\begin{eqnarray*}
\frac{d}{ds}\left(\frac{J^{n-1}(s,\xi)}{f^{n-1}(s)}\right)\leq\psi(s,\xi)\cdot\frac{J^{n-1}(s,\xi)}{f^{n-1}(s)},
\end{eqnarray*}
with $f$ the solution to the ODE (\ref{ODE}), integrating from
$r_{1}\leq s$ to $s$ yields
\begin{eqnarray}  \label{5-3-2}
\frac{J^{n-1}(s,\xi)}{f^{n-1}(s)}-\frac{J^{n-1}(r_1,\xi)}{f^{n-1}(r_1)}\leq
\int_{r_1}^{s}\psi(t,\xi)\frac{J^{n-1}(t,\xi)}{f^{n-1}(t)}dt\leq\frac{1}{f^{n-1}(r_1)}\int_{r_1}^{s}\psi(t,\xi)J^{n-1}(t,\xi)dt,
\end{eqnarray}
where the last inequality holds because $\lambda(t)\leq0$ for $0\leq
t\leq s\leq R$, leading to the fact that $f(t)$ is increasing on
$[0,R)$. Hence,  from the above inequality, one can obtain, for
$r_{2}> r_{1}$, that
\begin{eqnarray*}
\int_{r_1}^{r_2}J^{n-1}(s,\xi)ds&\leq&\int_{r_1}^{r_2}\left(\frac{J^{n-1}(r_1,\xi)}{f^{n-1}(r_1)}+\frac{1}{f^{n-1}(r_1)}\int_{r_1}^{s}\psi(t,\xi)J^{n-1}(t,\xi)dt\right)f^{n-1}(s)ds\\
&\leq&\left(\frac{J^{n-1}(r_1,\xi)}{f^{n-1}(r_1)}+\frac{1}{f^{n-1}(r_1)}\int_{r_1}^{r_2}\psi(s,\xi)J^{n-1}(s,\xi)ds\right)\int_{r_1}^{r_2}f^{n-1}(s)ds\\
&\leq&\frac{\mathrm{vol}(\mathcal{B}(q^{-},r_{2}))-\mathrm{vol}(\mathcal{B}(q^{-},r_{1}))}{f^{n-1}(r_1)}\left(J^{n-1}(r_1,\xi)+\int_{r_1}^{r_2}\psi(s,\xi)J^{n-1}(s,\xi)ds\right),
\end{eqnarray*}
which implies
\begin{eqnarray} \label{5-3-3}
\frac{\int_{r_1}^{r_2}J^{n-1}(s,\xi)ds}{\mathrm{vol}(\mathcal{B}(q^{-},r_{2}))-\mathrm{vol}(\mathcal{B}(q^{-},r_{1}))}\leq
\frac{J^{n-1}(r_1,\xi)}{f^{n-1}(r_1)}+\frac{1}{f^{n-1}(r_1)}\int_{r_1}^{r_2}\psi(s,\xi)J^{n-1}(s,\xi)ds,
\end{eqnarray}
where, as before, $\mathcal{B}(q^{-},\cdot)$ denotes the geodesic
ball, with center $q^{-}$ and the prescribed radius, on the
spherically symmetric manifold
$M^{-}:=[0,\infty)\times_{f}\mathbb{S}^{n-1}$ with the base point
$q^{-}$. Therefore, for $0\leq r_0\leq l\leq r_{1}\leq r_{2}\leq R$,
by (\ref{5-3-2}) and (\ref{5-3-3}), one has
\begin{eqnarray*}
\frac{\int_{r_1}^{r_2}J^{n-1}(s,\xi)ds}{\mathrm{vol}(\mathcal{B}(q^{-},r_{2}))-\mathrm{vol}(\mathcal{B}(q^{-},r_{1}))}&\leq&
\frac{J^{n-1}(l,\xi)}{f^{n-1}(l)}+\frac{1}{f^{n-1}(l)}\int_{l}^{r_1}\psi(s,\xi)J^{n-1}(s,\xi)ds\\
&& \qquad
+\frac{1}{f^{n-1}(r_1)}\int_{r_1}^{r_2}\psi(s,\xi)J^{n-1}(s,\xi)ds\\
&\leq&
\frac{J^{n-1}(l,\xi)}{f^{n-1}(l)}+\frac{2}{f^{n-1}(l)}\int_{0}^{R}\psi(s,\xi)J^{n-1}(s,\xi)ds,
\end{eqnarray*}
which, together with (\ref{5-3-3}), implies
\begin{eqnarray} \label{5-3-4}
\frac{\int_{r_1}^{r_2}J^{n-1}(s,\xi)ds}{\mathrm{vol}(\mathcal{B}(q^{-},r_{2}))-\mathrm{vol}(\mathcal{B}(q^{-},r_{1}))}\leq
\frac{\int_{r_0}^{r_1}J^{n-1}(s,\xi)ds}{\mathrm{vol}(\mathcal{B}(q^{-},r_{1}))-\mathrm{vol}(\mathcal{B}(q^{-},r_{0}))}\nonumber\\
 \qquad
+\frac{2R}{\mathrm{vol}(\mathcal{B}(q^{-},r_{1}))-\mathrm{vol}(\mathcal{B}(q^{-},r_{0}))}\int_{0}^{R}\psi(s,\xi)J^{n-1}(s,\xi)ds.
\end{eqnarray}
Combining (\ref{5-3-1}) and (\ref{5-3-4}), we can obtain
\begin{eqnarray} \label{5-3-5}
&&\mathrm{vol}(A_2)\leq\frac{\mathrm{vol}(\mathcal{B}(q^{-},R))-\mathrm{vol}(\mathcal{B}(q^{-},R/2))}{\mathrm{vol}(\mathcal{B}(q^{-},R/2))-\mathrm{vol}(\mathcal{B}(q^{-},t))}
\cdot\Bigg{[}\mathrm{vol}(A_3)+2rc_{2}^{\frac{1}{2p}}\nonumber\\
&&\qquad\qquad\qquad
\cdot\left(\mathrm{vol}(B(q,R))\right)^{1-\frac{1}{2p}}\cdot
K^{\frac{1}{2}}\Bigg{]},
\end{eqnarray}
which, together with Theorem \ref{theorem2-1}, implies
\begin{eqnarray*}
&&\mathrm{vol}(A_2)\leq\frac{\mathrm{vol}(\mathcal{B}(q^{-},R))-\mathrm{vol}(\mathcal{B}(q^{-},R/2))}{\mathrm{vol}(\mathcal{B}(q^{-},R/2))-\mathrm{vol}(\mathcal{B}(q^{-},t))}
\cdot\Bigg{[}\mathrm{vol}(A_3)+2rc_{2}^{\frac{1}{2p}}\nonumber\\
&&\qquad\qquad\qquad \cdot\left(1+c(n,p,R)\cdot
K^\frac{1}{2}\right)^{2p-1}\left(\mathcal{B}_{n}(q^{-},R)\right)^{1-\frac{1}{2p}}\cdot
K^{\frac{1}{2}}\Bigg{]},
\end{eqnarray*}
where the constant $c(n,p,R)$ is given by (\ref{EXP-C}). Set
\begin{eqnarray}
&&\alpha_{1}:=\frac{\mathrm{vol}(\mathcal{B}(q^{-},R))-\mathrm{vol}(\mathcal{B}(q^{-},R/2))}{\mathrm{vol}(\mathcal{B}(q^{-},R/2))-\mathrm{vol}(\mathcal{B}(q^{-},t))},\nonumber\\
&&\alpha_{2}:=2c_{2}^{\frac{1}{2p}}\left(1+c(n,p,R)\cdot
K^\frac{1}{2}\right)^{2p-1}\left(\mathcal{B}_{n}(q^{-},R)\right)^{1-\frac{1}{2p}}\cdot
K^{\frac{1}{2}}\cdot\alpha_{1}.
\end{eqnarray}
Hence, one has
 \begin{eqnarray*}
\mathrm{vol}(A_2)\leq\alpha_{1}\mathrm{vol}(A_3)+r\alpha_{2}.
 \end{eqnarray*}
Since
 \begin{eqnarray*}
(1-\alpha)\mathrm{vol}(D_1)\leq\mathrm{vol}\left(D_{1}\setminus
B(q,\frac{R}{2})\right)=\mathrm{vol}(A_1)+\mathrm{vol}(A_2)
 \end{eqnarray*}
and
 \begin{eqnarray*}
\mathrm{vol}(A_3)\leq\mathrm{vol}\left(D_{1}\cap
B(q,\frac{R}{2})\right)\leq\alpha\mathrm{vol}(D_1),
 \end{eqnarray*}
we have
 \begin{eqnarray} \label{5-3-7}
\left(1-\alpha(1+\alpha_{1})\right)\mathrm{vol}(D_{1})\leq\mathrm{vol}(A_1)+\alpha_{2}R.
 \end{eqnarray}
Let $\{\exp_{q}(t\theta)\}\cap
A_{1}=\bigcup\limits_{\delta(\theta)}\left\{\exp_{q}(s\delta_{\theta})|s\in[\beta'_{\delta(\theta)},\gamma_{\delta(\theta)}],\|\delta_{\theta}\|=1\right\}$,
and moreover, set
\begin{eqnarray*}
\beta_{\delta(\theta)}=\left\{
\begin{array}{lll}
\beta'_{\delta(\theta)}, & \quad  \mathrm{if}~\beta'_{\delta(\theta)}>\frac{R}{2},\\
\left\|\exp^{-1}_{q}\left(\exp_{q}(\frac{R}{2}\theta)\right)^{\ast}\right\|,
& \quad \mathrm{if}~\beta'_{\delta(\theta)}=\frac{R}{2}.
\end{array}
\right.
\end{eqnarray*}
Let $\nu$ be the projection to $T_{q}M$ such that
$\nu(\exp_{q}(s\theta))=\theta$, and let $S$ be the subset
$\nu(A_1)\subset T_{q}M$. Using (\ref{5-3-3}) directly, one can
obtain, for $t<\frac{R}{2}\leq\beta_{\delta(\theta)}\leq
s\leq\gamma_{\delta(\theta)}\leq R$, that
\begin{eqnarray} \label{5-3-8}
&&\int_{S}\sum\limits_{\delta(\theta)}\int_{\beta_{\delta(\theta)}}^{\gamma_{\delta(\theta)}}J^{n-1}(s,\xi)dsd\theta\nonumber\\
&& \qquad \leq
\int_{S}\sum\limits_{\delta(\theta)}\left[\mathrm{vol}(\mathcal{B}(q^{-},\gamma_{\delta(\theta)}))-\mathrm{vol}(\mathcal{B}(q^{-},\beta_{\delta(\theta)}))\right]\nonumber\\
&& \qquad \qquad
\cdot\left(\frac{J^{n-1}(\beta_{\delta(\theta)},\xi)}{f^{n-1}(\beta_{\delta(\theta)})}+\frac{1}{f^{n-1}(\beta_{\delta(\theta)})}
\int_{\beta_{\delta(\theta)}}^{\gamma_{\delta(\theta)}}\psi(s,\xi)J^{n-1}(s,\xi)ds\right)d\theta\nonumber\\
&& \qquad \leq
\frac{\mathrm{vol}(\mathcal{B}(q^{-},R))-\mathrm{vol}(\mathcal{B}(q^{-},t))}{f^{n-1}(t)}\cdot\Bigg{[}\int_{S}\sum\limits_{\delta(\theta)}
J^{n-1}(\beta_{\delta(\theta)},\xi)d\theta\nonumber\\
&& \qquad\qquad +
\int_{S}\sum\limits_{\delta(\theta)}\int_{\beta_{\delta(\theta)}}^{\gamma_{\delta(\theta)}}\psi(s,\xi)J^{n-1}(s,\xi)dsd\theta\Bigg{]}.
\end{eqnarray}
On the other hand, similar to (\ref{5-3-1}), by H\"{o}lder's
inequality, Theorem \ref{theorem2-1} and Lemma \ref{lemma2-4}, one
has
\begin{eqnarray*}
&&\int_{S}\sum\limits_{\delta(\theta)}\int_{\beta_{\delta(\theta)}}^{\gamma_{\delta(\theta)}}\psi(s,\xi)J^{n-1}(s,\xi)dsd\theta
\\
&& \qquad  \leq
\left(\int_{S}\sum\limits_{\delta(\theta)}\int_{\beta_{\delta(\theta)}}^{\gamma_{\delta(\theta)}}\psi^{2p}(s,\xi)J^{n-1}(s,\xi)dsd\theta\right)^{\frac{1}{2p}}
 \cdot
\left(\int_{S}\sum\limits_{\delta(\theta)}\int_{\beta_{\delta(\theta)}}^{\gamma_{\delta(\theta)}}J^{n-1}(s,\xi)dsd\theta\right)^{1-\frac{1}{2p}}
\\
&& \qquad  \leq
\left(\int_{S}\int_{0}^{R}\psi^{2p}(s,\xi)J^{n-1}(s,\xi)dsd\theta\right)^{\frac{1}{2p}}
 \cdot
\left(\int_{S}\int_{0}^{R}J^{n-1}(s,\xi)dsd\theta\right)^{1-\frac{1}{2p}}
\\
&& \qquad  \leq
\left(\int_{S}\int_{0}^{R}\psi^{2p}(s,\xi)J^{n-1}(s,\xi)dsd\theta\right)^{\frac{1}{2p}}\cdot\left(\mathrm{vol}(B(q,R))\right)^{1-\frac{1}{2p}}
\\
&& \qquad  \leq
\left(\int_{\mathbb{S}^{n-1}}\int_{0}^{R}\psi^{2p}(s,\xi)J^{n-1}(s,\xi)dsd\theta\right)^{\frac{1}{2p}}
\left(1+c(n,p,R)\cdot
K^\frac{1}{2}\right)^{2p-1}\left(\mathcal{B}_{n}(q^{-},R)\right)^{1-\frac{1}{2p}}\\
&& \qquad  \leq c_{2}^{\frac{1}{2p}}\cdot
K^{\frac{1}{2}}\left(1+c(n,p,R)\cdot
K^\frac{1}{2}\right)^{2p-1}\left(\mathcal{B}_{n}(q^{-},R)\right)^{1-\frac{1}{2p}},
\end{eqnarray*}
which, together with (\ref{5-3-8}), implies
 \begin{eqnarray*}
\mathrm{vol}(A_1)&=&\int_{S}\sum\limits_{\delta(\theta)}\int_{\beta_{\delta(\theta)}}^{\gamma_{\delta(\theta)}}J^{n-1}(s,\xi)dsd\theta\nonumber\\
&\leq&
\frac{\mathrm{vol}(\mathcal{B}(q^{-},R))-\mathrm{vol}(\mathcal{B}(q^{-},t))}{f^{n-1}(t)}\cdot\Bigg{[}\int_{S}\sum\limits_{\delta(\theta)}
J^{n-1}(\beta_{\delta(\theta)},\xi)d\theta\\
&& \qquad\qquad + c_{2}^{\frac{1}{2p}}\cdot
K^{\frac{1}{2}}\left(1+c(n,p,R)\cdot
K^\frac{1}{2}\right)^{2p-1}\left(\mathcal{B}_{n}(q^{-},R)\right)^{1-\frac{1}{2p}}\Bigg{]}\\
&\leq&
\frac{\mathrm{vol}(\mathcal{B}(q^{-},R))-\mathrm{vol}(\mathcal{B}(q^{-},t))}{f^{n-1}(t)}\cdot\Bigg{[}\mathrm{vol}(\Gamma)\\
&& \qquad\qquad + c_{2}^{\frac{1}{2p}}\cdot
K^{\frac{1}{2}}\left(1+c(n,p,R)\cdot
K^\frac{1}{2}\right)^{2p-1}\left(\mathcal{B}_{n}(q^{-},R)\right)^{1-\frac{1}{2p}}\Bigg{]}.
 \end{eqnarray*}
Therefore, together with (\ref{5-3-7}), we have
 \begin{eqnarray} \label{5-3-9}
\mathrm{vol}(\Gamma)&\geq&\frac{f^{n-1}(t)}{\mathrm{vol}(\mathcal{B}(q^{-},r))-\mathrm{vol}(\mathcal{B}(q^{-},t))}\mathrm{vol}(A_{1})
\nonumber\\
&& \qquad - c_{2}^{\frac{1}{2p}}\cdot
K^{\frac{1}{2}}\left(1+c(n,p,R)\cdot
K^\frac{1}{2}\right)^{2p-1}\left(\mathcal{B}_{n}(q^{-},R)\right)^{1-\frac{1}{2p}} \nonumber\\
&\geq&
\frac{f^{n-1}(t)}{\mathrm{vol}(\mathcal{B}(q^{-},R))-\mathrm{vol}(\mathcal{B}(q^{-},t))}\mathrm{vol}(A_{1})-\frac{\alpha_{2}}{2\alpha_{1}}
\nonumber\\
&\geq&
\frac{f^{n-1}(t)}{\mathrm{vol}(\mathcal{B}(q^{-},R))-\mathrm{vol}(\mathcal{B}(q^{-},t))}\left[\left(1-\alpha(1+\alpha_{1})\right)\mathrm{vol}(D_{1})-\alpha_{2}R\right]-\frac{\alpha_{2}}{2\alpha_{1}}.
\qquad
 \end{eqnarray}
Clearly, one can choose
\begin{eqnarray*}
&&c_{13}(n,\lambda(t),R):=\frac{f^{n-1}(t)}{\mathrm{vol}(\mathcal{B}(q^{-},R))-\mathrm{vol}(\mathcal{B}(q^{-},t))}\cdot\left(1-\alpha(1+\alpha_{1})\right),\\
&&c_{14}(n,p,\lambda(t),K,R):=\frac{f^{n-1}(t)}{\mathrm{vol}(\mathcal{B}(q^{-},R))-\mathrm{vol}(\mathcal{B}(q^{-},t))}\cdot\alpha_{2}R+\frac{\alpha_{2}}{2\alpha_{1}},
\end{eqnarray*}
and then the conclusion of Theorem \ref{theorem5-3} follows
directly.
\end{proof}

\begin{remark}
\rm{It is easy to check that
$c_{14}(n,p,\lambda(t),K,R)\rightarrow0$ as $K\rightarrow0$. }
\end{remark}

\section{Open problems}
\renewcommand{\thesection}{\arabic{section}}
\renewcommand{\theequation}{\thesection.\arabic{equation}}
\setcounter{equation}{0} \setcounter{maintheorem}{0}

The fundamental solution of the heat equation is called \emph{the
heat kernel}, which can be bounded from both above and below in
terms of curvatures. More precisely, Debiard, Gaveau and Mazet
\cite{dge} gave an upper bound for the heat kernel on geodesic
balls, within the cut locus of the center, of manifolds with
sectional curvature bounded from above by some constant, while
Cheeger and Yau \cite{cy1} showed a lower bound estimate for the
heat kernel on geodesic balls of manifolds with Ricci curvature
bounded from below by some constant. Several years ago, Mao
\cite[Theorem 6.6]{m2} successfully extended these estimates to a
more general and interesting setting. In fact, he proved:\footnote{
We would like to mention one thing here, that is, the inequalities
(6.1) and (6.2) in \cite[Theorem 6.6]{m2} should have opposite
directions, i.e., they should be separately changed into
(\ref{6-1-1}), (\ref{6-1-2}) of Theorem \ref{theorem6-1} here.
However, the proof of \cite[Theorem 6.6]{m2} is almost correct
except that ``$\geq$" in (6.4), ``$\leq$" in (6.6) therein should
change directions.}

\begin{theorem} \label{theorem6-1}
Given a complete Riemannian $n$-manifold $M$, $n\geq2$, we can
obtain:

(1) if $M$ has a radial Ricci curvature lower bound
$(n-1)\lambda(t)$ w.r.t. some point $q\in M$, then, for
$r_{0}<\min\{\mathrm{inj}(q),l\}$, the inequality
\begin{eqnarray} \label{6-1-1}
H(q,y,t)\geq H_{-}\left(d_{M^{-}}\left(q^{-},z\right),t\right),
\end{eqnarray}
holds for all $(y,t)\in B(q,r_{0})\times(0,\infty)$ with
  $d_{M}(q,y)=d_{M^{-}}(q^{-},z)$ for any $z\in M^{-}$, where
  $\mathrm{inj}(q)$ is defined by (\ref{inj-R}),
  $M^{-}=:[0,l)\times_{f}\mathbb{S}^{n-1}$ with the base point $q^{-}$ and $f$ determined by
  (\ref{ODE}), $d_{M^{-}}$ and $d_{M}$ denote the distance functions
  on $M^{-}$, $M$ respectively. Moreover, the equality in
  (\ref{6-1-1}) holds at some $(y_{0},t_{0})\in
  B(q,r_{0})\times(0,\infty)$ if and only if $B(q,r_{0})$ is
  isometric to $\mathcal{B}(q^{-},r_{0})$;

(2) if $M$ has a radial sectional curvature upper bound $\lambda(t)$
w.r.t. some point $q\in M$, then, for $r_{0}<\min\{l(q),l\}$, the
inequality
\begin{eqnarray} \label{6-1-2}
H(q,y,t)\leq H_{+}\left(d_{M^{+}}\left(q^{+},z\right),t\right),
\end{eqnarray}
holds for all $(y,t)\in B(q,r_{0})\times(0,\infty)$ with
  $d_{M}(q,y)=d_{M^{+}}(q^{+},z)$ for any $z\in M^{+}$, where
  $l(q)$ is defined by (\ref{key-def1}),
  $M^{+}=:[0,l)\times_{f}\mathbb{S}^{n-1}$ with the base point $q^{+}$ and $f$ determined by
  (\ref{ODE}), $d_{M^{+}}$ and $d_{M}$ denote the distance functions
  on $M^{+}$, $M$ respectively. Moreover, the equality in
  (\ref{6-1-2}) holds at some $(y_{0},t_{0})\in
  B(q,r_{0})\times(0,\infty)$ if and only if $B(q,r_{0})$ is
  isometric to $\mathcal{B}(q^{+},r_{0})$.

(The boundary condition will either be Dirichlet or Neumann.)
\end{theorem}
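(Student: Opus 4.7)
\textbf{Proof strategy for Theorem~\ref{theorem6-1}.}
The plan is a parabolic comparison argument: set $r = d_M(q,\cdot)$ and define the radial pull-back
\begin{equation*}
v_\pm(y,t) := H_\pm\bigl(r(y),\,t\bigr),
\end{equation*}
i.e., transport the model heat kernel to $M$ via the distance function to $q$. I want to show that $v_-$ is a subsolution and $v_+$ is a supersolution of $(\partial_t - \Delta_M)u = 0$ on $B(q,r_0)$, after which the parabolic maximum principle, together with the matching initial data (both $H$ and $v_\pm$ have $\delta_q$ as the $t\downarrow 0$ limit) and matching boundary data on $\partial B(q,r_0)$ (the distance $r\equiv r_0$ there, so the prescribed Dirichlet/Neumann condition transfers directly from the model ball), yields $H \geq v_-$ in (1) and $H \leq v_+$ in (2).

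The central calculation is performed on the smooth part $B(q,r_0)\setminus(\{q\}\cup\mathrm{Cut}(q))$. Since $|\nabla r|=1$, the chain rule gives $\Delta_M v_\pm = \partial_r^2 H_\pm + \partial_r H_\pm\cdot\Delta_M r$, while on $M^\pm$ the radial heat equation reads $\partial_t H_\pm = \partial_r^2 H_\pm + (n-1)\tfrac{f'}{f}\partial_r H_\pm$. Subtracting gives the key identity
\begin{equation*}
(\partial_t - \Delta_M)v_\pm \,=\, \partial_r H_\pm(r,t)\cdot\left((n-1)\frac{f'(r)}{f(r)} - \Delta_M r\right).
\end{equation*}
I then invoke two facts: (a) the radial heat kernel on the model is strictly decreasing in $r$, so $\partial_r H_\pm \leq 0$ (standard, via separation of variables or reflection); and (b) the radial-curvature Laplacian comparison of Freitas--Mao--Salavessa \cite[\S3]{fmi}, which reads $\Delta_M r \leq (n-1)f'/f$ under the radial Ricci lower bound of~(1), and $\Delta_M r \geq (n-1)f'/f$ under the radial sectional upper bound of~(2). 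Combining (a) and (b), $(\partial_t - \Delta_M)v_- \leq 0$ and $(\partial_t - \Delta_M)v_+ \geq 0$ hold pointwise on the smooth part. The rigidity assertion then follows from the strong maximum principle: equality at an interior point $(y_0,t_0)$ forces $H \equiv v_\pm$ on $B(q,r_0)\times(0,t_0]$, hence equality in the Laplacian comparison throughout $B(q,r_0)\setminus\{q\}$; invoking the rigidity half of the comparison (cf.\ \cite[Corollary~3.4]{m3}) identifies $B(q,r_0)$ isometrically with $\mathcal{B}(q^\pm,r_0)$.

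The main obstacle is the cut locus. Part~(1) is safe because $r_0 < \mathrm{inj}(q)$ keeps $r$ smooth on $B(q,r_0)\setminus\{q\}$, so the above pointwise inequality is all that is needed. Part~(2), however, allows $r_0 < \ell(q)$, so $\mathrm{Cut}(q)$ may meet $B(q,r_0)$; there $r$ is only Lipschitz and semi-concave, and the distributional $\Delta_M r$ picks up a negative singular measure, while $\partial_r H_+ \leq 0$ converts this into an unfavorable \emph{positive} singular contribution to $-\Delta_M v_+$, which threatens the supersolution property. The resolution is a Calabi-type upper-barrier construction: for each cut point, approximate $r$ locally from above by smooth $r_\varepsilon \geq r$ with $\Delta_M r_\varepsilon \geq (n-1)f'/f$ (obtained by perturbing one of the minimising geodesics), form the smooth supersolution $v_{+,\varepsilon} := H_+(r_\varepsilon,t)$, apply the maximum principle to $H - v_{+,\varepsilon}$ on $B(q,r_0)$, and let $\varepsilon\downarrow 0$ using continuity of $H_+$ in its first argument. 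This barrier reduction is the only technical point where care is required; once it is in place, both halves of Theorem~\ref{theorem6-1} follow from the identity above.
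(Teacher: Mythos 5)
First, a point of reference: this paper does not actually prove Theorem \ref{theorem6-1} — it is quoted verbatim from \cite[Theorem 6.6]{m2} in the open-problems section, with a footnote correcting the directions of the inequalities there. So there is no in-paper proof to compare against; your transplantation scheme (pull back the model kernel by $r=d_M(q,\cdot)$, show it is a sub/supersolution via the radial Laplacian comparison and the monotonicity $\partial_r H_\pm\leq 0$, then compare with $H$) is exactly the Cheeger--Yau / Debiard--Gaveau--Mazet strategy that \cite{m2} follows, and your key identity $(\partial_t-\Delta_M)v_\pm=\partial_rH_\pm\cdot\bigl((n-1)f'/f-\Delta_M r\bigr)$ together with the signs in (a), (b) reproduces the corrected inequalities \eqref{6-1-1}, \eqref{6-1-2}.

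The one step in your write-up that would fail as stated is the passage from "sub/supersolution" to the kernel inequality via "the parabolic maximum principle, together with the matching initial data." Both $H(q,\cdot,t)$ and $v_\pm(\cdot,t)$ converge to $\delta_q$ as $t\downarrow 0$, but this gives no control on the \emph{sign} of $H-v_\pm$ for small $t$: two families converging to the same delta measure need not be pointwise comparable near $t=0$, and without a signed inequality on the parabolic bottom the maximum principle has nothing to propagate. The correct device — and the one used in \cite{cy1} — is Duhamel's principle: for fixed $(y,t)$ set $F(s)=\int_M H(y,z,t-s)\,v_\pm(z,s)\,dz$, note $F(s)\to H(q,y,t)$ as $s\downarrow 0$ and $F(s)\to v_\pm(y,t)$ as $s\uparrow t$ (this is where the delta initial data is legitimately used), and compute $F'(s)=\int_M H(y,z,t-s)\,(\partial_s-\Delta)v_\pm(z,s)\,dz$ after Green's identity (boundary terms vanish for either Dirichlet or Neumann data). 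Positivity of $H$ then converts the pointwise sign of $(\partial_s-\Delta)v_\pm$ into the desired inequality, with no comparison at $t=0$ ever needed. Your cut-locus discussion in part (2) is pointing in the right direction but should also be routed through this integral identity (or a viscosity-type comparison): Calabi's construction gives \emph{upper} support functions $r_\varepsilon\geq r$ for the distance, which become \emph{lower} support functions $H_+(r_\varepsilon,t)\leq v_+$ only because $\partial_rH_+\leq0$, and the Hessian (not Ricci) comparison applied from the shifted base point $\gamma(\varepsilon)$ is what yields $\Delta r_\varepsilon\geq(n-1)f'(r_\varepsilon-\varepsilon)/f(r_\varepsilon-\varepsilon)$ up to an $O(\varepsilon)$ error; these details need to be written out before the $\varepsilon\downarrow0$ limit is taken inside the Duhamel integral. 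With those two repairs the argument is complete; the rigidity discussion via the equality case of the Laplacian comparison (cf. \cite{fmi,m3}) is fine.
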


As shown by \cite[Theorem 6.8]{m2},\footnote{ In fact, one can
easily find that (6.9) in \cite[Theorem 6.8]{m2} is a direct
consequence of (\ref{6-1-1}) here by choosing $y=q$. This implies
that although we have made minor typos in (6.1) and (6.2) of
\cite[Theorem 6.6]{m2}, but in the exhibition of the application of
Mao's heat kernel comparisons, correct forms have been used. Because
of this reason, after the formal publication of \cite{m2}, we did
not send an \emph{erratum} to the managing editor. We believe that
readers can easily find those minor typos by themselves.

 } a direct and important application of Theorem \ref{theorem6-1} is
that it can give an alternative proof of the Cheng-type eigenvalue
comparison conclusions \cite[Theorems 3.6 and 4.4]{fmi}.

The heat kernel upper bound estimate in \cite{dge} has been extended
to integral Ricci curvature by Gallot \cite[Theorem 6]{sg}, and the
heat kernel lower bound estimate in \cite{cy1} has been extended to
integral Ricci curvature by Dai and Wei \cite[Theorem 1.1]{dw1}. The
key point of those two extensions is that the error term can be
controlled by the integral Ricci curvature.

Inspired by the above facts, one might ask the following question.

\vspace{3mm}

\textbf{Problem 2}. \emph{Could Theorem \ref{theorem6-1} be extended
to the case of integral radial curvatures? Could we get heat kernel
estimates using the bounded integral radial curvatures assumption?}

\vspace{3mm}

Another question can also be issued naturally, that is,

\vspace{3mm}

\textbf{Problem 3}. \emph{Except conclusions shown in this paper,
what else can be extended to the setting of integral radial
curvatures? }

\vspace {5mm}

\section*{Acknowledgments}
\renewcommand{\thesection}{\arabic{section}}
\renewcommand{\theequation}{\thesection.\arabic{equation}}
\setcounter{equation}{0} \setcounter{maintheorem}{0}

This work was partially supported by the NSF of China (Grant No.
11401131), China Scholarship Council, the Fok Ying-Tung Education
Foundation (China), and Key Laboratory of Applied Mathematics of
Hubei Province (Hubei University). The author wants to thank the
Department of Mathematics, IST, University of Lisbon for its
hospitality during his visit from September 2018 to September 2019.

 \end{document}